\newcommand\cf{\textit{cf.~}}
\newcommand\ie{\textit{i.e.,~}}
\newcommand\eg{\textit{e.g.~}}
\newcommand\disp{\displaystyle}
\newcommand{\ep}{{\epsilon}}
\newcommand\Ep{\mathcal{E}}
\newcommand\codim{\operatorname{codim}}
\newcommand\al{\alpha}
\newcommand\la{\lambda}
\newcommand\bigb{\ \big|\ }
\newcommand{\bbC}{{\mathbb{C}}}
\newcommand{\bbR}{{\mathbb R}}
\newcommand{\bbZ}{{\mathbb Z}}
\newcommand{\oX}{{\overline{X}}}
\newcommand{\dpfr}{\displaystyle\frac{1}{2} }
\newcommand\one{1\!\!1}
\newcommand{\fk}{\mathfrak}
\newcommand{\ovl}{\overline}
\newcommand{\bb}{\mathbb}
\newcommand{\Ind}{{\mathrm{Ind}}}
\newcommand{\Hom}{{\mathrm{Hom}}}
\newtheorem{definition}[subsection]{Definition}
\newtheorem{lemma}[subsection]{Lemma}
\newtheorem{prop}[subsection]{Proposition}
\newtheorem{theorem}[subsection]{Theorem}
\newtheorem{cor}[subsection]{Corollary}
\newtheorem{remark}[subsection]{Remark}
\newcommand{\calO}{{\mathcal{O}}}
\newcommand{\ccO}{{\overline{\mathcal{O}}}}
\newcommand{\calU}{{\mathcal{U}}}
\newcommand\clrr{\color{red} }
\newcommand{\tu}{\widetilde}
\newcommand{\wtu}{\widetilde}
\newcommand{\wti}{\widetilde}
\newcommand{\wht}{\widehat}
\newcommand\ad{\operatorname{ad}}
\numberwithin{equation}{subsection}
\begin{document}

\title{Representations associated to small nilpotent orbits for complex Spin Groups}

\begin{abstract}
This paper 
provides a comparison between the
$K$-structure of unipotent representations and regular sections of
bundles on nilpotent orbits for complex  groups of type $D$. Precisely,
let $ G_ 0 =Spin(2n,\bbC)$ be the Spin complex group 
viewed as a real group, and $K\cong G_0$ be the complexification of the maximal compact subgroup of $G_0$.
 We compute 
$K$-spectra of the regular functions on some small nilpotent orbits $\calO$ transforming according to characters
$\psi$ of $C_{ K}(\calO)$ trivial on the connected component of the
identity $C_{ K}(\calO)^0$. We then match them with the
${K}$-types of the genuine (\ie representations which do not factor
to $SO(2n,\bb C)$) unipotent  representations attached to $\calO$.  
 
 \end{abstract}

\author{Dan Barbasch}
      \address[D. Barbasch]{Department of Mathematics\\
               Cornell University\\Ithaca, NY 14850, U.S.A.}
        \email{barbasch@math.cornell.edu}
\thanks{D. Barbasch was supported by an  NSA grant}

\author{Wan-Yu Tsai}
\address[Wan-Yu Tsai]{Institute of Mathematics, Academia Sinica, 6F, Astronomy-Mathematics Building, No. 1, Sec. 4, Roosevelt Road, Taipei 10617, TAIWAN}
\email{wytsai@math.sinica.edu.tw}

\maketitle

\section{Introduction}

Let $G_0\subset G$ be the real points of a complex linear reductive algebraic
group $G$ with Lie algebra $\fk g_0$ and 
maximal compact subgroup $K_0$. Let $\fk g_0=\fk k_0+\fk s_0$
be the Cartan decomposition, and $\fk g=\fk k+\fk s$ be the
complexification. Let $K$ be the complexification of $K_0.$

  \begin{definition}
Let $\calO:= K\cdot e\subset \fk s$. We say that an irreducible
admissible representation $\Xi$ is associated to $\calO,$ if $\calO$
occurs with nonzero multiplicity in the associated cycle in the sense
of \cite{V2}.

An irreducible module $\Xi$ of $G_0$ is called unipotent
associated to a nilpotent orbit $\calO\subset \fk s$ and
infinitesimal character $\la_{\calO}$, if it satisfies
\begin{description}
\item[1] It is associated to $\calO$ and its annihilator
  $Ann_{U(\fk g)}\Xi$ is the unique maximal primitive ideal with
  infinitesimal character $\la_{\calO}$,
\item[2] $\Xi$ is unitary.
\end{description}
Denote by $\calU _{G_0}(\calO,\la_{\calO})$  the set of unipotent
representations of $G_0$ associated to $\calO$ and $\la_{\calO}$.
  \end{definition}

\bigskip
{Let $C_K(\calO):= C_K(e)$ denote the centralizer of $e$ in $K$, and 
let $A_K(\calO):=C_K(\calO)/C_K(\calO)^0$ be the component group.} 
Assume that $G_0$ is connected, and a complex
group viewed as a real Lie group. In this case $G\cong G_0\times G_0,$
and  $K\cong G_0$ as complex groups. Furthermore $\fk s\cong\fk g_0$
as complex vector spaces, and the action of $K$ is the adjoint action.  
In this case it is conjectured that there exists an infinitesimal character
$\la_{\calO}$ such that in addition,
\begin{description}
\item[3] There is a 1-1 correspondence $\psi\in
 \wht{ A_K(\calO)}\longleftrightarrow \Xi(\calO,\psi)\in
  \calU_{G_0}(\calO,\la_{\calO})$ satisfying the additional condition 
$$
\Xi(\calO,\psi)\bigb_{K}\cong R(\calO,\psi),
$$
\end{description}
where
\begin{equation}\label{def-reg-fun}
\begin{aligned}
R(\calO, \psi) &= \Ind_{C_{K}(e)} ^{K}  (\psi) \\
&= \{f: K\to V_{\psi}  \mid f(gx) =\psi(x) f(g) \ \forall g\in K, \ x\in C_K (e)\}
\end{aligned}
\end{equation}
is the ring of regular functions on $\calO$ transforming according to
$\psi$. Therefore, $R(\calO,\psi)$ carries a $K$-representation.

Conjectural parameters $\la_\calO$ satisfying the conditions above
are studied in \cite{B}, along with results establishing the validity of 
this conjecture for large classes of nilpotent orbits in the classical
complex groups. Such parameters $\la_\calO$ are available for the
exceptional groups as well, \cite{B} for $F_4$, and to appear elsewhere
for type $E.$  

\medskip
{
This conjecture cannot be valid for all nilpotent orbits in the case of
real groups; the intersection of a complex nilpotent orbit with $\fk
s$ consists of several components.  $R(\calO,\psi)$ can be
the same for different components, whereas  the representations with
associated variety containing a given component have drastically different
$K$-structures.  Examples can be found in \cite{V1}. As explained in
\cite{V1} Chapter 7 and \cite{V2} Theorem 4.11, 
if the codimension of the orbit $\calO$ is $\ge 2,$ then
$\Xi\mid_K=R(\calO,\phi)-Y$  with $\phi$ an algebraic representation,
and $Y$ an $S(\fk g/\fk k)$-module  supported on orbits of strictly
smaller dimension. The orbits $\calO$ under consideration in this
paper have codimension $\ge 2.$ 
Even when $\codim\calO \ge 2,$ (\eg the case of the minimal orbit in
certain real forms of type $D,$) many examples are known where there
are no representations with associated variety $\calO$ or any real
form of its complexification. }

\bigskip
In this paper we investigate this conjecture for \textit{small} orbits
in the complex case by different techniques than in \cite{B}; paper
\cite{BTs} investigates the analogue for the real $Spin$ groups. 
For the condition of \textit{small} we require that 
$$
[\mu : R(\calO, \psi)]\le c_{\calO}
$$ 
\ie that the multiplicity of any $\mu\in \widehat{K}$ be uniformly
bounded. This puts a restriction on $\dim\calO$:

\begin{equation}\label{eq:dim-cond}
\dim \calO \leq \text{rank}(\fk k) + |\Delta ^+(\fk k,\fk t)|,
\end{equation}
where $\fk t\subset \fk k$ is a Cartan subalgebra, and $\Delta^+(\fk
k,\fk t)$ is a positive system. 
The reason for this restriction is as follows. Let $(\Pi,X)$ be an
admissible representation of $G_0$, and  $\mu$ be the highest weight of a
representation $(\pi,V)\in \wht{K}$ which is dominant for
$\Delta^+(\fk k,\fk t)$. Assume that $\dim\Hom_K[\pi,\Pi]\le C$, and
$\Pi$ has associated variety {\cf \cite{V2})}. Then 
$$
\dim\{ v\ :\ v\in X \text{ belongs to an isotypic component with
} ||\mu||\le t\}\le Ct^{|\Delta^+(\fk k,\fk t)|+\dim \fk t}.
$$
The dimension of $(\pi,V)$ grows like $t^{|\Delta^+(\fk k,\fk t) |}$, the
number of representations with highest weight $||\mu||\le t$ 
grows like $t^{\dim\fk t},$ and the
multiplicities are assumed uniformly bounded. On the other hand, considerations
involving primitive ideals imply that the dimension of this set grows
like $t^{\dim   G\cdot e/2}$ with $e\in\calO,$ and half the
dimension of (the complex orbit) $G\cdot e$ is  the
dimension of the ($K$-orbit) $K\cdot e\in\fk s.$ { In the
  case of type $D,$ condition (\ref{eq:dim-cond}) coincides with being
  spherical, see \cite{P}. Since we only deal with characters of
  $C_{ K}(\calO)$, multiplicity $\le 1$ is guaranteed}. 
   
   In the case of the complex groups of type $D_n$, we 
consider $G_0=Spin(2n,\bbC)$ viewed as a real group, and hence $K\cong G_0$ is the complexification of the maximal compact subgroup $K_0=  Spin(2n)$ of $G$.
 In Section 2 we list all small
nilpotent orbits satisfying (\ref{eq:dim-cond}) and describe the (component groups) of their centralizers.
In Section 3, we compute $R(\calO,\psi)$ for each $\calO$ in \ref{ss:orbits} and $\psi\in \widehat{A_{ K}(\calO)}$. In Section 4
we associate to each $\calO$ an infinitesimal character $\la_{\calO}$ by \cite{B}.
 The fact is that $\calO$ is the minimal orbit which can be the associated variety of 
a $(\fk g, K)$-module with infinitesimal character $(\la_L, \la_R)$, with  $\la_L$ and $\la_R$ both conjugate to $\la_{\calO}$. 
We make a complete list of irreducible modules $\oX(\la_L,\la_R)$ (in terms of Langlands  classification) which are attached to $\calO.$
Then we match the $ K$-structure of these representations with $R(\calO,\psi)$.
This demonstrates the conjecture we state in the beginning of the introduction. The following theorem summarizes this.
\begin{theorem}
With notation as above, view $ G_0={Spin}(2n,\bbC)$ as a real group. The $ K$-structure 
of each representations in $\calU_{G_0}(\calO,\la_{\calO})$ is calculated explicitly and matches the 
$K$-structure of the $R(\calO,\psi)$ with $\psi \in \widehat{A_{K}(\calO)}$.That is, there is a 1-1 correspondence $\psi\in
   \widehat { A_{{K}}(\calO)}\longleftrightarrow \Xi(\calO,\psi)\in
  \calU_{ G _0 }(\calO,\la_\calO)$ satisfying 
$$
\Xi(\calO,\psi)\mid_{ K}\cong R(\calO,\psi).
$$
\end{theorem}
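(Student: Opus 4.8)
The plan is to verify the theorem by an explicit, case-by-case computation over the finite list of orbits $\calO$ assembled in Section~2 (the spherical nilpotent orbits of $\fk{so}(2n,\bbC)$), establishing for each $\calO$ and each $\psi\in\widehat{A_K(\calO)}$ an equality of two $K$-characters. There are three computational tasks, corresponding to Sections 3 and 4: (i) compute the $K$-spectrum of $R(\calO,\psi)$; (ii) fix the infinitesimal character $\la_\calO$ via \cite{B}, enumerate the irreducible $(\fk g,K)$-modules attached to $\calO$ with infinitesimal character $(\la_\calO,\la_\calO)$, and isolate the unitary ones; (iii) compute the $K$-spectrum of each such unipotent module and match it against the list from (i), reading off the bijection $\psi\leftrightarrow\Xi(\calO,\psi)$.

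For (i), since $\calO\cong K/C_K(e)$ is affine and of codimension $\ge 2$ in its closure, $R(\calO,\psi)$ is the entire ring of $\psi$-covariant regular functions, and Frobenius reciprocity gives $[\mu:R(\calO,\psi)]=\dim\Hom_{C_K(e)}(V_\mu|_{C_K(e)},\psi)$ for $\mu\in\widehat K$. I would evaluate this by presenting each small $\calO$ concretely inside $\fk s\cong\fk{so}(2n,\bbC)$ — as the smooth locus of a rank-type variety, or through an oscillator/Howe-style model — so that $C_K(e)$ and its embedding in $K=\Spin(2n,\bbC)$ are explicit, and then invoking the classical branching rules for $\Spin$ together with the very small character group $A_K(\calO)$ (of the form $(\bbZ/2)^k$ with $k\le 2$). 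The multiplicity-one remark of the introduction guarantees each $[\mu:R(\calO,\psi)]\le 1$, so the answer is a set of dominant weights.

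For (ii), with $\la_\calO$ given by \cite{B}, the irreducibles attached to $\calO$ with infinitesimal character $(\la_\calO,\la_\calO)$ are finite in number and can be written as Langlands quotients $\oX(\la_L,\la_R)$ with $\la_L,\la_R$ conjugate to $\la_\calO$. For each I would obtain the $K$-types from those of the corresponding standard module — known explicitly for a complex group — corrected by the (small) Kazhdan--Lusztig data, which is tractable precisely because the orbits are small. Unitarity then cuts this list down to $\calU_{G_0}(\calO,\la_\calO)$, and one records that $|\calU_{G_0}(\calO,\la_\calO)|=|\widehat{A_K(\calO)}|$; restricting attention to the genuine modules (those not factoring through $SO(2n,\bbC)$) aligns the two sides.

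The principal difficulty is the passage from the abstract characterization of the unipotent modules to their actual $K$-spectra. Vogan's theory only guarantees $\Xi\mid_K=R(\calO,\phi)-Y$ with $\phi$ an algebraic representation of $C_K(e)$ and $Y$ supported on strictly smaller orbits; the work is to show $Y=0$ and $\phi=\psi$. I expect to do this by combining a leading-term estimate — both $\Xi\mid_K$ and $R(\calO,\psi)$ grow polynomially of order $\dim\calO=\dim K/C_K(e)$, which forces the associated-cycle multiplicity to be $1$ and $\phi$ to be a character — with an explicit low-degree match: compute the bottom layer of $K$-types of $\Xi$ from the standard module and its (small) $\mathrm{KL}$-correction, compare with the bottom of $R(\calO,\psi)$ from step (i), and propagate upward to conclude $Y=0$ and $\phi=\psi$. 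The orbits with disconnected $C_K(\calO)$, where several characters $\psi$ lie over the same orbit and the genuine-versus-$SO(2n,\bbC)$ dichotomy is in play, are where this matching is most delicate and must be carried out orbit by orbit.
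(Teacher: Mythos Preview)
Your overall architecture---compute $R(\calO,\psi)$ case by case, compute the $K$-spectrum of each $\Xi\in\calU_{G_0}(\calO,\la_\calO)$ case by case, and compare---is exactly the paper's. The implementations, however, diverge. For step~(i) the paper does not use rank varieties, oscillator models, or classical branching; instead it fixes the Jacobson--Morozov parabolic $\fk p=\fk l+\fk n$ of $h$, splits $\fk n=C_{\fk k}(e)^+\oplus\fk n^\perp$ as a $C_{\fk k}(e)_0$-module, and feeds this into the generalized BGG resolution of $V^*$ to reduce $\Hom_{C_{\fk k}(e)}(V^*,\chi)$ to a Helgason-type question for the symmetric pair $(\fk{gl},\fk{sp})$, with the $\ell(w)=1$ terms eliminated by comparing $\ad h$-eigenvalues. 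For step~(ii) the paper writes the character of $\oX(\la_\calO,-w_i\la_\calO)$ via the Weyl-type formula over $W(\la_\calO)$ (the integral Weyl group, e.g.\ $W(D_p\times D_p)$ in Case~1), restricts to $\wti K$, applies Frobenius reciprocity to obtain an induction from a concrete subgroup $\wti U$ (e.g.\ the image of $\Spin(n)\times\Spin(n)$), and then identifies the inducing data with fine $K$-types of an auxiliary real form such as $SO(n,n)$ or $SO(2k,2n-2k)$---this, together with a bottom-layer reduction, is what produces the explicit highest-weight lists. Finally, what you flag as the ``principal difficulty''---invoking Vogan's $\Xi|_K=R(\calO,\phi)-Y$ and arguing $Y=0$ via growth and low-degree matching---never arises in the paper: because both $R(\calO,\psi)$ and $\Xi|_K$ are computed \emph{completely} as explicit multiplicity-one sums of $V(a_1,\dots,a_n)$ with simple congruence conditions on the $a_i$, the matching is a literal comparison of two lists, with no appeal to associated-cycle theory. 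Your route would work in principle, but is both vaguer on the $R(\calO,\psi)$ side and more roundabout on the matching side than what the paper actually does.
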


For the case $O(2n,\bb C)$  (rather than $Spin(2n,\bb C)$), the
$K$-structure of the representations studied in this paper were
considered earlier in \cite{McG} and \cite{BP1}.

\section{Preliminaries}
\subsection{Nilpotent Orbits} \label{ss:orbits}
The complex nilpotent orbits of type $D_n$ are parametrized by
partitions of $2n$, with even blocks occur with even multiplicities, and with $I, II$ in the \textit{very even} case (see \cite{CM}).
The small nilpotent orbits satisfying (\ref{eq:dim-cond}) are those $\calO$ with $\dim \calO\le n^2$.

\medskip

We list them out as the following four cases:

$$
\begin{aligned}
Case\ 1:\  &n=2p&&\quad \calO=[3\ 2^{n-2} \ 1]&& \dim \calO = n^2 \\     
Case\ 2:\  &n=2p \ \text{ or }  \  2p+1 &&  \quad \calO = [  3\ 2^{2k} \ 1^{2n-4k-3}]  \  \ \footnotesize{0\le k \le p -1} &&\dim \calO =4nk-4k^2+4n-8k-4\\     
Case\ 3:\  &a=2p &&\quad\calO=[2^n]_{I,II} &&  \dim \calO = n^2-n \\     
Case\ 4:\  &a=2p  \ \text{ or } \  2p+1 &&\quad \calO=[ 2^{2k} \ 1^{2n-4k}] \ \ 0\le k < n/2&& \dim \calO = 4nk-4k^2-2k\\
\end{aligned} 
$$

Note that  these are the orbits listed in \cite{McG}.  The proof of the next Proposition, and the details about the nature of the component groups, are 
in Section \ref{ss:clifford}.

\begin{prop} (Corollary \ref{c:cgp})
  \begin{description}
  \item[Case 1] If $\calO=[3\ 2^{2p-2}\ 1],$ then $A_{{K}}(\calO)\cong
\bb Z_2\times\bb Z_2$. 
\item[Case 2] If $\calO=[3\ 2^{2k}\ 1^{2n-4k-3}]$ with  $2n-4k-3>1,$ then  $A_{{K}}(\calO)\cong
\bb Z_2.$
\item[Case 3] If  $\calO=[2^{2p}]_{I,II}$, then $A_{{K}} (\calO) \cong\bb Z_2.$ 
\item[Case 4] If $\calO=[2^{2k}\ 1^{2n-4k}]$ with $2k<n,$ then
  $A_{{K}} (\calO) \cong 1.$ 
  \end{description}
In all cases  $C_{K}(\calO)=Z( K)\cdot C_{ K}(\calO)^0.$ 

\end{prop}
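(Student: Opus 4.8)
The plan is to reduce the statement to a computation in the Clifford algebra $Cl(2n,\bbC)$, inside whose even part $Cl^{0}(2n,\bbC)$ the group $K=\Spin(2n,\bbC)$ sits. Since $\fk s\cong\frakso(2n,\bbC)$ with $K$ acting by the adjoint action, and the adjoint action of $\Spin(2n,\bbC)$ on $\frakso(2n,\bbC)$ factors through the double cover $\pi\colon\Spin(2n,\bbC)\to\SO(2n,\bbC)$, we have $C_K(\calO)=\pi^{-1}\bigl(C_{\SO(2n,\bbC)}(e)\bigr)$. Thus it suffices to (a) describe $C_{\SO(2n,\bbC)}(e)$ and its component group, and (b) analyze the preimage under $\pi$.

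First I would fix a good representative: decompose $\bbC^{2n}=\bigoplus_i W_i\otimes\bbC^{i}$ with $e=\bigoplus_i \mathbf{1}_{W_i}\otimes N_i$ ($N_i$ the principal nilpotent on $\bbC^{i}$), the form symmetric on $W_i$ for $i$ odd and symplectic for $i$ even, $m_i=\dim W_i$; for the very even orbit in Case 3 one selects one of the two $\SO$-orbits. The classical centralizer description (which may itself be obtained Clifford-algebraically) gives that $C_{O(2n,\bbC)}(e)$ has reductive part $\prod_{i\text{ odd}}O(m_i,\bbC)\times\prod_{i\text{ even}}\mathrm{Sp}(m_i,\bbC)$, whence $C_{\SO(2n,\bbC)}(e)^{0}$ has reductive part $\prod_{i\text{ odd}}\SO(m_i,\bbC)\times\prod_{i\text{ even}}\mathrm{Sp}(m_i,\bbC)$ and $A_{\SO(2n,\bbC)}(e)\cong(\bbZ/2)^{\max(0,a-1)}$, where $a$ is the number of distinct odd parts (the very even partitions being exactly those with $a=0$). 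Reading off $a$ yields $A_{\SO(2n,\bbC)}(e)\cong\bbZ/2$ in Cases 1 and 2 and $A_{\SO(2n,\bbC)}(e)=1$ in Cases 3 and 4. One then checks that $[-I_{2n}]$ generates $A_{\SO(2n,\bbC)}(e)$: minus the identity on each even block and on each $\SO(m_i,\bbC)$-factor lies in $C_{\SO(2n,\bbC)}(e)^{0}$, so multiplying $-I_{2n}$ by a suitable element of $C^{0}$ leaves the product of the odd-block sign changes; equivalently $C_{\SO(2n,\bbC)}(e)=\langle -I_{2n}\rangle\cdot C_{\SO(2n,\bbC)}(e)^{0}$.

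Next I would pass to $\Spin(2n,\bbC)$; write $z$ for the nontrivial element of $\ker\pi$. Two inputs are needed. First, whether $z\in C_K(\calO)^{0}$, equivalently whether $\pi^{-1}\bigl(C_{\SO(2n,\bbC)}(e)^{0}\bigr)$ is connected: by the covering-space criterion ($\pi^{-1}(H)$ is connected, for connected $H$, iff $\pi_1(H)$ surjects onto $\pi_1(\SO(2n,\bbC))=\bbZ/2$) this holds precisely when the reductive part of $C_{\SO(2n,\bbC)}(e)^{0}$ contains a factor $\SO(m,\bbC)$ with $m\ge 2$ — true in Cases 2 and 4, false in Cases 1 and 3, where modulo unipotents $C^{0}$ is a product of symplectic groups (possibly trivial), which being simply connected forces the restricted cover to split. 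Second, the order in $\Spin(2n,\bbC)$ of a lift of a generator of $A_{\SO(2n,\bbC)}(e)$: in Cases 1 and 2 that generator may be taken to be $-I_4\oplus I_{2n-4}$, the sign change on one size-$3$ block together with one size-$1$ block, a product of four commuting reflections; choosing an orthonormal basis $e_1,\dots,e_4$ of the relevant nondegenerate $4$-space, a lift is $e_1e_2e_3e_4\in\Spin(2n,\bbC)$, and $(e_1e_2e_3e_4)^{2}=(-1)^{\binom{4}{2}}=1$, so this lift has order $2$. Assembling: Case 1 has $|A_K(\calO)|=2\,|A_{\SO(2n,\bbC)}(e)|=4$ with two commuting involutions, hence $A_K(\calO)\cong\bbZ/2\times\bbZ/2$; Case 2 has $A_K(\calO)\cong A_{\SO(2n,\bbC)}(e)\cong\bbZ/2$; Case 3 has $A_K(\calO)=\langle[z]\rangle\cong\bbZ/2$; Case 4 has $A_K(\calO)=1$.

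Finally, for the uniform claim $C_K(\calO)=Z(K)\cdot C_K(\calO)^{0}$: the inclusion $\supseteq$ is automatic because $Z(K)$ acts trivially in the adjoint representation. For $\subseteq$, note $z\in Z(K)$ and that $-I_{2n}$ lifts to a central element $\omega$ of $\Spin(2n,\bbC)$ — in the Clifford model $\omega=e_1\cdots e_{2n}$, the volume element, which anticommutes with vectors and hence is central in $Cl^{0}(2n,\bbC)$. Since $[-I_{2n}]$ generates $A_{\SO(2n,\bbC)}(e)$ and $C_K(\calO)=\pi^{-1}\bigl(C_{\SO(2n,\bbC)}(e)\bigr)$, the classes $[z]$ and $[\omega]$ together generate $A_K(\calO)$; as $z,\omega\in Z(K)$ this gives $C_K(\calO)=Z(K)\cdot C_K(\calO)^{0}$. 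I expect the real work to be the Clifford-algebra bookkeeping in the third step: pinning down the signs when squaring products of basis vectors (this is exactly what separates $\bbZ/2\times\bbZ/2$ from $\bbZ/4$ in Case 1) and deciding in each case whether $\pi^{-1}(C_{\SO(2n,\bbC)}(e)^{0})$ is connected — the very even Case 3 being the subtle one, since there $A_{\SO(2n,\bbC)}(e)$ is trivial and the whole component group of $C_K(\calO)$ is produced by the non-splitting of the spin cover over the symplectic centralizer.
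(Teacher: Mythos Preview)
Your argument is correct and follows essentially the same route as the paper: compute the $SO(2n,\bbC)$-centralizer and its component group, then lift to $Spin(2n,\bbC)$ via the Clifford algebra. The only difference is cosmetic --- where you invoke the $\pi_1$ covering-space criterion to decide whether $z\in C_K(\calO)^0$, the paper exhibits an explicit path $\prod_j\exp\bigl(i\theta(1-e_jf_j)/2\bigr)$ from $I$ to $-I$ whenever an odd block has multiplicity $>1$, and your order computation $(e_1e_2e_3e_4)^2=(-1)^{\binom{4}{2}}=1$ in Case~1 is exactly the paper's relation $(\Ep_3\Ep_1)^2=I$ for the explicit Clifford representatives $\Ep_{2k+1}$.
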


\section{Regular Sections} 
We use the notation introduced in Sections 1 and 2. 
We compute the centralizers needed for $R(\calO,\psi)$ in $\fk k$ and in ${K}$. We use the standard roots 
and basis for $\fk{so}(2n,\bbC).$ A basis for the Cartan subalgebra is given by $H(\ep_i)$, the root vectors are 
$X(\pm\ep_i\pm\ep_j)$. Realizations in terms of the Clifford algebra and explicit calculations are in Section \ref{ss:clifford}.

Let $e$ be a  representative the orbit $\calO$, and let $\{e,h,f\}$ be the corresponding
Lie triple.  Let
\begin{itemize}
\item $C_{\fk k} (h)_i$ be the $i$-eigenspace of $ad(h)$ in $\fk k$,
\item  $C_{\fk k} (e)_i$ be the $i$-eigenspace of $ad(h)$ in the
  centralizer of $e$ in $\fk k$,
\item $C_\fk k (h)^+:= \sum \limits _{i>0} C_\fk k (h) _i$, and $C_\fk k (e)^+:= \sum \limits _{i>0} C_\fk k (e) _i.$  
\end{itemize}

\subsection{} We describe the centralizer for $\calO= [ 3 \ 2^{2k}\ 1^{2n-4k-3}]$ in detail.  These are Cases 1 and 2. 
 Representatives for $e$ and $h$ are 
$$
\begin{aligned} 
e&= X(\ep_1-\ep_{2k+2}) +X(\ep_1+\ep_{2k+2}) + \sum \limits_{2\le i\le 2k+1} X(\ep_i +\ep _{k+i})\\
h&= 2 H(\ep_1)+\sum\limits_{2\le i\le 2k+1} H(\ep_i) = H(2, \underset{2k}{\underbrace{1,\dots,1}},
\underset{n-1-2k}{\underbrace{0, \dots, 0}}). 
\end{aligned}
$$

Then
\begin{equation}
\begin{aligned}
C_{\fk k} (h)_0 &= \fk{gl} (1)\times \fk{gl} (2k)\times { \fk{so}(2n-2-4k)}, \\
C_{\fk k} (h)_1 &= Span\{ X(\ep_1 - \ep_i) , \   X(\ep_i \pm \ep _j )   , \ 2\le i\le 2k+1<j\le n\}, \\
C_{\fk k} (h)_2 &= Span \{ X( \ep _1\pm \ep _j), \  X(\ep_i+ \ep _l),  2\le i \neq l \le 2k+1<j \le n \}, \\
C_{\fk k} (h)_3 &= Span \{  X( \ep_1 +\ep _i ), \ 2\le i\le 2k+1\}.
\end{aligned}
\end{equation}

Similarly

\begin{equation}
\begin{aligned}
C_{\fk k}(e)_0 &\cong \fk{sp}(2k)\times{ {\fk{so}(2n-3-4k)}},\\
C_{\fk k} (e)_1 &= Span\{ X( \ep_1 - \ep_i ) - X(\ep _{k+i }
\pm \ep_{2k+2}) , \  X(\ep _1 -\ep _{k+i})- X(\ep _i \pm\ep _{2k+2}), \ 2\le i \le k+1, \\ 
& X({\ep_j }\pm \ep_l ) , \ 2\le j\le 2k+1, \ 2k+3\le l \le n\},\\
C_{\fk k} (e)_2 &= C_{\fk k} (h)_2 , \\
C_{\fk k} (e)_3 &= C_{\fk k} (h)_3.  
\end{aligned}
\end{equation}

We denote by $\chi$ the trivial character of $C_{\fk k}(e)$. A representation of ${K}$ will be denoted by its highest weight:
$$
V=V(a_1,\dots, a_p), \quad a_1\ge \dots \ge |a_p|,
$$
with all $a_i\in \bbZ$ or all $a_i\in \bbZ+1/2$.

We will compute 
\begin{equation}
\label{eq:mreal}
\Hom_{C_{\fk k}(e)}[V^*, \chi]=
\Hom_{C_{\fk k}(e)_0}\left [V^*/(C_{\fk k}(e)^+V^*), \chi \right ]:=
\left (V^*/(C_{\fk k}(e)^+V^*\right )^\chi.
\end{equation} 

\subsection{Case 1.} $n=2p$, $\calO=[3\ 2^{n-2}\ 1]$.

In  this case $C_{\fk k}(h)_0 = \fk{gl}(1)\times \fk{gl} (n-2)\times \fk{so}(2), C_{\fk k}(e)_0 =\fk{sp}(n-2)$.
\medskip

Consider the parabolic $\fk p = \fk l +\fk n$
determined by $h$,
\begin{equation}
\label{eq:ell}
\begin{aligned}
\fk l &= C_{\fk k}(h)_0 \cong \fk{gl}(1)\times \fk{gl}(n-2)\times \fk{so}(2), \\
\fk n &= C_{\fk k}(h)^+.
\end{aligned}
\end{equation}
We denote by $V^*$, the dual of $V$. Since $n=2p,$ $V^*\cong V.$ Then
$V^*$ is a quotient of a generalized Verma module $M(\la)=U(\fk
k)\otimes _{U(\overline{\fk p} )}  F(\la)$, where $\la$ is a weight of
$V^*$ which is dominant for $\overline{\fk p}$. This is 
$$
\la = (-a_1;-a_{n-1} , \dots, -a_2; -a_n ).
$$ 
The $;$ denotes the fact that this is a (highest) weight of $\fk l\cong \fk{gl}(1)\times \fk{gl}(n-2)\times \fk{so}(2)$.

We choose the standard positive root
system $\triangle^+ (\fk l)$ for $\fk l$. { As a $C_\fk k(e)_0$-module,  
$$
\fk n =C_{\fk
  k}(e)^+\oplus \fk n ^{\perp},
$$ 
where we can choose $\fk n ^{\perp}=Span \{  X(\ep_1-\ep_{j} ) , \ 2\le j\le n-1
\}$. This complement is $\fk l$-invariant.
It restricts to the standard
module of $C_{\fk k}(e)_0=\fk{sp}(n-2).$ }

\medskip
The  generalized Bernstein-Gelfand-Gelfand resolution is:
\begin{equation}
  \label{eq:bgg:cx}
0 \dots \longrightarrow \bigoplus _{w\in W^+, \ \ell(w)=k}M(w\cdot\la) \longrightarrow\dots   \longrightarrow  \bigoplus _{w\in W^+, \ \ell(w)=1}M(w\cdot\la) \longrightarrow M(\la) \longrightarrow
V^*  \longrightarrow 0,
\end{equation}
with $w\cdot \la:= w(\la+\rho(\fk k))-\rho(\fk k)$, and $w\in W^+$, the $W(\fk l)$-coset representatives that make $w\cdot \la$ dominant for $\Delta^+(\fk l).$ This
is a free $C_{\fk k}(e)^+$-resolution so we can compute cohomology by considering 
\begin{equation}
  \label{eq:coh}
0 \dots \longrightarrow \bigoplus _{w\in W^+, \ \ell(w)=k} \ovl{M(w\cdot\la)} \longrightarrow\dots   \longrightarrow  \bigoplus _{w\in W^+, \ \ell(w)=1}\ovl{M(w\cdot\la)} \longrightarrow \ovl{M(\la)}   \longrightarrow \ovl{V^*}  \longrightarrow  0,
\end{equation} 
where $\ovl{X}$ denotes $X/[C_{\fk k}(e)^+ X]$.

Note that in the sequences, 
$M(w\cdot\la)\cong S(\fk n)\otimes_{\bb C}
F(w\cdot\la)$ and $\ovl{M(w\cdot\la)}\cong S(\fk n^{\perp})\otimes_{\bb C}
F(w\cdot\la)$.
As an $\fk l$-module, $\fk n^{\perp}$ has  highest weight 
$(1;0, \dots,0,-1 ;0)$. Then $S^k(\fk n^\perp)\cong F(k;0,\dots ,0 ,-k;0)$
as an $\fk l$-module. 

Let $\mu:=(-\alpha_1 ; -\alpha_{n-1} , \dots, -\alpha_2 ; -\alpha_n)$ be the highest weight of an $\fk l$-module.
By the Littlewood-Richardson rule,  
\begin{equation}
  \label{eq:tensorproduct}
S^k(\fk n ^{\perp})\otimes F_{\mu}=\sum V(-\alpha_1 +k; -\alpha_{n-1} -k_{n-1} ,
\dots, -\alpha_3-k_3, -\alpha_2-k_2;-\alpha_n ). 
\end{equation}
The sum is taken over 
$$\{k_i\ | \ k_i\ge 0,\ \sum k_i =k, \   0\le k_i\le \alpha_{i-1}-\alpha_{i}, \ 3\le i \le n-1\}.$$
  
\begin{lemma}\label{le:Sn}

Hom$ _{C_{\fk k}(e) _0}[S^k (\fk n ^{\perp}) \otimes F_{\mu}  : \chi] \neq 0$ for every $\mu$.
The multiplicity is 1. 
\begin{proof}
Since $(\fk{gl}(n-2),\fk{sp}(n-2))$ is a hermitian symmetric pair, Helgason's
theorem implies that a composition factor in $S(\fk n^\perp)\otimes
F_{\mu}$ admits  $C_{\fk k}(e)_0$-fixed vectors only if 
$$
-\alpha_{n-1}-k_{n-1}=-\alpha_{n-2}-k_{n-2},\
-\alpha_{n-3}- k_{n-3}=-\alpha_{n-4}-k_{n-4},\dots ,-\alpha_3-k_3=-\alpha_2-k_2.
$$

The conditions ${ 0\le k_i\le \alpha_{i-1}-\alpha_{i} } $ imply
\begin{equation}
\begin{aligned}
         k_{n-2} =0, &\quad k_{n-1} = \alpha_{n-2}-\alpha_{n-1}, \\
         \hspace*{4em} \vdots\\
        k_4=0,& \quad  k_5 =\alpha_4-\alpha_5, \\
        k_2=0, & \quad k_3=\alpha_2-\alpha_3.
        \end{aligned}
\end{equation}        

Therefore, given $\mu$, the weight of the $C_{\fk k}(e)_0$-fixed vector in $S(\fk n ^{\perp})\otimes F_{\mu}$ is 
$$
(-\alpha_1+\alpha_2-\alpha_3+\alpha_4-\alpha_5+\dots+\alpha_{n-2}-\alpha_{n-1} ; -\alpha_{n-2},-\alpha_{n-2}, \dots, -\alpha_2,\alpha_2; -\alpha_n),
$$
and the multiplicity is 1.

\end{proof}
\end{lemma}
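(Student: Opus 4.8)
The plan is to turn the left side of \eqref{eq:mreal} into a branching multiplicity for the pair $(\fk{gl}(n-2),\fk{sp}(n-2))$ and then solve a short linear system. In Case~1 one has $n-2$ even and $C_{\fk k}(e)_0=\fk{sp}(n-2)$, embedded in the standard way inside the middle factor $\fk{gl}(n-2)$ of $\fk l=\fk{gl}(1)\times\fk{gl}(n-2)\times\fk{so}(2)$; the two one-dimensional factors $\fk{gl}(1)$ and $\fk{so}(2)$ play no role in $\fk{sp}(n-2)$-invariance. Hence $\Hom_{C_{\fk k}(e)_0}[S^k(\fk n^\perp)\otimes F_\mu,\chi]$, summed over $k$ (that is, with $S(\fk n^\perp)$ in place of $S^k(\fk n^\perp)$), is computed by taking the decomposition of $S(\fk n^\perp)\otimes F_\mu$ into $\fk l$-irreducibles, which is exactly \eqref{eq:tensorproduct}, and counting for each summand the dimension of the space of $\fk{sp}(n-2)$-fixed vectors in its underlying $\fk{gl}(n-2)$-module.

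The first step is the Cartan--Helgason theorem for the symmetric pair $(\fk{gl}(n-2),\fk{sp}(n-2))$: an irreducible $\fk{gl}(n-2)$-module with dominant highest weight $\nu=(\nu_1\ge\cdots\ge\nu_{n-2})$ has a nonzero (hence one-dimensional) space of $\fk{sp}(n-2)$-fixed vectors if and only if $\nu_1=\nu_2,\ \nu_3=\nu_4,\ \dots,\ \nu_{n-3}=\nu_{n-2}$. (Equivalently, this is Littlewood's restriction rule for $\fk{sp}(n-2)\subset\fk{gl}(n-2)$ specialized to the trivial $\fk{sp}$-type.) Applying it to the summand of \eqref{eq:tensorproduct} whose $\fk{gl}(n-2)$-highest weight, in decreasing order, is $(-\alpha_{n-1}-k_{n-1},\dots,-\alpha_3-k_3,-\alpha_2-k_2)$, the ``doubling'' condition becomes exactly the system of equations displayed in the lemma.

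The second step is an elementary elimination. Combine those equations with the Littlewood--Richardson bounds $0\le k_i\le\alpha_{i-1}-\alpha_i$ for $3\le i\le n-1$ (together with $k_2\ge 0$, which carries no upper bound) coming from \eqref{eq:tensorproduct}. From the first equation $k_{n-1}-k_{n-2}=\alpha_{n-2}-\alpha_{n-1}$, together with $k_{n-1}\le\alpha_{n-2}-\alpha_{n-1}$ and $k_{n-2}\ge 0$, one is forced to $k_{n-2}=0$ and $k_{n-1}=\alpha_{n-2}-\alpha_{n-1}$; the next equation then forces $k_{n-4}=0$, $k_{n-3}=\alpha_{n-4}-\alpha_{n-3}$, and so on down to $k_2=0$, $k_3=\alpha_2-\alpha_3$. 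This is the unique solution, it lies in all the boxes, and it pins down a single total degree $k=\sum_i k_i=\sum_j(\alpha_{2j}-\alpha_{2j+1})$. The corresponding summand has the weight recorded in the proof and, by Cartan--Helgason, contributes precisely one $\fk{sp}(n-2)$-fixed vector, while all other summands contribute none. Therefore $\Hom_{C_{\fk k}(e)_0}[S(\fk n^\perp)\otimes F_\mu,\chi]$ is one-dimensional for every $\mu$.

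I do not anticipate a genuine obstacle: the substance is the $\fk l$-decomposition \eqref{eq:tensorproduct}, the Cartan--Helgason criterion, and the elimination above. The care points are bookkeeping ones: matching the coordinate convention of \eqref{eq:tensorproduct} with the order in which the pattern $\nu_1=\nu_2,\ \nu_3=\nu_4,\dots$ is read off the weight; checking that the absence of an upper bound on $k_2$ does not disturb uniqueness in the last equation; and confirming that nothing in the argument distinguishes integral from half-integral $a_i$, so that it covers both families of highest weights in Case~1.
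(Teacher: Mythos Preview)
Your proposal is correct and follows essentially the same route as the paper's own proof: both arguments invoke Helgason's theorem for the symmetric pair $(\fk{gl}(n-2),\fk{sp}(n-2))$ to impose the doubling conditions on the $\fk{gl}(n-2)$-highest weights appearing in \eqref{eq:tensorproduct}, and then use the Littlewood--Richardson bounds $0\le k_i\le\alpha_{i-1}-\alpha_i$ to force the unique solution $k_{2j}=0$, $k_{2j+1}=\alpha_{2j}-\alpha_{2j+1}$. Your write-up spells out the elimination a bit more carefully (notably the observation that $k_2$ carries no upper bound but is still pinned down by $k_2\ge 0$), but there is no substantive difference in method.
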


\begin{cor}
For every $V(a_1,\dots,a_n)\in \widehat{{K}},$, Hom$_{C_{\fk k}(e)} [V,\chi]= 0$ or 1.  The action of  $\ad h$ is $-2\sum\limits_{1\le i \le p} a_{2i-1}$.
\begin{proof}
The first statement follows from Lemma \ref{le:Sn} and the surjection 
$$
\ovl{M(\la)}\cong S(\fk n^{\perp}) \otimes_{\bbC} F(\la)\longrightarrow \ovl{V^*}\longrightarrow 0.
$$
The action of $\ad h$ is computed from the module 
\begin{eqnarray}\label{eq:fix-vec}
V(-a_1+k; -a_{n-2}, -a_{n-2} , \dots, -a_2,-a_2 ; -a_n)
\end{eqnarray}
with 
$k=a_2-a_3+a_4-a_5+\dots+a_{n-2}-a_{n-1}$. The value is $-2\sum\limits_{1\le i \le p} a_{2i-1}$.
\end{proof}
\end{cor}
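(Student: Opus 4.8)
The plan is to read off both assertions from the single surjection $\ovl{M(\la)}\twoheadrightarrow\ovl{V^*}$ at the tail of the bar complex (\ref{eq:coh}), together with Lemma \ref{le:Sn}; for an \emph{upper} bound on the multiplicity one does not need the rest of the generalized BGG resolution (that would be needed only to decide when the multiplicity is $1$ rather than $0$). Since $n=2p$ gives $V\cong V^*$, by (\ref{eq:mreal}) it suffices to bound $\dim\Hom_{C_{\fk k}(e)_0}[\ovl{V^*},\chi]$, where $\ovl{X}=X/(C_{\fk k}(e)^+X)$.

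First I would observe that $\ovl{M(\la)}\twoheadrightarrow\ovl{V^*}$ is a $C_{\fk k}(e)_0$-equivariant surjection, being the reduction of $M(\la)\twoheadrightarrow V^*$ modulo $C_{\fk k}(e)^+$. Restricting to $\chi$-isotypic components keeps it surjective: a preimage of a $\chi$-vector splits into $C_{\fk k}(e)_0$-isotypic pieces, and only its $\chi$-piece can have nonzero image. Hence $\ovl{V^*}^\chi$ is a quotient of $\ovl{M(\la)}^\chi$. Now $\ovl{M(\la)}\cong S(\fk n^\perp)\otimes_\bbC F(\la)$ with $\fk n^\perp$ restricting to the standard module of $C_{\fk k}(e)_0=\fk{sp}(n-2)$, and by Lemma \ref{le:Sn} (applied with $\mu=\la$) the $\chi$-isotypic part of $S(\fk n^\perp)\otimes F(\la)$ is one-dimensional, living entirely in the symmetric power $S^k(\fk n^\perp)$ for $k=a_2-a_3+a_4-\cdots+a_{n-2}-a_{n-1}$. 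Therefore $\dim\Hom_{C_{\fk k}(e)}[V,\chi]=\dim\ovl{V^*}^\chi\le 1$.

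For the $\ad h$-eigenvalue I would use that $h$ lies in the Cartan subalgebra of $\fk l=C_{\fk k}(h)_0$, so the surjection above is $\ad h$-graded, and $\ad h$ acts on all of $F(\la)$ by the single scalar $\langle\la,h\rangle$ (every weight of $F(\la)$ differs from $\la$ by roots of $\fk l$, all of which vanish on $h$), while it acts by $+1$ on each generator $X(\ep_1-\ep_j)$ of $\fk n^\perp$, since $(\ep_1-\ep_j)(h)=2-1=1$. Thus, when $\ovl{V^*}^\chi\neq 0$, its generator is homogeneous of $\ad h$-weight $k+\langle\la,h\rangle$. With $\la=-a_1\ep_1-a_{n-1}\ep_2-\cdots-a_2\ep_{n-1}-a_n\ep_n$ and $h=H(2,1,\dots,1,0)$ (with $n-2$ entries equal to $1$), one computes $\langle\la,h\rangle=-2a_1-(a_2+a_3+\cdots+a_{n-1})$, and the alternating sum telescopes against this: $k+\langle\la,h\rangle=-2(a_1+a_3+\cdots+a_{n-1})=-2\sum_{1\le i\le p}a_{2i-1}$.

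The content that makes this work is Lemma \ref{le:Sn}, where the genuine difficulty — combining Helgason's theorem for the Hermitian pair $(\fk{gl}(n-2),\fk{sp}(n-2))$ with the Littlewood--Richardson rule — has already been handled. The Corollary itself is then essentially formal; the only care needed is bookkeeping: that the isomorphism $\ovl{M(\la)}\cong S(\fk n^\perp)\otimes F(\la)$ is simultaneously $\ad h$-graded and $C_{\fk k}(e)_0$-equivariant, and that the alternating sum in $k$ cancels correctly against $\langle\la,h\rangle$. I expect no serious obstacle beyond that.
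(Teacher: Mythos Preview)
Your argument is correct and follows the same route as the paper: both use the surjection $\ovl{M(\la)}\cong S(\fk n^\perp)\otimes F(\la)\twoheadrightarrow\ovl{V^*}$ together with Lemma~\ref{le:Sn} for the multiplicity bound, and both read off the $\ad h$-eigenvalue from the unique symmetric degree $k=a_2-a_3+\cdots+a_{n-2}-a_{n-1}$ where the $\chi$-fixed vector sits. Your separation of the eigenvalue as $k+\langle\la,h\rangle$ (using that $h$ is central in $\fk l$) is slightly more explicit than the paper's direct computation from the weight $(-a_1+k;-a_{n-2},-a_{n-2},\dots,-a_2,-a_2;-a_n)$, but the content is identical.
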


\subsubsection*{$\mathbf{\ell(w)=1}$}
To show that the weights in (\ref{eq:fix-vec}) actually occur, it is enough to show that these weights 
do not occur in the term in the BGG resolution 
(\ref{eq:coh}) with $\ell(w)=1$.

 We calculate $w\cdot\la:$
$$
\rho=\rho(\fk k)=(-(n-1) ; -1, -2, \dots, -(n-2); 0)
$$
is dominant for $\ovl{\fk p}$, and 
$$
 \la +\rho = (-a_1-n+1; -a_{n-1}-1, -a_{n-2} -2 , \dots, -a_2-n+2; -a_n). 
 $$  
   There are three elements $w\in W^+$ of length 1. They are the  left
   $W(\fk l)$-cosets of 
$$
w_1=s_{\ep_1- { \ep_{n-1} }}, \ w_2=  s_{ { \ep_{2}}-\ep_n}, w_3=s_{ { \ep
     _{2} } +\ep _n}.
$$ 
So 
\begin{equation}
 \begin{aligned}
 w_1\cdot \la &= { (-a_2 +1 ; -a_{n-1}, -a_{n-2}, \dots, -a_4, -a_3,-a_1-1 ;  -a_n)},\\
 w_2\cdot \la&= { (-a_1; -a_n+1, -a_{n-2}, -a_{n-3}, \dots, -a_3, -a_2; -a_{n-1}-1 )} ,\\
 w_3\cdot\la &= { (-a_1; a_n +1, -a_{n-2},-a_{n-3} ,\dots, -a_3,-a_2;a_{n-1}+1)}. 
\end{aligned}
 \end{equation}

\begin{lemma}
For all $\la$, Hom$_{C_{\fk k}(e)}[\ovl{M(w_i\cdot \la)} , \chi  ]=1$. The eigenvalues of $\ad h$ are different from 
$-2\sum \limits_{1\le i\le p}a_{2i-1}$ for each $w_i$.

\begin{proof}

The $\fk{sp}(n-2)$-fixed weights come from {$S(\fk n^\perp)\otimes
F(w_i\cdot \la), \ i=1, 2, 3, \ $ are }

\begin{equation}\label{eq:spn-2}
{ \begin{aligned}
 w_1&&\longleftrightarrow& \mbox{  \footnotesize $( a_1-a_2-a_3 +a_4 -a_5+\dots +a_{n-2}-a_{n-1} +2 ; -a_{n-2},-a_{n-2}, \dots , -a_4,-a_4, -a_2,-a_2; -a_n)$ },\\
 w_2 &&\longleftrightarrow & \mbox{ \footnotesize $ (-a_1+a_2-a_3+\dots  +a_{n-4}-a_{n-3}+a_{n-2}-a_n+1; -a_{n-2}, -a_{n-2}, \dots,-a_4, -a_4,-a_2,-a_2;-a_{n-1}-1)$ },\\
 w_3 &&\longleftrightarrow& \mbox{ \footnotesize $(-a_1+a_2- a_3+\dots + a_{n-4}-a_{n-3}+a_{n-2}+a_n+1 ; -a_{n-2} , -a_{n-2}, \dots,-a_4,-a_4, -a_2, -a_2; a_{n-1}-1)$}. 
\end{aligned}}
 \end{equation}
 
The negatives of the weights of $h$   are

{
\begin{equation}
  \label{eq:wth}
  \begin{aligned}
&w_0=1&&\longleftrightarrow &&2(a_1+a_3+\dots +a_{n-1}),\\
&w_1&&\longleftrightarrow &&2(a_2+a_3+a_5 \dots +a_{n-1}+1),\\
&w_2&&\longleftrightarrow &&2(a_1+a_3+\dots +a_{n-3} +a_{n}-1),\\
&w_3&&\longleftrightarrow &&2(a_1+a_3+\dots +a_{n-3}-a_n -1).   
  \end{aligned}
\end{equation}
}
The last three weights are not equal to the first one. This completes
the proof.
\end{proof}
\end{lemma}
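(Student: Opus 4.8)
The plan is to prove the last Lemma by a direct comparison of $\ad h$-eigenvalues, exactly mirroring the structure of the preceding Corollary. Recall that we have already established, via Helgason's theorem and the hermitian symmetric pair $(\frakgl(n-2),\fraksp(n-2))$, that in each term $\ovl{M(w\cdot\la)}\cong S(\fk n^\perp)\otimes_{\bbC} F(w\cdot\la)$ of the BGG resolution, the space of $C_{\fk k}(e)_0$-fixed vectors is at most one-dimensional; so the first assertion (that $\Hom_{C_{\fk k}(e)}[\ovl{M(w_i\cdot\la)},\chi]=1$) will follow once I check that the dominance/parity constraints on the $k_i$ in \eqref{eq:tensorproduct} can actually be met for each of $w_1\cdot\la$, $w_2\cdot\la$, $w_3\cdot\la$. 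This is the content of displaying the fixed weights \eqref{eq:spn-2}: I would simply record $w_i\cdot\la$ from the formulas above, feed it into the recipe of Lemma \ref{le:Sn} (the alternating-sum pattern $k_{2j}=0$, $k_{2j+1}=\alpha_{2j}-\alpha_{2j+1}$), and read off the resulting $\fk l$-highest weight of the fixed line; the resulting expressions are exactly those in \eqref{eq:spn-2}.

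Second, and this is the real point of the Lemma, I would compute the $\ad h$-eigenvalue on each fixed line. Since $h=H(2,1,\dots,1,0,\dots,0)$, the eigenvalue of $\ad h$ on the weight $(-\alpha_1;-\alpha_{n-1},\dots,-\alpha_2;-\alpha_n)$ is $-2\alpha_1-(\alpha_2+\cdots+\alpha_{2k+1})$ — but in Case 1 one has $2k+1=n-1$, and the bookkeeping collapses (as in the Corollary) to give the values listed in \eqref{eq:wth}: $2(a_1+a_3+\cdots+a_{n-1})$ for $w_0=1$, and $2(a_2+a_3+a_5+\cdots+a_{n-1}+1)$, $2(a_1+a_3+\cdots+a_{n-3}+a_n-1)$, $2(a_1+a_3+\cdots+a_{n-3}-a_n-1)$ for $w_1,w_2,w_3$ respectively. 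The verification that these four values are produced is just substitution into the weight-to-eigenvalue map, using $n=2p$ so the odd-indexed coordinates are $a_1,a_3,\dots,a_{n-1}$.

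Third, I must show the last three eigenvalues differ from the first, $2(a_1+a_3+\cdots+a_{n-1})$. Comparing $w_1$ with $w_0$: the difference is $2(a_2-a_1+1)$, which is zero only if $a_1=a_2+1$; but then one must check this forced coincidence does not actually let the target weight \eqref{eq:fix-vec} appear in the $\ell(w)=1$ term — I would handle the boundary case by noting that in the BGG complex the relevant map is injective in the appropriate degree, or more concretely that even when $a_1=a_2+1$ the $\fk l$-types in \eqref{eq:spn-2} and \eqref{eq:fix-vec} are distinct (their non-leading coordinates differ), so no cancellation of the sought weight occurs. Comparing $w_2$, $w_3$ with $w_0$: the differences are $2(a_n-a_{n-1}-1)$ and $2(-a_n-a_{n-1}-1)$; since $a_{n-1}\ge|a_n|$ one has $a_n-a_{n-1}-1<0$ and $-a_n-a_{n-1}-1<0$ strictly (the $-1$ shift is what rescues the equality case $a_{n-1}=\pm a_n$), so these are never $0$. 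Hence the weight \eqref{eq:fix-vec} survives in cohomology, which is what is needed.

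The main obstacle I anticipate is precisely the equality-case analysis for $w_1$, i.e. when $a_1=a_2+1$: here the crude eigenvalue comparison fails and one needs the finer statement that the $C_{\fk k}(e)_0$-fixed vector in $\ovl{M(w_1\cdot\la)}$ sits in a different $\fk l$-isotypic component than the sought fixed vector in $\ovl{M(\la)}$, so that the boundary map of the complex cannot kill the latter. Everything else — writing down \eqref{eq:spn-2} from Lemma \ref{le:Sn}, and writing down \eqref{eq:wth} from the explicit form of $h$ — is routine substitution, and the $w_2,w_3$ comparisons are immediate from $a_{n-1}\ge|a_n|$ together with the $-1$ in \eqref{eq:wth}.
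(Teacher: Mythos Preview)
Your approach is the same as the paper's: apply the recipe of Lemma~\ref{le:Sn} to each $w_i\cdot\la$, write down the unique $C_{\fk k}(e)_0$-fixed weight, compute the $\ad h$-eigenvalue on it, and compare with the $w_0$ value. The paper does exactly this and then simply asserts that the three $w_i$-values differ from the $w_0$-value.

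Your worry about a boundary case for $w_1$ (namely $a_1=a_2+1$) is unfounded and comes from a sign slip in the $w_1$ line of \eqref{eq:wth}. Redo the substitution: from $w_1\cdot\la=(-a_2+1;\,-a_{n-1},\dots,-a_3,\,-a_1-1;\,-a_n)$ one reads off, in the notation $\mu=(-\alpha_1;-\alpha_{n-1},\dots,-\alpha_2;-\alpha_n)$, that $\alpha_1=a_2-1$, $\alpha_2=a_1+1$, $\alpha_j=a_j$ for $3\le j\le n-1$, and $\alpha_n=a_n$. The negative of the $\ad h$-eigenvalue on the fixed line is then
\[
2(\alpha_1+\alpha_3+\alpha_5+\dots+\alpha_{n-1})
   \;=\;2\big((a_2-1)+a_3+a_5+\dots+a_{n-1}\big),
\]
so the difference from the $w_0$ value $2(a_1+a_3+\dots+a_{n-1})$ is $2(a_2-a_1-1)\le -2<0$ for every dominant $(a_1,\dots,a_n)$. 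Hence the $w_1$ eigenvalue is \emph{always} strictly smaller than the $w_0$ one, and no boundary case occurs. (The same recomputation shows that the last pair in the middle block of the $w_1$ line of \eqref{eq:spn-2} should read $-(a_1+1),-(a_1+1)$ rather than $-a_2,-a_2$; this does not affect the argument.) Your treatment of $w_2$ and $w_3$ via $a_{n-1}\ge|a_n|$ together with the $-1$ shift is correct and matches the paper.

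One further remark: even if a boundary case had arisen, your proposed fallback of comparing $\fk l$-isotypic components of the fixed vectors is not quite legitimate as stated, because $C_{\fk k}(e)^+$ is not $\fk l$-stable, so the quotients $\ovl{M(w\cdot\la)}$ need not carry an $\fk l$-module structure; only the $h$-grading survives. Fortunately the $h$-eigenvalue comparison alone suffices once the sign is corrected.
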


\bigskip

\begin{theorem}\label{th:reg}
Every representation $V(a_1,\dots,a_n)$ has $C_{\fk k}(e)$ fixed vectors and the multiplicity is 1.
We write $C_K(\calO):= C_K(e)$.
 In summary, 
$$
\text{Ind}_{C_{{K}}(\calO)^0}  ^{{K}} (Triv)=\bigoplus _{a \in \widehat{{K}} } V(a_1,\dots,a_n). 
$$ 

\end{theorem}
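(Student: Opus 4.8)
The plan is to establish the theorem in two stages. First I would fix the multiplicity of $C_{\fk k}(e)$-fixed vectors in each $V\in\wht K$ working entirely with Lie-algebra modules, completing the argument already begun in the preceding Corollary and the $\ell(w)=1$ Lemma. Then I would transfer this to the group $K$ using the algebraic Peter--Weyl theorem together with the fact that $C_K(\calO)^0$ is connected.

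\emph{Stage one.} For $V=V(a_1,\dots,a_n)\in\wht K$ the goal is $\dim\Hom_{C_{\fk k}(e)}[V,\chi]=1$ with $\chi$ the trivial character; by (\ref{eq:mreal}) this equals $\dim(\ovl{V^*})^{C_{\fk k}(e)_0}$. The preceding Corollary already gives $\le 1$, via the surjection $\ovl{M(\la)}\cong S(\fk n^\perp)\otimes F(\la)\twoheadrightarrow\ovl{V^*}$ and Lemma \ref{le:Sn}, which supply a one-dimensional invariant space in $\ovl{M(\la)}$ surjecting onto $(\ovl{V^*})^{C_{\fk k}(e)_0}$. For $\ge 1$ I would apply the functor of $C_{\fk k}(e)_0$-invariants — exact since $C_{\fk k}(e)_0\cong\fk{sp}(n-2)$ is reductive and all relevant modules are locally finite and completely reducible over it — to the right-exact tail $\bigoplus_{\ell(w)=1}\ovl{M(w\cdot\la)}\xrightarrow{\partial}\ovl{M(\la)}\to\ovl{V^*}\to 0$ of (\ref{eq:coh}). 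This presents $(\ovl{V^*})^{C_{\fk k}(e)_0}$ as the cokernel of $\big(\bigoplus_{\ell(w)=1}\ovl{M(w\cdot\la)}\big)^{C_{\fk k}(e)_0}\to(\ovl{M(\la)})^{C_{\fk k}(e)_0}$. Since $h$ is central in $\fk l=C_{\fk k}(h)_0$, the operator $\ad h$ commutes with $C_{\fk k}(e)_0$ and hence acts by a scalar on each of these one-dimensional invariant spaces: that scalar is $-2\sum_i a_{2i-1}$ on the target (the Corollary), and on each summand of the source it is one of the three values of the $\ell(w)=1$ Lemma, all of which differ from $-2\sum_i a_{2i-1}$. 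As the BGG differential $\partial$ is $\fk k$-equivariant, hence $\ad h$-equivariant, it sends each $\ad h$-eigenspace into the eigenspace of the same eigenvalue, so the induced map on invariants vanishes; its cokernel is all of $(\ovl{M(\la)})^{C_{\fk k}(e)_0}$, giving $\dim\Hom_{C_{\fk k}(e)}[V,\chi]=1$.

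\emph{Stage two.} Since $\mathrm{Triv}$ is the trivial character, $\Ind_{C_K(\calO)^0}^K(\mathrm{Triv})$ is the ring $\bbC[K]^{C_K(\calO)^0}$ of regular functions on $K$ invariant under right translation by $C_K(\calO)^0$, with $K$ acting by left translation. By the algebraic Peter--Weyl theorem $\bbC[K]\cong\bigoplus_{\mu\in\wht K}V_\mu\otimes V_\mu^*$ as a $K\times K$-module, so
\[
\Ind_{C_K(\calO)^0}^K(\mathrm{Triv})\ \cong\ \bigoplus_{\mu\in\wht K}V_\mu\otimes(V_\mu^*)^{C_K(\calO)^0}.
\]
As $C_K(\calO)^0$ is connected with Lie algebra $C_{\fk k}(e)$, invariance under the group is the same as annihilation by the Lie algebra, so $(V_\mu^*)^{C_K(\calO)^0}=(V_\mu^*)^{C_{\fk k}(e)}=\Hom_{C_{\fk k}(e)}[V_\mu^*,\chi]$, which by stage one (applied to $V_\mu^*$, which again runs over $\wht K$) is one-dimensional for every $\mu$. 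Hence $\Ind_{C_K(\calO)^0}^K(\mathrm{Triv})\cong\bigoplus_{\mu\in\wht K}V_\mu=\bigoplus_a V(a_1,\dots,a_n)$; in particular each $V(a_1,\dots,a_n)$ carries a one-dimensional space of $C_{\fk k}(e)$-fixed vectors, which is the assertion of the theorem.

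The one genuinely delicate point is the lower bound in stage one — that the distinguished $\fk{sp}(n-2)$-fixed vector of $\ovl{M(\la)}$ is not swallowed by the image of the BGG differential and so survives in $\ovl{V^*}$. This is exactly where the $\ad h$-eigenvalue computation of the $\ell(w)=1$ Lemma is indispensable: without separating those eigenvalues one cannot exclude a cancellation in the complex (\ref{eq:coh}). The upper bound, and all of stage two, are routine given that computation, the connectedness of $C_K(\calO)^0$, and the triviality of $\chi$.
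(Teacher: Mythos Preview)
Your proposal is correct and follows essentially the same approach as the paper: the upper bound via the surjection $\ovl{M(\la)}\twoheadrightarrow\ovl{V^*}$ and Lemma~\ref{le:Sn}, and the lower bound by separating $\ad h$-eigenvalues on the $C_{\fk k}(e)_0$-invariants in the $\ell(w)=0$ and $\ell(w)=1$ terms of the quotiented BGG complex~(\ref{eq:coh}). The paper states Theorem~\ref{th:reg} without further proof, regarding it as an immediate consequence of the preceding Corollary and the $\ell(w)=1$ Lemma; your Stage~1 simply makes explicit the right-exactness and eigenvalue-matching reasoning behind the paper's remark that ``it is enough to show that these weights do not occur in the term \dots with $\ell(w)=1$,'' and your Stage~2 spells out the routine Peter--Weyl/connectedness passage from Lie-algebra invariants to the induced module, which the paper leaves implicit.
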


Theorem \ref{th:reg} can be
  interpreted as computing regular functions on the universal cover
  $\wti\calO$  of
  $\calO$ transforming trivially under $C_{\fk k}(e)_0$.  
We decompose it further:
\begin{eqnarray}\label{eq:decomp}
R(\tu{\calO}, Triv):= \Ind_{C_{{ K} } (\calO)^0} ^{ K} (Triv) =  \Ind _{C_{{K}} (\calO)} ^{{K}}  \left
  [\Ind_{C_{ {K}} (\calO)^0} ^{  C_{ {K}} (\calO)  } (Triv)
\right ]. 
\end{eqnarray}
The inner induced module splits into
\begin{equation} \label{eq:char}
\Ind_{C_{ {K}} (\calO)^0} ^{  C_{ {K}} (\calO)  } (Triv)=\sum\psi
\end{equation}
where $\psi $ are the irreducible representations of $C_{ K}(\calO)$
trivial on $C_{ K}(\calO)^0.$ Thus, the sum in (\ref{eq:char}) is taken over $\widehat{A_K(\calO)}$.

Then
\begin{equation}\label{sum-reg-fun}
R(\tu{\calO}, Triv) =\text{Ind}_{C_{ K} (\calO)^0} ^{ K}(Triv)=\sum \limits_{\psi \in \widehat{A_K(\calO)}   } R(\calO,\psi).
\end{equation}

We will decompose $R(\calO,\psi)$ explicitly as a representation of
${K}$.

\begin{lemma}

Let $\mu_i$, $1\le i \le 4$, be the following  $K$-types parametrized
by their highest weights:  
\begin{eqnarray*}
&\mu_1= (0, \dots, 0), \mu _2 = (1,0,\dots ,0), \\
&\mu_3= (\frac{1}{2},\dots,\frac{1}{2} ),  \mu_4=
(\frac{1}{2},\dots,\frac{1}{2},-\frac{1}{2} ). 
\end{eqnarray*}
Let  $\psi_i $  be  the restriction of the highest weight of $\mu _i$
to $C_{{K}}(\calO)$, respectively. Then   
\begin{eqnarray*}
\text{Ind}_{C_{{K}}(\calO)^0} ^{  C_{{K}}(\calO)  } (Triv)=\sum \limits_{i=1} ^4 \psi _i.
\end{eqnarray*}

\end{lemma}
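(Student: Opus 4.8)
The plan is to identify $C_K(\calO)/C_K(\calO)^0 = A_K(\calO)$ with a concrete finite abelian 2-group and to show the four restricted characters $\psi_i$ exhaust $\widehat{A_K(\calO)}$, each with multiplicity one. By the Proposition quoted at the end of Section~2, in Case~1 we have $A_K(\calO)\cong\bbZ_2\times\bbZ_2$, so $\widehat{A_K(\calO)}$ has exactly four elements, and $\Ind_{C_K(\calO)^0}^{C_K(\calO)}(Triv)=\sum_{\psi\in\widehat{A_K(\calO)}}\psi$ is a sum of four distinct characters each appearing once. So it suffices to prove that $\psi_1,\dots,\psi_4$ are \emph{well-defined} characters of $C_K(\calO)$ trivial on $C_K(\calO)^0$, and that they are \emph{pairwise distinct}.

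First I would record, using the crucial last sentence of the Proposition, that $C_K(\calO)=Z(K)\cdot C_K(\calO)^0$. Hence any character of $C_K(\calO)$ trivial on $C_K(\calO)^0$ is determined by its restriction to $Z(K)$, and conversely a character of $Z(K)$ that is trivial on $Z(K)\cap C_K(\calO)^0$ extends (uniquely, since the product is almost direct) to such a character. For $K\cong\Spin(2n,\bbC)$ with $n=2p$ even, $Z(K)\cong\bbZ_2\times\bbZ_2$, generated by the images of the two "half-spin" central elements; the four irreducible $K$-types $\mu_1,\dots,\mu_4$ — trivial, vector, and the two half-spin representations — have highest weights whose restrictions to $Z(K)$ realize precisely the four characters of $Z(K)$. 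The content of the lemma is then that \emph{each} of these four central characters is trivial on $Z(K)\cap C_K(\calO)^0$, i.e. descends to $A_K(\calO)$, and that the resulting four characters of $A_K(\calO)$ are all distinct. Distinctness is automatic once we know all four are well-defined, because $|\widehat{A_K(\calO)}|=4$ and the restriction map $\widehat{Z(K)}\to\widehat{A_K(\calO)}$ would then be a bijection of four-element sets; so the real point is well-definedness, equivalently $Z(K)\cap C_K(\calO)^0=\{1\}$, equivalently $Z(K)$ injects into $A_K(\calO)$.

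So the key step is: the natural map $Z(K)\to A_K(\calO)$ is injective (hence, by the Proposition, an isomorphism). I would prove this using the explicit Clifford-algebra model of Section~\ref{ss:clifford} and the explicit representative $e$ from Section~3.1. Concretely, $C_K(\calO)^0$ is generated by $\exp$ of the reductive part $C_{\fk k}(e)_0\cong\fk{sp}(2k)\times\fk{so}(2n-3-4k)$ together with the unipotent radical; in the Clifford/Spin picture one checks that the simply connected group with Lie algebra $C_{\fk k}(e)_0$, together with the unipotent part, meets $Z(K)$ trivially — this is exactly the assertion that the reductive centralizer in $\Spin$ is $\Spin$-simply-connected-on-the-$\fk{sp}$-factor times a spin group on a \emph{odd}-dimensional form, so that the central kernel from $SO$ to $Spin$ is not picked up. Equivalently, one exhibits, for each nontrivial $z\in Z(K)$, a $K$-type among $\mu_1,\dots,\mu_4$ on which $z$ acts by $-1$; since such a $K$-type has a $C_K(\calO)^0$-fixed vector (the vector/half-spin reps are all spherical for $C_K(\calO)^0$ by the computations leading to Theorem~\ref{th:reg}, where every $V(a_1,\dots,a_n)$ was shown to have a $C_{\fk k}(e)$-fixed line), $z\notin C_K(\calO)^0$.

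The main obstacle is the last point — pinning down $C_K(\calO)^0$ precisely enough inside $\Spin$ (as opposed to $SO$) to see that it does not contain the nontrivial central elements, and matching the four central characters with the restrictions of $\mu_1,\dots,\mu_4$. This is genuinely a $\Spin$-versus-$SO$ bookkeeping issue and is where the Clifford-algebra computations of Section~\ref{ss:clifford} (and Corollary~\ref{c:cgp}) do the work; granting those, the lemma follows by counting. Once injectivity $Z(K)\hookrightarrow A_K(\calO)$ is in hand, we conclude: $\psi_1,\dots,\psi_4$ are four well-defined characters of $C_K(\calO)$ trivial on the identity component, they are pairwise distinct because $|\widehat{A_K(\calO)}|=4$, and therefore $\sum_{i=1}^4\psi_i=\sum_{\psi\in\widehat{A_K(\calO)}}\psi=\Ind_{C_K(\calO)^0}^{C_K(\calO)}(Triv)$, which is the claim.
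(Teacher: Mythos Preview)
Your approach is correct and matches the paper's implicit reasoning: the paper states this lemma without proof, treating it as immediate from Corollary~\ref{c:cgp} (which gives $A_{K}(\calO)\cong\bbZ_2\times\bbZ_2$ and $C_K(\calO)=Z(K)\cdot C_K(\calO)^0$) together with the fact that for $n=2p$ one has $Z(K)\cong\bbZ_2\times\bbZ_2$ with its four characters realized exactly by the central characters of $\mu_1,\dots,\mu_4$.

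One simplification: you spend effort establishing \emph{injectivity} of $Z(K)\to A_K(\calO)$ via Clifford computations or via fixed vectors, but this is unnecessary. The identity $C_K(\calO)=Z(K)\cdot C_K(\calO)^0$ already says the map $Z(K)\to A_K(\calO)$ is \emph{surjective}; since both sides have order four, it is an isomorphism, and $Z(K)\cap C_K(\calO)^0=\{1\}$ follows for free. With that in hand, the four central characters of $Z(K)$ (realized by $\mu_1,\dots,\mu_4$) transport to the four characters of $A_K(\calO)$, and the induced representation decomposes as their sum. So your ``main obstacle'' paragraph can be dropped entirely.
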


\begin{prop} \label{p:regfun1}
The induced representation (\ref{sum-reg-fun}) decomposes as 
$$
\text{Ind}   _{C_{ K} (\calO) }  ^{ K} (Triv) =\sum _{i=1} ^4 R(\calO,\psi_i)
$$
where
\begin{eqnarray*}
 R(\calO, \psi _1)&=  \text{Ind} _{C_{ K} (\calO) }  ^{ K}(\psi_1)=\bigoplus V(a_1,\dots,a_n)&\quad \text{ with } a_i\in \bbZ, \ \sum a_i \in 2\bbZ,   \\
R(\calO, \psi _2)&=  \text{Ind} _{C_{ K} (\calO) }  ^{ K} (\psi_2)=\bigoplus V(a_1,\dots,a_n)&\quad \text{ with } a_i\in \bbZ, \sum a_i \in 2\bbZ+1 , \\
 R(\calO, \psi _3)&= \text{Ind} _{C_{ K} (\calO) }  ^{ K}(\psi_3)=\bigoplus V(a_1,\dots,a_n) &\quad \text{ with } a_i\in \bbZ+1/2,\sum a_i \in 2\bbZ+p,\\
 R(\calO, \psi _4)&= \text{Ind} _{C_{ K} (\calO) }  ^{ K} (\psi_4)=\bigoplus V(a_1,\dots,a_n)&\quad \text{ with } a_i\in \bbZ+1/2, \sum a_i \in 2\bbZ+p+1.
\end{eqnarray*}

\end{prop}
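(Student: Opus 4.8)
The plan is to deduce Proposition \ref{p:regfun1} from Theorem \ref{th:reg} together with the decomposition \eqref{sum-reg-fun} and the preceding Lemma, by identifying, for each of the four characters $\psi_i \in \widehat{A_K(\calO)}$, exactly which $K$-types $V(a_1,\dots,a_n)$ occur in $R(\calO,\psi_i)$. Since Theorem \ref{th:reg} already tells us that every $V(a_1,\dots,a_n)$ occurs exactly once in $\Ind_{C_K(\calO)^0}^{K}(\mathrm{Triv})$, and \eqref{sum-reg-fun} says this space is the disjoint sum of the four $R(\calO,\psi_i)$, the only thing to check is the \emph{sorting}: to which $\psi_i$ does a given $V(a_1,\dots,a_n)$ belong. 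This is governed by the restriction of the highest weight of $V$ to $C_K(\calO)$, or equivalently, by the recipe that $\mu$ occurs in $R(\calO,\psi_i)$ iff the character by which $Z(K)\cdot C_K(\calO)^0 = C_K(\calO)$ (using the last sentence of Proposition \ref{c:cgp}, i.e. $C_K(\calO) = Z(K)\cdot C_K(\calO)^0$) acts on the $C_K(\calO)^0$-fixed line in $V^*$ agrees with $\psi_i$.

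The key steps, in order, are as follows. First, since $C_K(\calO) = Z(K)\cdot C_K(\calO)^0$, a character $\psi$ of $C_K(\calO)$ trivial on the connected component is determined by its restriction to the center $Z(K)$ of $K = \Spin(2n,\bbC)$; for $n=2p$ this center is $\bbZ_2\times\bbZ_2$, matching $A_K(\calO)$ in Case 1. I would make explicit the four characters of $Z(K)$: they separate the four "blocks" of $\widehat K$ according to (a) integral versus half-integral coordinates $a_i$, and (b) the parity of $\sum a_i$ (shifted by $p$ in the half-integral case, since the half-sum of positive roots contributes $p$ to the relevant sum). Second, I would compute $\psi_i = \mu_i|_{Z(K)}$ for the four distinguished small $K$-types $\mu_1=(0,\dots,0)$, $\mu_2=(1,0,\dots,0)$, $\mu_3=(\tfrac12,\dots,\tfrac12)$, $\mu_4=(\tfrac12,\dots,\tfrac12,-\tfrac12)$: these are precisely the four characters, since $\mu_1$ is trivial, $\mu_2$ flips the parity of $\sum a_i$ within the integral block, and $\mu_3,\mu_4$ land in the half-integral block with opposite $\sum a_i$ parities. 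Third, since the central character of $V(a_1,\dots,a_n)$ depends only on $(a_i \bmod \bbZ, \sum a_i \bmod 2)$, the four conditions listed in the statement are exactly the preimages of $\psi_1,\dots,\psi_4$ under the map $\widehat K \to \widehat{Z(K)}$. Combining with Theorem \ref{th:reg} (each $V$ occurs once total) and \eqref{sum-reg-fun} (the four $R(\calO,\psi_i)$ partition the total), each $V$ lands in exactly the one $R(\calO,\psi_i)$ dictated by its central character, giving the four displayed decompositions.

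More concretely, I would argue: by Frobenius reciprocity $[\,V(a) : R(\calO,\psi_i)\,] = \dim\Hom_{C_K(\calO)}[V(a)^*,\psi_i]$; by Theorem \ref{th:reg} (really its proof via the BGG computation of Lemma \ref{le:Sn} and its Corollary) the space $\Hom_{C_K(\calO)^0}[V(a)^*,\mathrm{Triv}]$ is one-dimensional, spanned by a vector on which $C_K(\calO) = Z(K)\cdot C_K(\calO)^0$ acts through the central character $\omega_{V(a)^*}$ of $V(a)^*$ restricted to $Z(K)$; hence $[V(a):R(\calO,\psi_i)] = 1$ if $\omega_{V(a)}|_{Z(K)} = \psi_i$ and $0$ otherwise. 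It then remains only to read off the central character of $V(a_1,\dots,a_n)$ for $\Spin(2n)$, $n=2p$, and check it matches the four parity conditions; this is the routine identification I would state but not belabor.

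The main obstacle I anticipate is purely bookkeeping rather than conceptual: one must pin down the two generators of $Z(\Spin(2n,\bbC)) \cong \bbZ_2\times\bbZ_2$ (for $n$ even) and compute their action on each fundamental representation, so as to verify that the "integral vs. half-integral / parity of $\sum a_i$" dichotomy is exactly the decomposition into central characters, and in particular that the shift by $p$ in the half-integral cases ($\sum a_i \in 2\bbZ + p$ versus $2\bbZ + p+1$) is correct — this shift comes from evaluating the spin representations' central characters and is easy to get off by one. A secondary point to be careful about is that we genuinely need $C_K(\calO) = Z(K)\cdot C_K(\calO)^0$ (Proposition \ref{c:cgp}, last line) so that the characters $\psi_i$, defined a priori as restrictions of $K$-highest weights to $C_K(\calO)$, are in fact detected on the center; once that is in hand the proof is a short assembly of already-proved facts.
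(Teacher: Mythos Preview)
Your proposal is correct and is exactly the argument the paper has in mind: the paper states Proposition \ref{p:regfun1} without proof, relying on the preceding Lemma (defining the $\psi_i$ as restrictions of the $\mu_i$ to $C_K(\calO)$), Theorem \ref{th:reg}, equation \eqref{sum-reg-fun}, and the fact $C_K(\calO)=Z(K)\cdot C_K(\calO)^0$ from Corollary \ref{c:cgp}, so that the sorting of $K$-types into the four summands is by central character. Your write-up simply makes this explicit, and the bookkeeping caveats you flag (the $\bbZ_2\times\bbZ_2$ structure of $Z(\Spin(4p,\bbC))$ and the shift by $p$ in the half-integral cases) are the only places requiring care.
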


\subsection{Case 2} $\calO=[3\ 2^{2k} \ 1^{2n-4k-3}],$ $0\le k \le p-1$.

\subsubsection*{} Consider the parabolic $\fk p = \fk l +\fk n$
determined by $h$: 
\begin{equation*}
\begin{aligned}
\fk l &= C_{\fk k}(h)_0 \cong \fk{gl}(1)\times \fk{gl}(2k)\times
{\fk{so}(2n-2-4k)}, \\ 
\fk n &= C_{\fk k}(h)^+.
\end{aligned}
\end{equation*}
In this section, let $\ep=-1$ when $n$ is even; $\ep=1$ when $n$ is
odd. The dual of $V,$ denoted  $V^*$, has lowest weight $(\ep
a_n,-a_{n-1},\dots, -a_2, -a_1)$. It is therefore a quotient of a
generalized Verma module 
$M(\la)=U(\fk k)\otimes _{U(\overline{\fk p} )}  F(\la)$, where $\la$
 is dominant for $\overline{\fk p}$, and dominant for the standard
 positive system for $\fk l:$
$$
{\la = (-a_1; \underset{2k}{\underbrace{ -a_{2k+1} , \dots, -a_3,
    -a_2}}; \underset{n-1-2k}{\underbrace{ a_{2k+2}, \dots, a_{n-1}, \ep a_n}}).}
$$ 
$\fk n =C_{\fk k}(e)^+\oplus \fk n ^{\perp}$ as a module for $C_{\fk
  k}(e)_0$. A basis for $\fk n^\perp\subset C_\fk k(h)_1$ is given by 
$$
{ \{ X(\ep_1{ -} \ep _{2k+2}) \}},
\quad
2\le i\le 2k+1.  
$$
This is the standard representation of
$\fk{sp}(2k),$ trivial for {$\fk{so}(2n-4-4k).$} We write {its highest weight as}
$$
{(1;0,\dots ,0,-1;0,\dots ,0)}.
$$
We can now repeat the argument for the case $k=p;$ there is an added
constraint that ${a_{2k+3}}=\dots =a_n=0$ because the representation with highest weight $(a_{2k+2},\dots ,a_{n-1},\ep a_n)$ of $\fk{so}(2n-2-4k)$ must have fixed vectors for ${\fk{so}(2n-3-4k)}.$

Then the next theorem follows.

\begin{theorem}

A representation $V(a_1,\dots,a_n)$ has $C_{\fk k }(e)$ fixed vectors if and only if $$a_{2k+3}=\dots=a_n=0,$$ and the multiplicity is 1. 
In summary, 
$$
\text{Ind}_{C_{ K}(\calO)^0}^{ K} (Triv)=\bigoplus V(a_1,\dots, a_{2k+2},0\dots,0), \quad \text{ with } a_1\ge \dots \ge a_{2k+2}\ge 0, \ a_i\in \bbZ.
$$
\end{theorem}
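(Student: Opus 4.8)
The plan is to transport the entire Case~1 argument to this setting; the only new phenomenon is that the reductive part of the centralizer,
$$
C_{\fk k}(e)_0\cong\fk{sp}(2k)\times\fk{so}(2n-3-4k),
$$
is now a \emph{product} of two classical factors, so the (non)vanishing of $\chi$-fixed vectors decouples into one condition per factor. First I would realize $V^*$ as a quotient of the generalized Verma module $M(\la)=U(\fk k)\otimes_{U(\overline{\fk p})}F(\la)$ attached to $\fk p=\fk l+\fk n$ and to the $\fk l$-dominant, $\overline{\fk p}$-dominant weight $\la$ displayed above, and write down the generalized BGG resolution (\ref{eq:bgg:cx}). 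As in Case~1 this is a free $C_{\fk k}(e)^+$-resolution, so applying $\,\ovl{(\cdot)}=(\cdot)/[C_{\fk k}(e)^+(\cdot)]\,$ produces the complex (\ref{eq:coh}) computing the cohomology; using $\fk n=C_{\fk k}(e)^+\oplus\fk n^\perp$ with $\fk n^\perp=\mathrm{Span}\{X(\ep_1-\ep_i):2\le i\le 2k+1\}$ the standard $\fk{sp}(2k)$-module (and trivial under the $\fk{so}$-factor, since it involves only $\ep_1,\dots,\ep_{2k+1}$), one gets $\ovl{M(w\cdot\la)}\cong S(\fk n^\perp)\otimes_{\bbC}F(w\cdot\la)$ as an $\fk l$-module, so the problem reduces to counting $\bigl(\fk{sp}(2k)\times\fk{so}(2n-3-4k)\bigr)$-invariants in these modules, starting with the leading term $\ovl{M(\la)}=S(\fk n^\perp)\otimes F(\la)$.

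For the $\fk{sp}(2k)$-factor the situation is verbatim Lemma~\ref{le:Sn}: $(\fk{gl}(2k),\fk{sp}(2k))$ is a hermitian symmetric pair, $S(\fk n^\perp)$ has $\fk l$-highest weight $(1;0,\dots,0,-1;0,\dots,0)$, so Helgason's theorem forces the $\fk{gl}(2k)$-weights of any constituent with an $\fk{sp}(2k)$-fixed vector to pair up, and the Littlewood--Richardson bounds $0\le k_i\le\alpha_{i-1}-\alpha_i$ then single out a unique contributing term with multiplicity~$1$, exactly as in the corollary following Lemma~\ref{le:Sn}. The new input is the $\fk{so}(2n-3-4k)$-factor: since $S(\fk n^\perp)$ is $\fk{so}(2n-3-4k)$-trivial, $\ovl{M(\la)}$ has an $\fk{so}(2n-3-4k)$-invariant iff the $\fk{so}(2n-2-4k)$-module $F(a_{2k+2},\dots,a_{n-1},\ep a_n)$, restricted to $\fk{so}(2n-3-4k)$, contains the trivial representation. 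By the classical $\mathrm{SO}(2m)\downarrow\mathrm{SO}(2m-1)$ branching rule (the interlacing inequalities $b_1\ge c_1\ge b_2\ge\cdots\ge b_{m-1}\ge c_{m-1}\ge|b_m|$), the trivial representation of $\fk{so}(2m-1)$ occurs in $F(b_1,\dots,b_m)$ precisely when $b_2=\cdots=b_m=0$, and then with multiplicity~$1$; here $(b_1,\dots,b_m)=(a_{2k+2},\dots,a_{n-1},\ep a_n)$, so the constraint is exactly $a_{2k+3}=\cdots=a_n=0$ (which also forces $\ep a_n=0$). Combining the two factors, $\ovl{M(\la)}$ carries a one-dimensional $C_{\fk k}(e)_0$-invariant subspace when $a_{2k+3}=\cdots=a_n=0$ and none otherwise; via the surjection $\ovl{M(\la)}\twoheadrightarrow\ovl{V^*}$ and the identity $\Hom_{C_{\fk k}(e)}[V^*,\chi]=(\ovl{V^*})^\chi$ of (\ref{eq:mreal}) this already gives the ``only if'' direction and the bound $[\,V(a_1,\dots,a_n):\chi\,]\le 1$ in all cases.

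To finish the ``if'' direction I would reproduce the $\ad h$-eigenvalue separation argument used for the $\ell(w)=1$ terms in Case~1: when $a_{2k+3}=\cdots=a_n=0$ the $\fk{so}$-part contributes only its unique spherical vector and is inert, so the analysis is essentially that of Case~1; for each length-one coset representative $w\in W^+$ one computes $w\cdot\la$, lists the $\bigl(\fk{sp}(2k)\times\fk{so}(2n-3-4k)\bigr)$-fixed weights of $S(\fk n^\perp)\otimes F(w\cdot\la)$, and checks that their $\ad h$-eigenvalues all differ from the value $-2\sum_{1\le i\le k+1}a_{2i-1}$ carried by the distinguished invariant of $\ovl{M(\la)}$. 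This shows that invariant is not in the image of the $\ell(w)=1$ part of (\ref{eq:coh}), hence survives in cohomology with multiplicity exactly $1$. Combining the two directions gives the asserted ``if and only if'' with multiplicity~$1$, and summing over $\widehat{K}$ yields the stated decomposition of $\text{Ind}_{C_K(\calO)^0}^{K}(Triv)$. The step I expect to be the main obstacle is this last survival check: one must verify that enlarging the $\fk{so}$-factor (which, unlike Case~1, absorbs the fork of the $D_n$ diagram and so changes the set of relevant length-one coset representatives) does not create new coincidences among the $\ad h$-eigenvalues of the higher terms of the resolution; the remaining bookkeeping — the sign $\ep$ and the dominance normalizations — is routine but must be kept uniform in the parity of $n$.
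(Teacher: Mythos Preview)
Your proposal is correct and follows exactly the paper's approach: the paper's entire proof for this theorem is the sentence ``We can now repeat the argument for the case $k=p$; there is an added constraint that $a_{2k+3}=\dots=a_n=0$ because the representation with highest weight $(a_{2k+2},\dots,a_{n-1},\ep a_n)$ of $\fk{so}(2n-2-4k)$ must have fixed vectors for $\fk{so}(2n-3-4k)$.'' You have correctly identified both the transported Case~1 machinery (BGG resolution, $\fk n^\perp$ as the standard $\fk{sp}(2k)$-module, Helgason/Littlewood--Richardson for the symplectic factor) and the single new ingredient, namely the $\fk{so}(2n-2-4k)\downarrow\fk{so}(2n-3-4k)$ branching constraint; your flagged concern about the changed set of length-one coset representatives when the $\fk{so}$-factor absorbs the fork is exactly the detail the paper suppresses under ``repeat the argument.''
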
 

As in (\ref{sum-reg-fun}), we decompose $\text{Ind}_{C_{ K}(\calO)^0}^{ K} (Triv)$ further in to sum of $R(\calO,\psi)$ with $\psi\in\widehat{A_K(\calO)}$.

\begin{lemma}

Let $\mu_1, \mu_2 $  be the following  ${K}$-types parametrized by
their highest weights:  
\begin{eqnarray*}
\mu_1= (0, \dots, 0), \mu _2 = (1,0,\dots ,0). 
\end{eqnarray*}
Let  $\psi_i $  be  the restriction of the highest weight of $\mu _i$
to $C_G(\calO)$, respectively. Then   
\begin{eqnarray*}
\text{Ind}_{C_{ K} (\calO)^0} ^{  C_{ K}(\calO)  } (Triv)=\psi_1 +\psi _2.
\end{eqnarray*}

\end{lemma}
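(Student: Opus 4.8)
The plan is to compute the restriction to $C_{K}(\calO)$ of the highest weights of $\mu_1$ and $\mu_2$ and to verify that these two characters are precisely the characters of $C_{K}(\calO)$ that are trivial on $C_{K}(\calO)^0$. First I would invoke the structural fact from Proposition \ref{p:regfun1}'s analogue, namely that $C_{K}(\calO) = Z(K)\cdot C_{K}(\calO)^0$ (the last assertion of the Proposition in Section 2), so that a character $\psi$ of $C_{K}(\calO)$ trivial on $C_{K}(\calO)^0$ is determined by its restriction to the center $Z(K)$ of $K=\Spin(2n,\bbC)$. Since $A_{K}(\calO)\cong\bbZ_2$ in Case 2, there are exactly two such characters, and it suffices to exhibit two distinct ones arising as restrictions of $K$-types.

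Next I would identify $Z(K)$ explicitly. For $\Spin(2n,\bbC)$ the center is $\bbZ_2\times\bbZ_2$ when $n$ is even and $\bbZ_4$ when $n$ is odd; a $K$-type $V(a_1,\dots,a_n)$ restricts to a character of $Z(K)$ depending only on whether the $a_i$ are integers or half-integers and on $\sum a_i \bmod 2$. For $\mu_1=(0,\dots,0)$ the $a_i$ are integers with $\sum a_i=0$, and for $\mu_2=(1,0,\dots,0)$ the $a_i$ are integers with $\sum a_i = 1$. Thus $\psi_1$ and $\psi_2$ differ already on the element of $Z(K)$ detecting the parity of $\sum a_i$, so they are distinct characters of $C_{K}(\calO)$. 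Both are trivial on $C_{K}(\calO)^0$ because the $K$-types $\mu_1,\mu_2$ both occur in $\Ind_{C_{K}(\calO)^0}^{K}(Triv)$ by the preceding Theorem (they have $a_{2k+3}=\dots=a_n=0$), hence their highest-weight lines are fixed by $C_{\fk k}(e)_0$, i.e. $\psi_i$ is trivial on $C_{K}(\calO)^0$. Since $|\widehat{A_K(\calO)}|=2$ and we have produced two distinct elements, the equality $\Ind_{C_{K}(\calO)^0}^{C_{K}(\calO)}(Triv)=\psi_1+\psi_2$ follows from Frobenius reciprocity: each $\psi_i$ occurs with multiplicity $1$ in the induced character since the $\psi_i$ are characters and the index $[C_{K}(\calO):C_{K}(\calO)^0]=2$.

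The main obstacle I anticipate is bookkeeping with the double cover: one must be careful that the relevant element of $C_K(\calO)$ witnessing the nontrivial class of $A_K(\calO)$ really lies in $Z(K)$ and really distinguishes $\mu_1$ from $\mu_2$ — in other words, that the surjection $Z(K)\to A_K(\calO)$ is detected by the parity of $\sum a_i$ rather than by the integer-versus-half-integer dichotomy (the latter would be the relevant invariant if the nontrivial class came from the other $\bbZ_2$ factor of the center). This is exactly the content of the explicit Clifford-algebra computation of $C_K(\calO)=C_K(e)$ promised in Section \ref{ss:clifford}; granting that, the argument above is routine. One should also double-check the degenerate sub-case $2n-4k-3=1$ (where $A_K(\calO)$ jumps to $\bbZ_2\times\bbZ_2$, i.e. Case 1), but that is excluded here by the standing hypothesis $0\le k\le p-1$ together with $n=2p$ or $2p+1$ forcing $2n-4k-3>1$ in Case 2.
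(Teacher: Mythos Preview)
The paper states this lemma without proof, relying on the structural facts about $C_K(\calO)$ established in Section~\ref{ss:clifford}. Your argument is essentially correct and supplies what the paper leaves implicit: once one knows $C_K(\calO)=Z(K)\cdot C_K(\calO)^0$ and $|A_K(\calO)|=2$, it suffices to produce two central characters distinguished by $\Ep_{2n}$ and trivial on $-I$, which is exactly what $\mu_1,\mu_2$ give.

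Two small points deserve tightening. First, your justification that $\psi_i$ is trivial on $C_K(\calO)^0$ via ``their highest-weight lines are fixed by $C_{\fk k}(e)_0$'' is not correct as written: the $C_K(\calO)^0$-fixed vector in $V(\mu_i)$ guaranteed by the preceding Theorem is not the highest-weight vector in general. The clean argument bypasses this entirely: $Z(K)$ acts by a scalar (the central character, determined by $\mu_i$) on all of $V(\mu_i)$, so its action on \emph{any} line agrees with $\mu_i|_{Z(K)}$. What must be checked is that this character is trivial on $Z(K)\cap C_K(\calO)^0=\{\pm I\}$; this holds because $\mu_1,\mu_2$ have integer entries and $-I\in C_K(\calO)^0$ by the $r_1=2n-4k-3>1$ argument in Section~\ref{ss:clifford}. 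That same computation shows $\Ep_{2n}\notin C_K(\calO)^0$, confirming your worry that the relevant invariant is the parity of $\sum a_i$ rather than the integer/half-integer dichotomy.

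Second, your final parenthetical is slightly off: for $n=2p$ and $k=p-1$ one gets $2n-4k-3=1$, so the range $0\le k\le p-1$ alone does \emph{not} force $2n-4k-3>1$. Rather, the case $n=2p,\ k=p-1$ is precisely Case~1 and is excluded from Case~2 by the explicit hypothesis $2n-4k-3>1$ recorded in the Proposition of Section~2.
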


\begin{prop}\label{p:regfun2}
The induced representation (\ref{sum-reg-fun}) decomposes as 
$$
\text{Ind}   _{C_{ K} (\calO) ^0}  ^{ K} (Triv) = R(\calO,\psi_1)  + R(\calO,\psi_2) 
$$
where
\begin{eqnarray*}
 R(\calO, \psi _1)&=  \text{Ind} _{C_{K} (\calO) }  ^{ K}(\psi_1)=\bigoplus V(a_1,\dots,a_{2k+2},0,\dots,0)&\quad \text{ with } a_i\in \bbZ, \ \sum a_i \in 2\bbZ,   \\
R(\calO, \psi _2)&=  \text{Ind} _{C_{ K} (\calO) }  ^{ K} (\psi_2)=\bigoplus V(a_1,\dots,a_{2k+2},0,\dots,0)&\quad \text{ with } a_i\in \bbZ, \sum a_i \in 2\bbZ+1. \\
\end{eqnarray*}

\end{prop}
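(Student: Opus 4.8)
The plan is to mimic exactly the decomposition argument already carried out for Case 1 (Proposition \ref{p:regfun1}), now in the setting of Case 2 where the group $A_K(\calO)\cong\bbZ_2$, so there are only two characters $\psi_1,\psi_2$ of $C_K(\calO)$ trivial on $C_K(\calO)^0$. By the preceding theorem we already know
$$
\Ind_{C_K(\calO)^0}^{K}(\mathrm{Triv})=\bigoplus_{a_1\ge\cdots\ge a_{2k+2}\ge 0,\ a_i\in\bbZ} V(a_1,\dots,a_{2k+2},0,\dots,0),
$$
each with multiplicity one, and by \eqref{eq:decomp}--\eqref{sum-reg-fun} the left side is $R(\calO,\psi_1)\oplus R(\calO,\psi_2)$. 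So the only task is to identify, for each $K$-type $V=V(a_1,\dots,a_{2k+2},0,\dots,0)$ occurring, which of the two characters $\psi_i$ its $C_K(\calO)$-fixed line transforms by.

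First I would recall from Section \ref{ss:clifford} (the content behind Proposition \ref{p:regfun2}'s citation) the explicit structure $C_K(\calO)=Z(K)\cdot C_K(\calO)^0$ for Case 2. Since $C_K(\calO)^0$ acts trivially on the fixed line, the character $\psi$ by which $V^{C_K(\calO)}$ transforms is determined by the action of $Z(K)$ alone, i.e.\ by the action of the central element of $\Spin(2n)$. For an integral $K$-type $V(a_1,\dots,a_n)$ with $a_i\in\bbZ$ (these are exactly the ones occurring, by the theorem), the nontrivial central element $z$ of $Z(K)$ acts by the scalar $(-1)^{\sum a_i}$. Hence $V^{C_K(\calO)}$ lies in $R(\calO,\psi_1)$ exactly when $z$ acts trivially, i.e.\ when $\sum a_i\in 2\bbZ$, and in $R(\calO,\psi_2)$ when $\sum a_i\in 2\bbZ+1$. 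This matches the normalization in the preceding Lemma, where $\psi_1,\psi_2$ are the restrictions of the highest weights of $\mu_1=(0,\dots,0)$ and $\mu_2=(1,0,\dots,0)$: indeed $\mu_1$ has $\sum a_i=0$ even and $\mu_2$ has $\sum a_i=1$ odd, so the labeling is consistent.

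The key steps, in order, are: (i) invoke the preceding theorem to get the full list of $K$-types in $\Ind_{C_K(\calO)^0}^K(\mathrm{Triv})$ with multiplicity one; (ii) use $C_K(\calO)=Z(K)\cdot C_K(\calO)^0$ to reduce the computation of $\psi$ to the action of $Z(K)$; (iii) compute that action on $V(a_1,\dots,a_n)$ as the scalar $(-1)^{\sum a_i}$, and check against the highest weights of $\mu_1,\mu_2$ to fix which parity is $\psi_1$ and which is $\psi_2$; (iv) split the direct sum accordingly. I expect step (iii) — pinning down precisely how the central element of $\Spin(2n)$ acts on an integral highest-weight module, and confirming the sign convention agrees with the $\mu_i$ normalization of the Lemma — to be the only real point requiring care; the rest is bookkeeping identical to Case 1. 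Everything else is formal, given \eqref{eq:decomp}--\eqref{sum-reg-fun} and the structural statement for $C_K(\calO)$ from Section \ref{ss:clifford}.
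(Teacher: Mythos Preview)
Your proposal is correct and is exactly the argument the paper has in mind: the paper states Proposition~\ref{p:regfun2} without proof, relying on the preceding theorem, the lemma identifying $\psi_1,\psi_2$ via $\mu_1,\mu_2$, and the structural fact $C_K(\calO)=Z(K)\cdot C_K(\calO)^0$ from Corollary~\ref{c:cgp}. One small sharpening for step~(iii): $Z(K)$ has three nontrivial elements, so you should say explicitly that the relevant generator of $A_K(\calO)$ is (the image of) $\Ep_{2n}$, since $-I\in C_K(\calO)^0$ here because $r_1=2n-4k-3>1$; then $\Ep_{2n}$ covers $-I\in SO(2n)$ and hence acts on an integral $V(a_1,\dots,a_n)$ by $(-1)^{\sum a_i}$, which is the computation you want.
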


\subsection{}
Now we treat $\calO=[2^{2k} \ 1^{2n-4k}]$ with $0\le k\le p$. These are Cases 3 and 4. When $k=p$ (and hence $n=2p$), the orbit 
is labeled by $I,II$. The computation is similar and easier than the previous two cases. We state the results for $R(\tu{O},Triv)$ as follows.

\begin{theorem}\

\begin{description}
\item[Case 3] For $k=p$, so $n=2p,$ 
$$
\begin{aligned} &\calO _I =[ 2^n]_I,\qquad &R(\tu{\calO_I},Triv)= \text{Ind}_{C_{ K}(\calO_{I}) ^0} ^{ K}(Triv)=\bigoplus V(a_1, a_1, a_3, a_3,
\dots, a_{n-1}, a_{n-1}),\\
&\calO _{II}=[ 2^n]_{II},\qquad &R(\tu{\calO_{II}},Triv)=\text{Ind}_{C_{ K}(\calO_{II}) ^0} ^{ K}(Triv)=\bigoplus V(a_1, a_1, a_3, a_3, \dots,
 a_{n-1}, -a_{n-1}).
 \end{aligned}
 $$
\item[Case 4]  
{For $k\le p-1$}, 
$$
\calO= [2^{2k}\ 1^{2n-4k}],\qquad
R(\tu{\calO},Triv)=\text{Ind}_{C_{ K}(\calO) ^0} ^{ K}(Triv)= {\bigoplus V(a_1, a_1, a_3, a_3, \dots, a_{2k-1}, a_{2k-1}, 0,\dots, 0)},
$$
satisfying $a_1\ge a_3\ge \dots \ge a_{2k-1}\ge 0$.
\end{description}
\begin{proof}
 We treat the case $n=2p$ and ${k\le p-1};$ $n=2p+1$ is similar.  {A representative of $\calO$ is}
 $e=X(\ep_1+\ep_{2})+\dots +X(\ep_{2k-1}+\ep_{2k})$, and the corresponding middle element in the  Lie
 triple is 
$h=H(\underset{2k}{\underbrace{{1,\dots
      ,1}}},\underset{n-2k}{\underbrace{{0,\dots ,0}}})$.  
Thus
\begin{equation}
  \label{eq:ch_i}
  \begin{aligned}
&C_{\fk k}(h)_0=\fk{gl}({2k})\times \fk{so}(2n-{4k})\\
&C_{\fk k}(h)_1=Span\{ X(\ep_i\pm \ep_j)\}\qquad {1\le   i  \le 2k< j\le n},\\
&C_{\fk k}(h)_2=Span\{ X(\ep_l+\ep_m)\}\qquad 1\le l\ne m\le {2k}.     
  \end{aligned}
\end{equation}
and
\begin{equation}
   \begin{aligned}
&C_{\fk k}(e)_0=\fk{sp}(2k)\times \fk{so}(2n-4k)\\
&C_{\fk k}(e)_1=C_{\fk k}(h)_1,\\
&C_{\fk k}(e)_2=C_{\fk k}(h)_2.
  \end{aligned} \label{eq:cei}
\end{equation}
As before, let $\fk p=\fk l+\fk n$ be the parabolic subalgebra
determined by $h,$ and $V=V(a_1,\dots ,a_n)$ be an irreducible
representation of $K$. Since we assumed $n=2p,$  $V=V^*.$ 
In this case $C_{\fk k}(e)^+=\fk n,$  so 
Kostant's theorem implies $V/[C_{\fk k}(e)^+V]=V_{\fk l}(a_1,\dots
a_{2k};a_{2k+1},\dots ,a_n)$ as a $\fk{gl}(2k)\times \fk{so}(2n-4k)$-module. 
Since we want $\fk{sp}(2k)\times
\fk{so}(2n-4k)$-fixed vectors, $a_{2k+1}=\dots =a_n=0,$ and Helgason's
theorem implies $a_1=a_2,a_3=a_4,\dots ,a_{2k-1}=a_{2k}$.

\medskip
{When  $n=2p$, and $\calO=[2^n]_{I,II}$, the calculations are similar to  $k\le p-1.$ The choices $I,II$ are
 $$
 \begin{aligned}
& e_I=X(\ep_1-\ep_2)+X(\ep_3-\ep_4)+\dots+X(\ep_{n-1}-\ep_n)\quad &&h_I=H(1,\dots,1),\\   
&e_{II}=X(\ep_1-\ep_2)+X(\ep_3-\ep_4)+\dots +X(\ep_{n-3}-\ep_{n-2})+X(\ep_{n-1}+\ep_n),\quad
&&h_{II}=H(1,\dots,1, -1).
\end{aligned}
$$  
These orbits are induced from the two nonconjugate maximal parabolic subalgebras with $\fk{gl}(n)$ as Levi  components, and $R(\tu{\calO_{I,II}},Triv)$ are just the induced modules from the trivial representation on the Levi component. }
\end{proof}
\end{theorem}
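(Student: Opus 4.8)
The plan is to treat the two families $\calO = [2^{2k}\,1^{2n-4k}]$ ($k\le p-1$) and $\calO = [2^n]_{I,II}$ (the very even case $k=p$) by the same weighted-filtration method used in Cases 1 and 2, but now the computation is genuinely shorter because $C_{\fk k}(e)^+ = \fk n$, so no Bernstein--Gelfand--Gelfand resolution or $\fk n^\perp$-complement is needed: Kostant's theorem directly computes $V/[C_{\fk k}(e)^+V]$. The two inputs to assemble are (i) the explicit Lie triple $\{e,h,f\}$ and the resulting $\ad h$-eigenspace decomposition of $\fk k$, and (ii) the branching rule for the symmetric pairs $(\fk{gl}(2k),\fk{sp}(2k))$ and $(\fk{so}(2m),\fk{so}(2m-1))$.

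First I would record representatives: for $k\le p-1$ take $e = \sum_{j=1}^{k} X(\ep_{2j-1}+\ep_{2j})$ with $h = H(1,\dots,1,0,\dots,0)$ ($2k$ ones), giving $C_{\fk k}(h)_0 = \fk{gl}(2k)\times\fk{so}(2n-4k)$, $C_{\fk k}(e)_0 = \fk{sp}(2k)\times\fk{so}(2n-4k)$, and $C_{\fk k}(e)_i = C_{\fk k}(h)_i$ for $i>0$, so that $C_{\fk k}(e)^+ = C_{\fk k}(h)^+ = \fk n$. Then $\Hom_{C_{\fk k}(e)}[V^*,\chi] = \big(V^*/(C_{\fk k}(e)^+V^*)\big)^\chi$, and by Kostant's theorem (applied to the parabolic $\fk p = \fk l + \fk n$ with $\fk l = C_{\fk k}(h)_0$) the quotient $V^*/(\fk n V^*)$ is the single irreducible $\fk l$-module $F_{\fk l}(a_1,\dots,a_{2k}\,;\,a_{2k+1},\dots,a_n)$ (up to the mild self-duality bookkeeping $V\cong V^*$ when $n=2p$). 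Next I would extract $\fk{sp}(2k)\times\fk{so}(2n-4k)$-fixed vectors: Helgason's theorem for $(\fk{gl}(2k),\fk{sp}(2k))$ forces the $\fk{gl}(2k)$-highest weight to be of the form $(a_1,a_1,a_3,a_3,\dots,a_{2k-1},a_{2k-1})$, and the spherical condition for $(\fk{so}(2n-4k),\fk{so}(2n-4k-1))$ — more precisely, that the $\fk{so}(2n-4k)$-factor have an $\fk{so}(2n-4k-1)$-fixed vector — forces $a_{2k+1}=\dots=a_n=0$. The multiplicity is $1$ in each step, giving the stated list for Case 4; the $n=2p+1$ subcase is identical with the obvious shift.

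For Case 3 (the very even orbits $[2^n]_{I,II}$, $n=2p$), I would instead observe that $e_I = \sum_{j=1}^{p} X(\ep_{2j-1}-\ep_{2j})$, $h_I = H(1,\dots,1)$, so $C_{\fk k}(h)_0 = \fk{gl}(n)$ and $C_{\fk k}(e)^+ = \fk n$; Kostant's theorem gives $V/(\fk n V) = F_{\fk{gl}(n)}(a_1,\dots,a_n)$, and the condition to have $C_{\fk k}(e)_0 = \fk{gl}(n)$-fixed… rather, since here $C_{\fk k}(e)_0 = \fk{gl}(n) = \fk l$ itself, the correct statement is that these orbits are Richardson orbits induced from the two non-conjugate maximal parabolics with Levi $\fk{gl}(n)$, and $R(\tu{\calO_{I,II}},\mathrm{Triv})$ is literally $\Ind$ of the trivial character of that Levi — so one reads off $\bigoplus V(a_1,a_1,\dots,a_{n-1},a_{n-1})$ for $I$ and $\bigoplus V(a_1,a_1,\dots,a_{n-1},-a_{n-1})$ for $II$, the sign on the last coordinate tracking which of the two $\fk{gl}(n)$-conjugacy classes (equivalently, which outer form) one induces from. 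The main obstacle I anticipate is bookkeeping the $\bbZ_2$-worth of subtleties that distinguish $I$ from $II$: making precise that the two choices of $h$ (namely $H(1,\dots,1)$ versus $H(1,\dots,1,-1)$) land the $\fk{gl}(n)$-highest weight of the Kostant quotient in the two different orbits of the outer automorphism, so that the $\fk{sp}$-type fixed-vector condition on the Levi side produces the flipped sign in the last coordinate — everything else is a routine application of Kostant plus Helgason.
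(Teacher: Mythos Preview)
Your overall strategy is the same as the paper's: in Case~4 observe that $C_{\fk k}(e)^+=\fk n$, apply Kostant's theorem to identify $V/\fk n V$ as a single irreducible $\fk l$-module, then impose the fixed-vector condition for $C_{\fk k}(e)_0=\fk{sp}(2k)\times\fk{so}(2n-4k)$; in Case~3 recognize the orbits as Richardson for the two $\fk{gl}(n)$-parabolics and read off the induced module. That is exactly what the paper does.

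There is, however, a slip in your Case~4 argument worth fixing. You correctly record $C_{\fk k}(e)_0=\fk{sp}(2k)\times\fk{so}(2n-4k)$, but then you invoke ``the spherical condition for $(\fk{so}(2n-4k),\fk{so}(2n-4k-1))$'' to force $a_{2k+1}=\dots=a_n=0$. That pair is irrelevant here: unlike Case~2 (where the centralizer contains only a \emph{proper} odd orthogonal subalgebra of the Levi's even orthogonal factor), in Case~4 the centralizer contains the \emph{entire} $\fk{so}(2n-4k)$ factor of $\fk l$. So the condition on the $\fk{so}(2n-4k)$-piece is simply that the module be trivial, which immediately gives $a_{2k+1}=\dots=a_n=0$. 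If one literally imposed only an $\fk{so}(2n-4k-1)$-fixed-vector condition, the coordinate $a_{2k+1}$ would remain free and the answer would be wrong. Similarly, in your Case~3 discussion the assertion $C_{\fk k}(e)_0=\fk{gl}(n)$ is incorrect (it is $\fk{sp}(n)$, as in all the $[2^{2k}\,\cdots]$ cases); your subsequent pivot to the Richardson/induced-module description is correct and is what the paper uses, but if you instead carry through the Kostant--Helgason computation directly, it is again the pair $(\fk{gl}(n),\fk{sp}(n))$ that yields $a_1=a_2,\dots,a_{n-1}=\pm a_n$.
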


 We aim at decomposing $R(\tu{\calO},Triv) =\sum R(\calO,\psi)$ with $\psi\in \widehat{A_K(\calO)}$ as before.

\begin{lemma}\
\begin{description}

\item[Case 3] { $n=2p$, $\calO = [2^n]_{I,II}$}. 
Let $\mu_1$, $\mu_2$, $\nu_1$, $\nu_2$, be:  
\begin{eqnarray*}
&\mu_1= (1, \dots, 1), \mu _2 = (\frac{1}{2},\dots\frac12), \\
&\nu_1= (1,\dots,1,-1),  \nu_2=
(\frac{1}{2},\dots,\frac{1}{2},-\frac{1}{2} ). 
\end{eqnarray*}
Let  $\psi_i $  be  the restriction of the highest weight of $\mu _i$
to $C_K(e)$, and $\phi _i$ be the restriction of the highest weight of $\nu_i$, respectively. Then   
\begin{eqnarray*}
\text{Ind}_{C_K (\calO_I)^0} ^{  C_K (\calO_I)  } (Triv)&=&\psi_1 +\psi _2,\\
\text{Ind}_{C_K (\calO_{II})^0} ^{  C_K (\calO_{II})  } (Triv)&=&\phi_1 +\phi _2.
\end{eqnarray*}
The  $\psi_i,\phi_i$ are  viewed as 
representations of $\widehat{A_K(\calO_{I,II})}$, and  $\psi_1$ and $\phi_1$ are $Triv,$ $\psi_2,\phi_2$ are $Sgn.$

\medskip

\item[Case 4] { $\calO=[2^{2k}\ 1^{2n-4k}]$, $0\le k \le p-1$}.
\begin{eqnarray*}
\text{Ind}_{C_K (\calO)^0} ^{  C_K(\calO)  } (Triv)=Triv.
\end{eqnarray*}

\end{description}

\end{lemma}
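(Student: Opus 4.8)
The plan is to compute the inner induced modules $\Ind_{C_K(\calO)^0}^{C_K(\calO)}(\mathrm{Triv})$ by exploiting the structure result from Proposition \ref{p:regfun1}'s preliminaries, namely that $C_K(\calO) = Z(K)\cdot C_K(\calO)^0$ in all cases (the final clause of the Proposition in Section 2). This reduces the problem to understanding how the center $Z(K)$ of $\Spin(2n,\bbC)$ — which is $\bbZ_4$ when $n$ is odd and $\bbZ_2\times\bbZ_2$ when $n$ is even — intersects $C_K(\calO)^0$, and hence to identifying $A_K(\calO)$ with a quotient of $Z(K)$. Since $Z(K)$ acts by scalars on each irreducible $V(a_1,\dots,a_n)\in\wht K$ via a character depending only on the congruence class of the $a_i$ (integer vs. half-integer) and on $\sum a_i \bmod 2$, the characters $\psi\in\wht{A_K(\calO)}$ are read off from these scalars. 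Concretely, I would first pin down, in each of Cases 3 and 4, which central elements lie in $C_K(\calO)^0$, using the explicit Lie-triple representatives $e,h$ and the realization in the Clifford algebra from Section \ref{ss:clifford}.

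For Case 4, $A_K(\calO)\cong 1$ by the Proposition, so $C_K(\calO) = C_K(\calO)^0$ and the inner induction is trivially $\mathrm{Triv}$ — nothing to prove beyond quoting Proposition \ref{p:regfun2}'s analogue. The real content is Case 3: here $A_K(\calO_{I,II})\cong\bbZ_2$, so $\Ind_{C_K(\calO)^0}^{C_K(\calO)}(\mathrm{Triv})$ is the sum of the two characters of $\bbZ_2$, i.e. $\mathrm{Triv}+\mathrm{Sgn}$. I must then verify that the $K$-types $\mu_1=(1,\dots,1)$ and $\mu_2=(\tfrac12,\dots,\tfrac12)$ restrict to $C_K(\calO_I)$ to give exactly these two characters (and similarly $\nu_1,\nu_2$ for $\calO_{II}$), and that $\mu_1|_{C_K(\calO_I)}$ is the trivial one. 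This amounts to checking: (i) both $\mu_i$ actually contain a $C_K(\calO)^0$-fixed vector, which follows from the Case 3 computation in the preceding theorem (the fixed vectors sit in $V(a_1,a_1,a_3,a_3,\dots)$-type modules, and $\mu_1,\mu_2$ are of that shape — note $(1,\dots,1)$ and $(\tfrac12,\dots,\tfrac12)$ both have the repeated-coordinate pattern after reindexing); (ii) the nontrivial central element not in $C_K(\calO_I)^0$ acts by $+1$ on the fixed vector of $\mu_1$ and by $-1$ on that of $\mu_2$ — this is the scalar by which that central element of $\Spin(2n)$ acts on the respective irreducibles, computed from the highest weight paired against the relevant fundamental coweight.

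The key step, and the main obstacle, is (ii): correctly identifying \emph{which} element of $Z(K)\cong\bbZ_2\times\bbZ_2$ generates $A_K(\calO_I)$ versus $A_K(\calO_{II})$, and checking it is distinct from the element that already lies in $C_K(\calO)^0$. This requires the explicit Clifford-algebra description of $C_K(e)$ from Section \ref{ss:clifford} — in particular, tracking how the spin cover behaves over the $\SO$-centralizer, whose component group for $[2^n]$ is related to the $\pm$ choices. I expect that $e_I$ and $e_{II}$ differ precisely by swapping the sign in the last $\SO(2)$-block ($X(\ep_{n-1}-\ep_n)$ versus $X(\ep_{n-1}+\ep_n)$), which conjugates $C_K(\calO_I)^0$ to $C_K(\calO_{II})^0$ by the outer element of $O(2n)$, and correspondingly exchanges the roles of the two order-2 central characters; this is why one gets $V(\dots,a_{n-1},a_{n-1})$ in one case and $V(\dots,a_{n-1},-a_{n-1})$ in the other. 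Once the generator of $A_K(\calO)$ is identified as a specific central element, its scalar action on $V(\mu_1)$ and $V(\mu_2)$ is a one-line character computation, and the decomposition $\Ind_{C_K(\calO)^0}^{C_K(\calO)}(\mathrm{Triv})=\psi_1+\psi_2$ (resp. $\phi_1+\phi_2$) with $\psi_1,\phi_1=\mathrm{Triv}$ and $\psi_2,\phi_2=\mathrm{Sgn}$ follows from Frobenius reciprocity, since each character of $A_K(\calO)\cong\bbZ_2$ occurs exactly once in the regular representation.
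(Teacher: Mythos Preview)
Your approach is correct, and since the paper omits the proof of this lemma entirely, there is nothing to compare against directly; your argument supplies what the paper leaves implicit. The reduction via $C_K(\calO)=Z(K)\cdot C_K(\calO)^0$ (Corollary~\ref{c:cgp}) and the identification of the characters of $A_K(\calO)$ through the scalar action of the central generator on the fixed vector is exactly the intended mechanism.

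One point to sharpen: your step (ii) anticipates that the generator of $A_K(\calO_I)$ might differ from that of $A_K(\calO_{II})$ inside $Z(K)\cong\bbZ_2\times\bbZ_2$, and that the outer automorphism ``exchanges the two order-2 central characters.'' This is not so. The proof of Proposition~5.2 in Section~\ref{ss:clifford} shows that when there are no odd blocks, $C_{\wti K}(\calO)\cong Sp(n)\times\{\pm I\}$, so in \emph{both} cases $I$ and $II$ the nontrivial element of $A_{\wti K}(\calO)$ is $-I$, the kernel of $Spin\to SO$. The distinction between $\calO_I$ and $\calO_{II}$ lies entirely in the identity component $C_K(\calO)^0$ (two nonconjugate embeddings of $Sp(n)$), which controls \emph{which} $K$-types admit fixed vectors, not in which central element detects the component group. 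Once you see this, step (ii) becomes immediate: $-I$ acts trivially on integer-weight representations and by $-1$ on half-integer ones, so $\psi_1,\phi_1$ (from $\mu_1,\nu_1$) are $Triv$ and $\psi_2,\phi_2$ (from $\mu_2,\nu_2$) are $Sgn$, regardless of whether one is in Case $I$ or Case $II$.
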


Then we are able to split up $R(\tu{\calO},Triv)$ as a sum of $R(\calO,\psi)$ as in (\ref{sum-reg-fun}).

\begin{prop} \label{p:regfun3}\

\begin{description}
\item[Case 3] 
{ $n=2p$, $\calO = [2^n]_{I,II}$}:  $R(\tu\calO_{I, II}) = R(\calO_{I,II},Triv) + R(\calO_{I,II}, Sgn)$ with
\begin{eqnarray*}
 R(\calO_I, Triv)&=&  \text{Ind} _{C_{ K}(\calO _I) }  ^{ K} (Triv)=\bigoplus V(a_1,a_1,a_3,a_3,\dots,a_{n-1},a_{n-1}), \quad \text{ with } a_i\in\bbZ,   \\
R(\calO_I, Sgn)&= &\text{Ind} _{C_{ K} (\calO _I) }  ^{ K}(Sgn)=\bigoplus V(a_1,a_1,a_3,a_3,\dots,a_{n-1},a_{n-1}),  \quad \text{ with }  a_i\in\bbZ +1/2, \\
R(\calO_{II}, Triv)&=&\text{Ind} _{C_{ K}(\calO _{II}) }  ^{ K}(Triv)=\bigoplus V(a_1,a_1,a_3,a_3,\dots,a_{n-1},-a_{n-1}),  \quad \text{ with }  a_i\in\bbZ,  \\
R(\calO_{II}, Sgn)&=&\text{Ind} _{C_{ K}(\calO _{II}) }  ^{ K}(Sgn)=\bigoplus V(a_1,a_1,a_3,a_3,\dots,a_{n-1},-a_{n-1}),  \quad \text{ with  } a_i\in\bbZ  +1/2,
\end{eqnarray*}
satisfying $a_1\ge a_3 \ge \dots\ge a_{n-1}\ge 0$.

\bigskip

\item[Case 4]
{ $\calO=[2^{2k}\ 1^{2n-4k}]$, $0\le k \le p-1$}:
 \begin{eqnarray*}
 R(\tu{\calO},Triv)=R(\calO,Triv)= \text{Ind} _{C_{ K} (\calO)} ^{ K} (Triv)= \bigoplus V(a_1,a_1,a_3,a_3,\dots, a_{2k-1},a_{2k-1},0,\dots,0), \  \text{ with } a_i\in \bbZ,
 \end{eqnarray*}
 satisfying $a_1\ge a_3 \ge \dots\ge a_{2k-1}\ge 0$
\end{description}
\end{prop}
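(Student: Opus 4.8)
\textbf{Proof proposal for Proposition \ref{p:regfun3}.}

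The plan is to mirror exactly the bookkeeping already carried out for Cases 1 and 2: first establish the full $K$-decomposition of $R(\tu\calO,\mathrm{Triv})=\Ind_{C_K(\calO)^0}^K(\mathrm{Triv})$ using the preceding theorem, then cut it along the characters of $A_K(\calO)$ produced by the preceding lemma. For Case 4 there is nothing to cut: $A_K(\calO)\cong 1$ by Proposition \ref{p:regfun1}'s companion (Case 4 of the centralizer proposition), and the lemma confirms $\Ind_{C_K(\calO)^0}^{C_K(\calO)}(\mathrm{Triv})=\mathrm{Triv}$, so $R(\tu\calO,\mathrm{Triv})=R(\calO,\mathrm{Triv})$ is literally the list of $K$-types $V(a_1,a_1,a_3,a_3,\dots,a_{2k-1},a_{2k-1},0,\dots,0)$ with $a_i\in\bbZ$ from the theorem, and we are done by transport along the trivial component group. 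So the content is entirely in Case 3.

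For Case 3, I would proceed as follows. First recall from the theorem that $R(\tu{\calO_I},\mathrm{Triv})=\bigoplus V(a_1,a_1,a_3,a_3,\dots,a_{n-1},a_{n-1})$ ranging over $a_i$ with $a_1\ge a_3\ge\dots\ge a_{n-1}\ge 0$, but now the $a_i$ are allowed to be \emph{either} all integers \emph{or} all half-integers, since the induction is from $C_K(\calO_I)^0$ and $\Spin(2n)$ has genuine representations; the proof of that theorem, which invokes Kostant and Helgason's theorems for $(\frakgl(n),\fraksp(n))$, does not constrain the central character. Next, by the preceding lemma, $\Ind_{C_K(\calO_I)^0}^{C_K(\calO_I)}(\mathrm{Triv})=\psi_1+\psi_2$ where $\psi_1=\mathrm{Triv}$ and $\psi_2=\mathrm{Sgn}$ are distinguished by the restriction to $C_K(\calO_I)$ of the highest weights of $\mu_1=(1,\dots,1)$ and $\mu_2=(\tfrac12,\dots,\tfrac12)$ respectively; concretely $\psi_2$ is the restriction of the central character $z\mapsto z$ of the genuine representations while $\psi_1$ is trivial on $Z(K)$. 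Using $C_K(\calO)=Z(K)\cdot C_K(\calO)^0$ (the last line of Proposition \ref{p:regfun1}'s companion), a $K$-type $V(a_1,\dots,a_n)$ appearing in $R(\tu{\calO_I},\mathrm{Triv})$ lands in $R(\calO_I,\psi_i)=\Ind_{C_K(\calO_I)}^K(\psi_i)$ precisely according to how its central character matches $\psi_i$ — i.e. integral weights $a_i\in\bbZ$ give $R(\calO_I,\mathrm{Triv})$ and half-integral weights $a_i\in\bbZ+1/2$ give $R(\calO_I,\mathrm{Sgn})$. This yields exactly the two lines stated for $\calO_I$. For $\calO_{II}$ the argument is identical after replacing $h_I=H(1,\dots,1)$ by $h_{II}=H(1,\dots,1,-1)$, which is the outer-automorphism twist; this swaps the sign of the last coordinate $a_{n-1}\mapsto -a_{n-1}$ throughout, and the component group characters $\phi_1=\mathrm{Triv}$, $\phi_2=\mathrm{Sgn}$ are again separated by integrality of the $a_i$, giving the two lines for $\calO_{II}$.

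The only genuinely substantive point — and the step I expect to be the main obstacle to present cleanly rather than to prove — is the compatibility claim that the ``inner'' decomposition $\Ind_{C_K(\calO)^0}^{C_K(\calO)}(\mathrm{Triv})=\sum_\psi\psi$ of equation (\ref{eq:char}) is matched, $K$-type by $K$-type, with the integrality/central-character grading of the list in the theorem; that is, that restricting a $K$-type's highest weight to $C_K(\calO)$ and then to $A_K(\calO)=C_K(\calO)/C_K(\calO)^0$ really does read off the parity of $\sum a_i$ modulo the genuine/non-genuine dichotomy. This is where $C_K(\calO)=Z(K)\cdot C_K(\calO)^0$ is essential: it forces every character of $A_K(\calO)$ to be detected already on $Z(K)\cong\bbZ_4$ (or its relevant quotient $\bbZ_2$), and the action of $Z(K)$ on $V(a_1,\dots,a_n)$ is the scalar determined by whether $a_i\in\bbZ$ or $a_i\in\bbZ+\tfrac12$. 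Once that identification is made explicit — which amounts to checking that the element of $C_K(\calO_{I,II})$ representing the nontrivial class of $A_K$ can be taken inside $Z(K)$ and computing its scalar, a short Clifford-algebra computation of the type carried out in Section \ref{ss:clifford} — the associativity of induction in (\ref{eq:decomp}) finishes the proof with no further work.
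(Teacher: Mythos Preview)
Your proposal is correct and follows exactly the route the paper intends: the proposition is stated without proof because it is meant to fall out of the preceding Theorem (the computation of $\Ind_{C_K(\calO)^0}^K(\mathrm{Triv})$), the preceding Lemma (identifying $\psi_1=\mathrm{Triv}$, $\psi_2=\mathrm{Sgn}$ via the $K$-types $\mu_i,\nu_i$), and the general splitting mechanism of equations (\ref{eq:decomp})--(\ref{sum-reg-fun}), with the key structural input $C_{\wti K}(\calO)=Z(\wti K)\cdot C_{\wti K}(\calO)^0$ from Corollary \ref{c:cgp}. Your identification of the nontrivial class in $A_{\wti K}(\calO_{I,II})$ with $-I\in Z(\wti K)$ (so that the $\mathrm{Triv}/\mathrm{Sgn}$ split is precisely the integer/half-integer dichotomy) is exactly what Section \ref{ss:clifford} establishes for the ``no odd blocks'' case: there $C_{\wti K}(\calO)\cong C_K(\calO)\times\{\pm I\}$, so the component group is literally $\{\pm I\}$.

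One small correction: for $n=2p$ even you have $Z(\Spin(2n))\cong\bbZ_2\times\bbZ_2$, not $\bbZ_4$; this does not affect your argument since the only central element you actually use is $-I$.
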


\section{Representations with small support}

\subsection{Langlands Classification} Let $G$ be a complex linear
algebraic 
reductive group viewed as a real Lie group. Let $\theta$ be a Cartan
involution with fixed points $K.$ Let $G\supset B=HN\supset H=TA$ be a
Borel subgroup containing a fixed $\theta$-stable Cartan subalgebra $H$, with
$$
\begin{aligned}
&T=\{ h\in H\ \mid \ \theta(h)=h\},\\
&A=\{h\in H\ \mid \ \theta(h)=h^{-1}\}.
\end{aligned}
$$

The Langlands classification is as follows. Let
$\chi\in\widehat H.$ Denote by 
$$
X(\chi):=Ind_B^G[\chi\otimes   \one]_{K\text{-finite}} 
$$
the corresponding admissible standard module (Harish-Chandra
induction). Let $(\mu,\nu)$ be the differentials of $\chi\mid_T$ and
$\chi\mid_A$ respectively. Let $\la_L=(\mu+\nu)/2$ and
$\la_R=(\mu-\nu)/2$. We write $X(\mu,\nu)=X(\la_L,\la_R)=X(\chi).$
\begin{theorem}\ 
  \label{t:langlands}

  \begin{enumerate}
  \item $X(\mu,\nu)$ has a unique irreducible subquotient denoted
    $\ovl{X}(\mu,\nu)$ which contains the $K$-type with extremal
    weight $\mu$ occurring with multiplicity one in $X(\mu,\nu).$
\item $\ovl{X}(\mu,\nu)$ is the unique irreducible quotient when
    $\langle Re\nu,\al\rangle >0$ for all $\alpha\in\Delta(\fk n,\fk h),$ and
    the unique irreducible submodule when $\langle Re\nu,\al\rangle <0$.
  \item $\ovl{X}(\mu,\nu)\cong\ovl{X}(\mu',\nu')$ if and only if there
      is $w\in W$ such that $w\mu=\mu', w\nu=\nu'.$ Similarly for
      $(\la_L,\la_R).$   
  \end{enumerate}
\end{theorem}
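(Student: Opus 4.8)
The plan is to obtain Theorem~\ref{t:langlands} as the specialization of the general Langlands classification to the complex case, where the minimal parabolic is the Borel $B=HN$, all Cartan subgroups are conjugate to $H=TA$, and $W=W(\fk g,\fk h)$ acts on $H$, hence diagonally on pairs $(\mu,\nu)$. First I would record the $K$-structure: by Frobenius reciprocity $X(\mu,\nu)|_K\cong\Ind_T^K(e^{\mu})$, so for $V_\lambda\in\widehat K$ one has $[V_\lambda:X(\mu,\nu)]=\dim V_\lambda[\mu]$, the dimension of the $\mu$-weight space. Taking $\lambda$ to be the dominant $W$-representative of $\mu$, the weight $\mu$ is extremal in $V_\lambda$, so this multiplicity equals $1$; write $V(\mu)$ for this $K$-type, and note $V(\mu)=V(w\mu)$ for every $w\in W$. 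Since $V(\mu)$ occurs in $X(\mu,\nu)$ with multiplicity one, it lies in exactly one irreducible subquotient; defining $\ovl X(\mu,\nu)$ to be that subquotient gives the existence and uniqueness statement in~(1).

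Next I would bring in the standard intertwining operators $A(w,\nu)\colon X(\mu,\nu)\to X(w\mu,w\nu)$, defined by the usual integral over the appropriate unipotent subgroup $N\cap wN^{-}w^{-1}$: these converge for $\mathrm{Re}\,\nu$ in a cone, continue meromorphically, and after normalization satisfy the cocycle identity $A(w_1w_2)=A(w_1)A(w_2)$ when lengths add, with $A(w^{-1})A(w)$ scalar and $A(w)$ an isomorphism off a locally finite family of hyperplanes. In particular $X(\mu,\nu)$ and $X(w\mu,w\nu)$ have the same distribution character, hence the same composition factors with multiplicity. For~(2), when $\langle\mathrm{Re}\,\nu,\alpha\rangle>0$ for all $\alpha\in\Delta(\fk n,\fk h)$, I would run the usual leading-exponent argument via Casselman's theory: any irreducible quotient $Y$ of $X(\mu,\nu)=\Ind_B^G(\chi)$ has $\Hom_H(\chi,Y_N)\neq 0$, and strict dominance of $\mathrm{Re}\,\nu$ forces $\chi$ to be the unique minimal exponent, so $Y$ is unique and equals the image of $A(w_0,\nu)$; the case $\langle\mathrm{Re}\,\nu,\alpha\rangle<0$ is the contragredient statement and yields the unique irreducible submodule. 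That the distinguished $K$-type $V(\mu)$ lies in $Y$ is then part of the lowest-$K$-type formalism: by the weight-space computation above $V(\mu)$ is the unique lowest $K$-type of $X(\mu,\nu)$, and the lowest $K$-type sits in the Langlands quotient.

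For~(3), if $(\mu',\nu')=(w\mu,w\nu)$ then $X(\mu,\nu)$ and $X(\mu',\nu')$ share their composition factors and share the distinguished $K$-type $V(\mu)=V(\mu')$, which sits in a unique factor, so $\ovl X(\mu,\nu)\cong\ovl X(\mu',\nu')$; the same holds for $(\lambda_L,\lambda_R)=((\mu+\nu)/2,(\mu-\nu)/2)$. Conversely, suppose $\ovl X(\mu,\nu)\cong\ovl X(\mu',\nu')$. Its infinitesimal character, read off from the standard module, is the pair of Weyl orbits $(W\lambda_L,W\lambda_R)$, so these orbits agree; its $K$-spectrum determines $V(\mu)$, forcing $\mu'\in W\mu$, so after a $W$-translation we may take $\mu'=\mu$. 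Using the forward direction and $A(w)$ I would reduce to $\mathrm{Re}\,\nu$ and $\mathrm{Re}\,\nu'$ both dominant; then by~(2) the two sides are the unique irreducible quotients of $X(\mu,\nu)$ and $X(\mu,\nu')$, and Langlands' disjointness theorem (equivalently, here, the vanishing $\Hom_G(X(\mu,\nu),X(\mu,\nu'))=0$ for distinct dominant $\nu,\nu'$, proved by an intertwining-operator computation) gives $\nu=\nu'$; unwinding the translations produces the asserted $w$.

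The main obstacle is the converse half of~(3): the infinitesimal character only recovers $(\mu,\nu)$ up to the a priori larger group $W\times W$ acting on $(\lambda_L,\lambda_R)$, and the lowest-$K$-type datum $V(\mu)$ only recovers $\mu$ modulo $W$, so the real content is forcing a \emph{single} $w\in W$ to carry $\mu\mapsto\mu'$ and $\nu\mapsto\nu'$ simultaneously. This is exactly Langlands' disjointness in the present setting; carrying it out requires the careful bookkeeping with intertwining operators and leading exponents indicated above (or, for complex groups specifically, the Duflo--Zhelobenko analysis of principal series, or Vogan's lowest-$K$-type theory). The remaining ingredients --- Frobenius reciprocity, Casselman's subrepresentation theorem and Jacquet-module exponent estimates, and the construction and meromorphic continuation of the operators $A(w,\nu)$ --- form the standard package and present no conceptual difficulty here.
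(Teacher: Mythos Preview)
The paper does not prove Theorem~\ref{t:langlands}; it is stated as the well-known Langlands classification for complex reductive groups (with references such as \cite{V} in the bibliography) and used as background input for the rest of the paper. Your proposal is a reasonable and essentially correct sketch of the standard proof via Frobenius reciprocity, intertwining operators, leading exponents, and Langlands disjointness, so there is nothing to compare against on the paper's side.
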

Assume $\la_L,\ \la_R$ are both dominant integral. 
Write $F(\la)$ to be the finite dimensional representation of $G$ with infinitesimal character $\la$.
Then
$\oX(\la_L,-\la_R)$ is the finite dimensional representation
$F(\la_L)\otimes {F(-w_0\la_R)}$ where  $w_0\in W$ is the long Weyl
group element. The lowest $K$-type has extremal weight
$\la_L-\la_R$. Weyl's character formula implies
$$
\oX(\la_L,-\la_R)=\sum \limits _{w\in W} \ep(w)X(\la_L,-w\la_R).
$$

\subsubsection*{}
In the following contents in this section, we use different notation as follows. We write
 $(\tu{G}, \tu{K})=(Spin (2n,\bbC), Spin(2n) )$ and  $(G,K)=(SO(2n,\bbC) , SO(2n))$. 

\subsection{Infinitesimal characters}  \label{s:infchar}

From \cite{B}, we can associate to each $\calO$ in Section \ref{ss:orbits} an infinitesimal character
$\la_{\calO}$. The fact is that $\calO$ is the minimal orbit which can be the associated variety of 
a $(\fk g, K)$-module with infinitesimal character $(\la_L, \la_R)$, with  $\la_L$ and $\la_R$ both conjugate to $\la_{\calO}$. 
 The  $\la_{\calO}$ are listed below.

\begin{description}
\item[Case 1]  {$n=2p$, $\calO=[3 \ 2^{n-2} \ 1]$,}   
$$\la_{\calO} =\rho/2= (p-\frac{1}{2}, \dots,\frac{3}{2}, \frac{1}{2}\mid p-1,\dots, 1, 0).$$

\item[Case 2] 
{ $\calO=[3\ 2^{2k} \ 1^{2n-4k-3}]$, $0\le k\le p-1$,} 
$$ \la_{\calO}=(  k+\frac{1}{2}, \dots,\frac{3}{2}, \frac{1}{2}\mid n-k-2,\dots, 1, 0).$$

\item[Case 3]
{$n=2p$, $\calO_{I,II}=[2^n]_{I, II}$,}
\begin{eqnarray*}
\la  _{\calO  _I}&=& \left (  \frac{2n-1}{4} ,\frac{2n-5}{4}  , \dots, \frac{-(2n-7)}{4} ,  \frac{ -(2n-3)}{4}  \right),\\
\la _{\calO_{II}} &=& \left (  \frac{2n-1}{4} ,\frac{2n-5}{4}  , \dots, \frac{-(2n-7)}{4} ,  \frac{ (2n-3)}{4}  \right).
\end{eqnarray*}

\item[Case 4]
{ $\calO=[2^{2k}\ 1^{2n-4k}]$, $0\le k \le p-1$,}
$$
\la_{\calO}= (k,k-1,\dots, 1 ; n-k-1, \dots, 1, 0).
$$
\end{description}

Notice that the infinitesimal characters in Cases 1 and 2 are nonintegral. For instance, in Case 1, $\la_{\calO}=\rho/2$, where $\rho$ is half
sum of the positive roots of type $D_{2p}$. The  integral system is of
type $D_p\times D_p$. The notation $|$  separates the coordinates of  the two $D_p$.

 \subsection{} \label{ss:rep} We define
the following irreducible modules in terms of Langlands classification:

\begin{description}
\item[Case 1] {$n=2p$, $\calO=[3 \ 2^{n-1} \ 1]$}.
\begin{enumerate}
\item[(i)] $\Xi_1 = \oX (\la_{\calO},-\la_{\calO})$;
\item[(ii)] $\Xi_2= \oX (\la_{\calO}, -w_1\la_{\calO} )$, where $w_1\la_{\calO}= (p-\frac{1}{2}, \dots,
\frac{3}{2}, -\frac{1}{2} \mid p-1,\dots, 1, 0)$;
\item[(iii)]  $\Xi_3= \oX (\la_{\calO}, -w_2\la_{\calO})$, where $w_2\la_{\calO}= (p-1,\dots, 1, 0 \mid 
p-\frac{1}{2}, \dots, \frac{3}{2}, \frac{1}{2} )$; 
\item[(iv)]  $\Xi_4= \oX (\la_{\calO},-w_3 \la_{\calO})$, where $w_3\la_{\calO}=(p-1,\dots, 1, 0 \mid 
p-\frac{1}{2}, \dots, \frac{3}{2}, -\frac{1}{2} )$.  
\end{enumerate}

\item[Case 2]
{$\calO=[3\ 2^{2k} \ 1^{2n-4k-3}]$, $0\le k\le p-1$}.
\begin{enumerate}
\item [(i)] $\Xi_1=\oX(\la_{\calO},-\la_{\calO})$;
\item [(ii)]$\Xi_2= \oX(\la_{\calO},-w_1\la_{\calO}),\ w_1\la_{\calO}= (  k+\frac{1}{2}, \dots,\frac{3}{2}, \frac{1}{2}\mid n-k-2,\dots, 1, 0 ).$
\end{enumerate}

\item[Case 3]
{$n=2p$, $\calO_{I,II}=[2^n]_{I, II}$}.
\begin{enumerate}
\item [(i)] $\Xi_{I} =\oX(\la_ {\calO_I} , -\la_{\calO_{I} } )$;
\item [(i$'$)] $\Xi _{I} =\oX(\la _{\calO_{I}}  , -w\la
  _{\calO_{I}})$,\quad  { $w\la _{\calO _I}=\left(\frac{2n-3}{2},\dots ,-\frac{2n-1}{4}\right)$};
\item [(ii)] $\Xi_{II} =\oX(\la_ {\calO_{II}} , -\la_{\calO_{II} } )$;
\item [(ii$'$)] $\Xi '_{II} =\oX(\la _{\calO_{II}},\quad -w\la_{\calO_{II}})$, { $w\la _{\calO
      _{II}}=\left(\frac{2n-3}{4},\dots
      ,-\frac{2n-5}{4},\frac{2n-1}{4}\right)$};
\end{enumerate}
\item[Case 4]  {$\calO=[2^{2k}\ 1^{2n-4k}]$, $0\le k \le p-1$}.
\begin{enumerate}
\item[(i)] $\Xi = \oX(\la_\calO, -\la_\calO) $.
\end{enumerate}
\end{description}

\begin{remark}
The representations introduced above form the set $\calU_{\tu{G}}(\calO,\la_{\calO})$.
\end{remark}

\subsubsection*{Notation} \label{ss:notation}
We write $F(\la)$ for the finite dimensional representation of the appropriate $SO$ or $Spin$ group with infinitesimal character $\la$; 
write $V(\mu)$ for the finite dimensional representation of the appropriate $SO$ or $Spin$ group with highest weight $\mu$.

\subsection{$\tu{K}$-structure} We compute  the $\tu{K}$-types of each representation listed in \ref{ss:rep}.

\subsubsection*{Case 1}

The arguments are refinements of those in
\cite{McG}. 
 Let $\wti{H}$ be the image of $Spin(2p,\bbC)\times Spin(2p,\bb C)$ in $Spin(4p,\bb C)$, and   $\wti{U}$ the image of the maximal compact subgroup $Spin(2p)\times Spin(2p)$ in  $\tu{K}$.  Irreducible representations of $\wti{U}$ can be viewed as $Spin(2p)\times Spin(2p)$-representations such that $\pm(I,I)$ acts trivially. 

Cases (i) and (ii)  factor to representations of $SO(2n,\bbC),$ (iii) and (iv)
are genuine for $Spin(2n,\bbC).$ 
 
The Kazhdan-Lusztig conjectures for
nonintegral infinitesimal character together with Weyl's  formula for the character of a finite dimensional module, imply that 
\begin{equation}
  \label{eq:charfla}
\ovl{X}(\rho/2, -w_i\rho/2)=\sum_{w\in W(D_p\times D_p)} \ep(w) X(\rho/2,-ww_i\rho/2),
\end{equation}
since $W(\la_{\calO})=W(D_p\times D_p)$.

Restricting (\ref{eq:charfla}) to $\wtu{K},$
and using Frobenius reciprocity,  we get
\begin{equation} \label{eq:ind-K-U}
\ovl{X}(\rho/2, - w_i\rho/2)\mid_{\wtu{K}}=Ind_{\tu U }^{\wtu{K}}
[F_1(\rho/2) {\otimes} F_2(-w_i\rho/2)],
\end{equation}
where $F_{1,2}$ are finite dimensional representations of  the two factors $Spin(2p,\bb C)\times Spin(2p,\bb C)$ with infinitesimal 
character $\rho/2$ and $-w_i\rho/2$, respectively. 
The terms $[F_1(\rho/2) {\otimes } F_2(-w_i\rho/2)]$ are 
\begin{description}
\item[(i)] $V (1/2,\dots ,1/2)\otimes V(1/2,\dots ,1/2) \boxtimes   V(0,\dots ,0)\otimes V(0,\dots
  ,0)$,
\item[(ii)] $V(1/2,\dots ,-1/2)\otimes V(1/2,\dots ,1/2) \boxtimes V(0,\dots
  ,0)\otimes V(0,\dots ,0)$,
\item[(iii)] $V(1/2,\dots ,1/2)\otimes V(0,\dots ,0)\boxtimes V(0,\dots
  ,0)\otimes V(1/2,\dots ,1/2)$,
\item[(iv)] $V(1/2,\dots ,1/2)\otimes V(0,\dots ,0)\boxtimes V(0,\dots
  ,0)\otimes V(1/2,\dots ,-1/2)$
\end{description}
as $Spin(n)\times Spin(n)$-representations (see \ref{ss:notation} for the notation).

\begin{lemma} \label{pinrep}
Let  $SPIN_+ =V(\frac{1}{2},\dots,\frac{1}{2}),$ and  $SPIN_- = V(\frac{1}{2},\dots,\frac{1}{2}, -\frac{1}{2}) \in \widehat{Spin(n)}$. Then 
\begin{equation}
\begin{aligned}
SPIN _+ \otimes SPIN_+ &= \bigoplus \limits _{0\le k\le [\frac{p}{2}]}
V(\underset{2k}{\underbrace{1\dots 1}},
\underset{p-2k}{\underbrace{0\dots 0}}  ) , \\
SPIN_+\otimes SPIN_- &=  \bigoplus \limits _{0\le k\le [\frac{p-1}{2}]}
V(\underset{2k+1}{\underbrace{1\dots 1}}.
\underset{p-2k-1}{\underbrace{0\dots 0}}  )
\end{aligned}
\end{equation}
\end{lemma}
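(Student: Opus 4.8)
The plan is to decompose the tensor product of two half-spin representations of $\Spin(n)$ (here $n=2p$) directly, using the classical branching/Clifford-algebra picture rather than a Weyl-character computation, and then to isolate the two products $SPIN_+\otimes SPIN_+$ and $SPIN_+\otimes SPIN_-$ from the resulting list. Recall that if $S$ denotes the total spin module for $\Spin(2p)$, then $S\cong SPIN_+\oplus SPIN_-$ and $S\otimes S\cong\bigwedge^\bullet(\bbC^{2p})\cong\bigoplus_{j=0}^{2p}\bigwedge^j(\bbC^{2p})$ as $\SO(2p)$-modules, with $\bigwedge^j(\bbC^{2p})\cong V(1^j,0^{p-j})$ for $0\le j\le p-1$, while $\bigwedge^p$ splits as $V(1,\dots,1)\oplus V(1,\dots,1,-1)$ and $\bigwedge^{2p-j}\cong\bigwedge^j$. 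So the content of the lemma is exactly the statement of how the $\bbZ/2\times\bbZ/2$ grading on $S\otimes S$ (by the two tensor factors' parities) matches the parity of $j$ on $\bigwedge^\bullet$.

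First I would set up signs: the half-spin decomposition $S=S^+\oplus S^-$ is the $\pm1$-eigenspace decomposition of the chirality element $\omega=e_1\cdots e_{2p}\in\mathrm{Cl}(2p)$, and under the isomorphism $S\otimes S^*\cong\mathrm{Cl}(2p)\cong\bigwedge^\bullet(\bbC^{2p})$ (with $S^*\cong S$ via the spin form, the form pairing $S^\pm$ with $S^\pm$ when $p$ is even and $S^\pm$ with $S^\mp$ when $p$ is odd — this is where the parity of $p$ enters, matching the $[p/2]$ versus $[(p-1)/2]$ cutoffs), left and right multiplication by $\omega$ act on $\bigwedge^j$ by $(-1)^j$ up to a global sign depending on $p$. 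Concretely, $\omega\cdot\xi\cdot\omega^{-1}=(-1)^{j}\xi$ for $\xi\in\bigwedge^j$ when we use the main anti-automorphism to identify $S^*$-factors, so the subspace $S^+\otimes S^+\oplus S^-\otimes S^-$ (the $\omega\otimes\omega$-invariants, i.e. "$\mathrm{diag}$-even") corresponds to $\bigoplus_{j\ \mathrm{even}}\bigwedge^j$ and the odd part to $\bigoplus_{j\ \mathrm{odd}}\bigwedge^j$. Then I would further split by the single factor: $S^+\otimes S^+$ versus $S^-\otimes S^-$ are separated inside the even part by the eigenvalue of $\omega\otimes 1$, which corresponds on $\bigwedge^j$ (for $j<p$) to the Hodge-star type operator; since $\bigwedge^j\cong\bigwedge^{2p-j}$ with the two copies swapped by $\omega\otimes1$, and the diagonal copies $\bigwedge^0,\dots$ pair up, one reads off that $S^+\otimes S^+$ contains each $V(1^j,0^{p-j})$ with $j$ even, $0\le j\le p-1$, plus possibly a piece of $\bigwedge^p$; counting dimensions (or using that $\dim S^+\otimes S^+=2^{2p-2}$) forces the answer to be $\bigoplus_{0\le k\le[p/2]}V(1^{2k},0^{p-2k})$ exactly, where the top term $k=p/2$ (only when $p$ is even) is the irreducible $V(1,\dots,1)$ and not $V(1,\dots,1,-1)$ — the choice between the two "very even" pieces of $\bigwedge^p$ is pinned down by which chirality $S^+$ has. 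The $SPIN_+\otimes SPIN_-$ computation is identical with all parities flipped.

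The step I expect to be the main obstacle is the careful bookkeeping of the two $\bbZ/2$'s and the $p$-parity: getting the isomorphism $S^*\cong S^{\pm}$ right (equivalently, whether $\omega$ is symmetric or antisymmetric for the spin form, which flips with $p\bmod 2$), and then correctly identifying which of the two very-even constituents $V(1,\dots,1)$, $V(1,\dots,1,-1)$ lands in $SPIN_+\otimes SPIN_+$ versus $SPIN_+\otimes SPIN_-$. I would pin this down by an explicit highest-weight-vector check: the product of the highest weight vectors of $SPIN_+$ and $SPIN_+$ has weight $(1,\dots,1)$, which is the highest weight of $V(1,\dots,1)$, so that constituent sits in $SPIN_+\otimes SPIN_+$; similarly the extremal vector of weight $(1,\dots,1,-1)$ arises in $SPIN_+\otimes SPIN_-$. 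A clean alternative that sidesteps the Clifford gymnastics is to use the $\SU(p)\subset\Spin(2p)$ branching $S^+\big|\cong\bigoplus_{j\ \mathrm{even}}\bigwedge^j\bbC^p$, $S^-\big|\cong\bigoplus_{j\ \mathrm{odd}}\bigwedge^j\bbC^p$, compute the tensor products inside $\GL(p)$ via Littlewood–Richardson, and then match highest weights; but the Clifford-algebra route above is the most direct and I would present that, relegating the parity verification to a short explicit computation. Either way, once the constituent list is established the multiplicity-one and dimension checks are routine.
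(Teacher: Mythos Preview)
The paper offers nothing beyond ``straightforward'', so your Clifford-algebra route via $S\otimes S^*\cong\mathrm{Cl}(2p)\cong\bigwedge^\bullet\bbC^{2p}$ supplies far more than the paper and is the standard framework. But there is a concrete error in your parity bookkeeping. The known decomposition for $\Spin(2p)$ is
\[
S^+\otimes S^+ \;\cong\; {\textstyle\bigwedge^p_+} V\oplus{\textstyle\bigwedge^{p-2}}V\oplus{\textstyle\bigwedge^{p-4}}V\oplus\cdots,
\qquad
S^+\otimes S^- \;\cong\; {\textstyle\bigwedge^{p-1}}V\oplus{\textstyle\bigwedge^{p-3}}V\oplus\cdots,
\]
so the exponents $j$ occurring in $S^+\otimes S^+$ satisfy $j\equiv p\pmod 2$, not $j$ even. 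Your claim that ``$S^+\otimes S^+\oplus S^-\otimes S^-$ corresponds to $\bigoplus_{j\ \mathrm{even}}\bigwedge^j$'' is therefore false when $p$ is odd. The source of the slip is exactly the duality you flag but then misapply: conjugation by $\omega$ on $\mathrm{Cl}(V)\cong S\otimes S^*$ does give the Clifford $\bbZ_2$-grading, but passing from $S\otimes S^*$ to $S\otimes S$ via the spin form swaps $S^+\leftrightarrow S^-$ when $p$ is odd, so the ``diagonal-even'' block $S^+\otimes S^+\oplus S^-\otimes S^-$ lands on $S^+\otimes (S^-)^*\oplus S^-\otimes(S^+)^*$, the \emph{odd} Clifford part.

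Your own highest-weight check would have caught this: the tensor of the two highest weight vectors has weight $(1,\dots,1)$, which for odd $p$ is \emph{not} of the form $(1^{2k},0^{p-2k})$, yet it must sit inside $SPIN_+\otimes SPIN_+$. Concretely, for $p=3$ one has $\Spin(6)\cong\SU(4)$, $SPIN_+\cong\bbC^4$, and $\bbC^4\otimes\bbC^4=S^2\bbC^4\oplus{\textstyle\bigwedge^2}\bbC^4=V(1,1,1)\oplus V(1,0,0)$, not $V(0,0,0)\oplus V(1,1,0)$; equivalently, $SPIN_+$ is not self-dual for odd $p$, so the trivial representation cannot occur. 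Thus the lemma as printed is correct only for $p$ even; for $p$ odd the two displayed formulas should be interchanged. The paper's subsequent Proposition~\ref{p:n2p} is argued independently via fine $K$-types, so the main results survive, but your writeup should either restrict to even $p$ or state and prove the corrected version.
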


\begin{proof}
  The proof is straightforward. 
\end{proof}

 Lemma \ref{pinrep} implies that (\ref{eq:ind-K-U}) becomes

\begin{equation}\label{ind-fine1}
\begin{aligned}
(i)\ &\oX (\rho/2,- \rho/2)\mid_{\wti{K}} &=\text{Ind} ^{\tu{K}} _{\wti U} &\left [\bigoplus \limits _{0\le k \le [\frac{p}{2}] }    V(\underset{2k}{\underbrace{1,\dots,1}},0,\dots,0  )\boxtimes V(0,\dots,0) \right ]
\\ 
(ii)\ &\oX (\rho/2,- w_1\rho/2)\mid_{\wti{K}}  & =\text{Ind} ^{\tu{K}} _{\wti U} &\left [\bigoplus \limits _{0\le k \le [\frac{p-1}{2}] }    V(\underset{2k+1}{\underbrace{1,\dots,1}},0,\dots,0  )\boxtimes V(0,\dots,0) \right ]
\\ 
(iii)\ &\oX (\rho/2, -w_2\rho/2)\mid_{\wti{K}}  &=\text{Ind} ^{\tu{K}}_{\wti U}
&\left [V(1/2,\dots ,1/2)\boxtimes V(1/2,\dots ,1/2)\right] \\
(iv)\ &\oX (\rho/2, -w_3\rho/2)\mid_{\wti{K}}  &=\text{Ind} ^{\tu{K}}_{ \wti U}
&\left [V(1/2,\dots ,1/2)\boxtimes V(1/2,\dots ,-1/2)\right].
\end{aligned}
\end{equation}

\begin{prop}\label{p:n2p}
 \begin{equation}
 \begin{aligned}
&\oX (\rho/2,-\rho/2) |_{\tu K}= \bigoplus
V(a_1,\dots ,a_n),\quad \text{ with } a_i\in\bb Z, \   \sum a_i\in 2 \bb Z, \\
&\oX (\rho/2, -w_1 \rho/2) |_{\tu K} = \bigoplus 
V(a_1,\dots,a_n),\quad \text{ with } a_i\in\bb Z, \   \sum a_i\in 2 \bb Z+1,\\
&\oX (\rho/2,-w_2\rho/2)  |_{\tu K}= \bigoplus
V(a_1,\dots, a_n),\quad \text{ with } a_i\in\bb Z+1/2, \   \sum a_i\in 2 \bb Z+p,\\
&\oX (\rho/2,-w_3\rho/2) |_{\tu K} =\bigoplus
V(a_1,\dots, a_n),\quad   \text{ with } a_i\in\bb Z+1/2, \   \sum a_i\in 2 \bb Z+p+1.
\end{aligned}
\end{equation}
\end{prop}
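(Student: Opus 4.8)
The plan is to compute the induced representations $\mathrm{Ind}^{\tu K}_{\wti U}[\,\cdot\,]$ appearing in (\ref{ind-fine1}) by systematic use of Frobenius reciprocity, reducing everything to the branching problem $\tu K = \Spin(2n,\bbC) \downarrow \wti U = $ (image of $\Spin(2p)\times\Spin(2p)$), and then matching the answer with the list in Proposition \ref{p:regfun1}. First I would record the branching rule for $V(a_1,\dots,a_n) \in \wht{\tu K}$ restricted to $\wti U$: a representation $V(b_1,\dots,b_p)\boxtimes V(c_1,\dots,c_p)$ of $\Spin(2p)\times\Spin(2p)$ occurs in $V(a_1,\dots,a_n)|_{\wti U}$ with a multiplicity given by a Littlewood--Richardson-type count (this is the classical $\SO(2p)\times\SO(2p)\subset\SO(4p)$ branching, lifted to the spin groups, and I would cite \cite{McG} where this is already done). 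By Frobenius reciprocity, $[V(a_1,\dots,a_n) : \mathrm{Ind}^{\tu K}_{\wti U}(W)] = [\,W : V(a_1,\dots,a_n)|_{\wti U}\,]$, so each of (i)--(iv) amounts to asking which $V(a_1,\dots,a_n)$ contain the relevant $\wti U$-type(s) listed on the right-hand side of (\ref{ind-fine1}).

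The key computational step is to run this for each case. For (iii) and (iv) the inducing module is a single irreducible $V(1/2,\dots,1/2)\boxtimes V(1/2,\dots,\pm 1/2)$; the $\wti U$-types here are genuine (the central element $(I,I)$, resp. its spin lift, acts by $-1$ on each tensor factor), which forces $a_i\in\bbZ+1/2$, and the branching multiplicities collapse to $0$ or $1$ with the occurring $V(a_1,\dots,a_n)$ characterized by a parity condition on $\sum a_i$ distinguishing the two half-spin representations of the second $\Spin(2p)$ factor — this is exactly the distinction $\sum a_i \in 2\bbZ + p$ versus $2\bbZ + p + 1$. For (i) and (ii) the inducing module is a \emph{direct sum} $\bigoplus_k V(\underbrace{1,\dots,1}_{m},0,\dots,0)\boxtimes V(0,\dots,0)$ over $m$ even (resp. $m$ odd); here $a_i\in\bbZ$, and I would show that as $m$ ranges over all even (resp. odd) values in $[0,p]$ and one takes the union of the sets of $V(a_1,\dots,a_n)$ containing $V(1^m,0^{p-m})\boxtimes V(0^p)$ in their $\wti U$-restriction, one sweeps out precisely all integral $V(a_1,\dots,a_n)$ with $\sum a_i$ even (resp. odd), each with total multiplicity one. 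The parity bookkeeping — that $V(1^m,0^{p-m})$ contributes to $V(a_1,\dots,a_n)$ only when $\sum a_i \equiv m \pmod 2$, together with the cancellation/non-overlap ensuring multiplicity exactly one — is the crux.

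An alternative, and probably cleaner, route avoids re-deriving the branching rule: combine the comparison already available. By Theorem \ref{th:reg} and (\ref{sum-reg-fun})--(\ref{eq:char}), $\bigoplus_{i=1}^4 R(\calO,\psi_i) = \mathrm{Ind}^{\tu K}_{C_{\tu K}(\calO)^0}(Triv)$ decomposes into the four families listed in Proposition \ref{p:regfun1}. On the other side, summing (\ref{eq:charfla}) over $i=0,1,2,3$ (i.e. over a set of representatives for $W/W(D_p\times D_p)$ acting on the relevant orbit, equivalently using that the four $F_1(\rho/2)\otimes F_2(-w_i\rho/2)$ exhaust $\mathrm{Ind}^{\wtu K}_{\wti U'}$ for the appropriate bigger group), one gets $\bigoplus_{i=1}^4 \oX(\rho/2,-w_i\rho/2)|_{\tu K} = \mathrm{Ind}^{\tu K}_{\wti U}[\text{(all four inducing modules summed)}]$, and the summed inducing module is $\mathrm{Ind}^{\wti U \cdot Z}_{\wti U}(Triv)$-type object matching $\mathrm{Ind}^{C_{\tu K}(\calO)}_{C_{\tu K}(\calO)^0}(Triv)$. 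Thus $\sum_i \oX(\rho/2,-w_i\rho/2)|_{\tu K} = \sum_i R(\calO,\psi_i)$ as $\tu K$-modules; it then remains only to match the four summands individually, which is forced by the disjoint integrality/parity conditions ($a_i\in\bbZ$ vs. $\bbZ+1/2$, and the four residues of $\sum a_i$), since these conditions already separate the four $R(\calO,\psi_i)$ and are visibly respected by each $\oX(\rho/2,-w_i\rho/2)$ (the integrality of $a_i$ is read off from whether the representation is genuine, and the mod-$2$ value of $\sum a_i$ from the action of the relevant order-two central elements of $\tu K$, computed directly on the lowest $\tu K$-type $\mu_i$). The main obstacle in either approach is the same: controlling the $\Spin(2p)\times\Spin(2p)\downarrow\wti U$ passage and the parity/multiplicity-one bookkeeping carefully enough to pin down each of the four families exactly rather than merely up to the ambiguity of their union; I would lean on Lemma \ref{pinrep} and the fine structure in \cite{McG} to handle this.
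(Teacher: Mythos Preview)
Your two approaches both miss the paper's key idea, and your Approach~2 has a genuine gap. The paper does \emph{not} attack the branching problem $\tu K\downarrow\wti U$ directly. Instead it makes the substitution $(\tu K,\wti U)\leadsto (G^{split},K^{split})$, replacing the complex pair by the real pair $\big(SO(2p,2p),\,S[O(2p)\times O(2p)]\big)$ in cases (i)--(ii) and its Spin analogue in (iii)--(iv). Under this identification, computing $\mathrm{Ind}^{\tu K}_{\wti U}[\,\cdot\,]$ becomes the problem of finding all finite-dimensional representations of the split real group $G^{split}$ whose restriction to $K^{split}$ contains the given $\wti U$-type. Now every finite-dimensional representation of $G^{split}$ is the Langlands quotient of a principal series $X(\delta,\nu)$, and by Vogan's theory of fine $K$-types (cited as \cite{V}) the lowest $K^{split}$-types of principal series are exactly the fine $K$-types, each of which is a single $W$-orbit of fine $\delta$'s. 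This structural fact immediately gives multiplicity one \emph{and} the four-way partition by $W$-orbits of $\delta$, which is the parity data you were trying to extract by hand.

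Your Approach~1 could in principle succeed, but the multiplicity-one step---which you correctly flag as ``the crux''---is exactly what the fine-$K$-type machinery delivers for free, and you have no concrete mechanism for it beyond citing \cite{McG}. Your Approach~2 has a real gap: the assertion that the sum of the four inducing $\wti U$-modules ``matches $\mathrm{Ind}^{C_{\tu K}(\calO)}_{C_{\tu K}(\calO)^0}(Triv)$'' is unsupported, since $\wti U$ and $C_{\tu K}(\calO)$ are unrelated subgroups of $\tu K$ and there is no induction-in-stages relation between them. Even granting the total-sum identity, the separation into four pieces would require knowing that each $\oX(\rho/2,-w_i\rho/2)|_{\tu K}$ is already supported in a single parity class---but that is precisely the multiplicity statement you are trying to prove, so the argument is circular at that point.
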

\begin{proof}
In the first two cases we can substitute $\big({G}^{split},K^{split}):=\big(SO(2p,2p),S[O(2p)\times O(2p)])\big)$ for 
$\big(\wtu{K},\wti U\big),$ and $\big(Spin(2p,2p),Spin(2p)\times Spin(2p)/\{\pm (I,I)\}\big)$   for the last two cases.
The problem of computing
the $\wtu{K}$-structure of $\ovl{X}$ reduces to finding the finite
dimensional representations of $\wtu{G}^{split}$ which contain factors of
$F(\rho/2)\otimes F(-w_i\rho/2).$ 
Any finite dimensional
representation of $\wtu{G}^{split}$ is a Langlands quotient of a principal
series. Principal series have fine lowest $K$-types (see \cite{V}). Let
${M}A$ be a split Cartan subgroup of $\wtu{G}^{split}.$  A principal series is
parametrized by a $(\delta,\nu)\in\wht{M}A.$ The $\delta$ are called
fine, and each fine ${K^{split}}$-type $\mu$ is a direct sum of a Weyl group
orbit of a $fine$ $\delta.$ This implies
that the multiplicities in (\ref{ind-fine1}) are all one, and all the
finite dimensional representations  occur in
$(i),(ii),(iii),(iv)$. The  four formulas correspond to the various orbits of the $\delta.$  
\end{proof}

\subsubsection*{Case 2: $\calO=[3\ 2^{2k}\ 1^{2n-4k-3}]$, $0\le k \le p-1$}

Recall that $$\la_{\calO}= (  k+\frac{1}{2}, \dots,\frac{3}{2}, \frac{1}{2}\mid n-k-2,\dots, 1, 0),$$ and the integral system is $D_k\times D_{n-k}.$
 The irreducible modules
are of the form $\ovl{X}(\la_L,-w\la_R)$ such that $\la_{\calO}$ is
dominant, $w_i\la_{\calO}$ is antidominant for $D_{k}\times D_{n-k},$ and they
factor to $SO(2n,\bb C).$ These representations are listed in \ref{ss:rep}.

\subsubsection*{} We need to work with  the real form $\big(SO(r,s), S[O(r)\times O(s)]\big)$. 
A representation of $O(n)$, $r=2m+\eta$ with $\eta=0$ or $1$, will be denoted by $V(a_1,\dots,a_m; \ep)$,  with $\ep=\pm 1,1/2$ according to Weyl's convention, and $a_1\ge a_2\ge \dots\ge a_m\ge 0.$ If $a_m=0,$ there
are two inequivalent representations with this highest weight, one for $\ep=1$, one for $\ep=-1.$  Each restricts irreducibly to $SO(r)$ as the representation $V(a_1,\dots,a_m)\in \widehat{SO(r)}$. 
 When $a_m\neq 0$, there
is a unique representation with this highest weight, $\ep=1/2$ or
$\ep$ is suppressed altogether. The restriction of this representation to $SO(r)$ is a sum of two representations $V(a_1,\dots,a_m)$ and $V(a_1,\dots,a_{m-1},-a_m)$. 

Representations of $Pin(s)$ are parametrized in the same way, with $a_1\ge \dots \ge a_m\ge 0$ allowed to be nonnegative decreasing half-integers. 

Representations of $S[O(r)\times O(s)]$ are
parametrized by restrictions of $V(a;\ep_1)\boxtimes V(b; \ep_2)$ with
the following equivalences:
\begin{enumerate}
\item If one of $\ep_i=\frac{1}{2}$, say, $\ep_1 =\frac{1}{2}$,
then $V(a;\ep_1)\boxtimes V(b;\ep_2)=V(a'; \delta_1)\boxtimes V(b' ;
\delta_2)$ if and only if $a=a', b=b', \ep _1=\delta _1, \ep_2= \delta_2$. 
\item If $\ep_1, \ep_2, \delta_1, \delta_2\in \{\pm 1\}$, then
$V(a;\ep_1)\boxtimes V(b;\ep_2)=V(a'; \delta_1)\boxtimes V(b' ;
\delta_2)$ iff $a=a', b=b', \ep_1\ep_2=\delta_1\delta_2$. 
\end{enumerate}


\begin{lemma} \label{pin-rep-1}
Let $PIN =V(\frac{1}{2}\dots, \frac{1}{2}) \in \widehat{Pin(s)}, s=2m+\eta$ with $\eta=0$ or 1. Then 
\begin{equation}
PIN\otimes PIN =\sum _{\ell=0} ^{m-1}  V(\underset{k}{\underbrace{1\dots 1}},
\underset{m-\ell}{\underbrace{0\dots 0}} ; \ep ) + V(1,\dots,1;1/2),
\end{equation}
where the sum in over $\ep=1$ and $-1$.
\begin{proof}
Omitted.
\end{proof}
 \end{lemma}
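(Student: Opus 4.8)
The plan is to identify $PIN\otimes PIN$ with (a part of) the exterior algebra $\Lambda^\bullet(\bbC^s)$ and then to read off the $O(s)$-constituents of the latter. I would do the first step purely on characters over a maximal torus of $Spin(s)$, where it is essentially a one-line identity. Write a torus element so that its eigenvalues on $\bbC^s$ are $e^{\pm\theta_1},\dots,e^{\pm\theta_m}$ (together with $1$ when $s=2m+1$); then $\mathrm{ch}(PIN)=\prod_{i=1}^m\bigl(e^{\theta_i/2}+e^{-\theta_i/2}\bigr)$, so that
$$\mathrm{ch}(PIN\otimes PIN)=\prod_{i=1}^m\bigl(e^{\theta_i/2}+e^{-\theta_i/2}\bigr)^2=\prod_{i=1}^m\bigl(1+e^{\theta_i}\bigr)\bigl(1+e^{-\theta_i}\bigr),$$
using the elementary identity $(z^{1/2}+z^{-1/2})^2=(1+z)(1+z^{-1})$. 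The right-hand side is exactly $\sum_{j}\mathrm{ch}\bigl(\Lambda^j\bbC^{2m}\bigr)$ when $s=2m$, and equals $\tfrac12\sum_j\mathrm{ch}\bigl(\Lambda^j\bbC^{2m+1}\bigr)=\sum_{j\ \mathrm{even}}\mathrm{ch}\bigl(\Lambda^j\bbC^{2m+1}\bigr)$ when $s=2m+1$. Hence $PIN\otimes PIN\cong\bigoplus_{j}\Lambda^j(\bbC^s)$ as a representation of $Pin(s)$ that factors through $O(s)$ — all exterior degrees if $s$ is even, the even (equivalently, via $\otimes\det$, the odd) ones if $s$ is odd.

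The second step is the classical decomposition of the $\Lambda^j$ as $O(s)$-modules. For $0\le j\le m-1$ the space $\Lambda^j\bbC^s$ is $O(s)$-irreducible with highest weight $1^j0^{m-j}$; the Hodge-type isomorphism $\Lambda^{s-j}\cong\Lambda^j\otimes\det$ shows that $\Lambda^j$ and $\Lambda^{s-j}$ are the two inequivalent $O(s)$-extensions of one $SO(s)$-module, i.e. the representations $V(1^j0^{m-j};+1)$ and $V(1^j0^{m-j};-1)$ in the notation of \ref{ss:notation}. When $s=2m$ the middle term $\Lambda^m\bbC^{2m}=\Lambda^m_+\oplus\Lambda^m_-$, with $SO(2m)$-highest weights $1^m$ and $1^{m-1}(-1)$ exchanged by $O(2m)/SO(2m)$, and by the convention recalled just before the lemma this direct sum is the single $O(2m)$-module $V(1^m;1/2)$. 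Assembling these identifications and matching the degree $j$ with $s-j$ yields
$$PIN\otimes PIN=\sum_{\ell=0}^{m-1}\Bigl(V(1^\ell0^{m-\ell};+1)+V(1^\ell0^{m-\ell};-1)\Bigr)+V(1^m;1/2)$$
in the even case (the $\ell=0$ term being the trivial plus the determinant character), with the analogous — but slightly reindexed — list of one constituent from each $\pm$-pair in the odd case.

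An alternative that is shorter to write but logically equivalent is to bootstrap from Lemma \ref{pinrep}: the restriction of $PIN$ to $Spin(s)$ is $SPIN_+\oplus SPIN_-$ (for $s=2m$; twice the spin module for $s$ odd), so $PIN\otimes PIN$ restricted to $Spin(s)$ is built from $SPIN_+^{\otimes2}$, $SPIN_+\otimes SPIN_-$ and $SPIN_-^{\otimes2}$, whose $Spin(s)$-decompositions are given by Lemma \ref{pinrep} (the $SPIN_-$ cases following by applying the outer automorphism); one then checks which $Spin(s)$-constituents fuse into a single $Pin(s)$-module. The only genuine work — and the place I expect to have to be careful — is the bookkeeping in the second step: keeping the labels $\ep\in\{\pm1,1/2\}$ straight, checking that it is $\Lambda^j$ rather than $\Lambda^{s-j}$ that carries the label $+1$ under the chosen orientation, and stating the even-$s$ and odd-$s$ answers uniformly in the $V(a_1,\dots,a_m;\ep)$ notation; the representation-theoretic content itself reduces to the two displayed character identities and the standard irreducibility of $\Lambda^j\bbC^s$.
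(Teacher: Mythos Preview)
The paper's own proof is simply ``Omitted,'' so there is nothing to compare against directly. Your argument via the character identity
\[
\mathrm{ch}(PIN)^2=\prod_{i=1}^{m}(1+e^{\theta_i})(1+e^{-\theta_i})=\mathrm{ch}\,\Lambda^\bullet(\bbC^{s})\quad(\text{resp.}\ \tfrac12\,\mathrm{ch}\,\Lambda^\bullet(\bbC^{s})\ \text{for }s\text{ odd})
\]
followed by the classical $O(s)$-decomposition of $\Lambda^\bullet(\bbC^s)$ is the standard route and is correct; your alternative via Lemma~\ref{pinrep} and the outer automorphism is equally valid.

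One substantive observation you make deserves to be highlighted: in the odd case $s=2m+1$ you say one gets ``one constituent from each $\pm$-pair,'' which is \emph{not} what the displayed formula in the lemma asserts (it sums over both $\ep=\pm1$). You are right and the lemma as literally stated is not: already for $s=3$ the formula would give total dimension $1+1+3=5$, whereas $\dim(PIN\otimes PIN)=4$. The even case $s=2m$ checks out on dimensions, $2\sum_{\ell=0}^{m-1}\binom{2m}{\ell}+\binom{2m}{m}=2^{2m}$, and your argument there is complete once the $\ep$-labels are pinned down. Since the paper only invokes this lemma with $s$ even (both factors of $U=S[O(2k)\times O(2n-2k)]$ in Case~2 are even-dimensional), the odd-$s$ discrepancy does not affect any of the subsequent results.
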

 \subsubsection*{}
 We will use the groups $U=S[O(2k)\times O(2n-2k)]\subset K=SO(2n)$. 
Again, the representations that we want are in \ref{ss:rep}. 
 As before,
\begin{equation}
  \label{eq:cf2}
\ovl{X}(\la_{\calO}, -w_i\la_\calO)=\sum_{w\in W(D_{k}\times D_{n-k})}  {\ep(w)}X(\la_\calO, - ww_i\la_\calO).  
\end{equation}
{Restricting to $K,$
and using Frobenius reciprocity, (\ref{eq:cf2}) implies
\begin{equation} \label{ind-K-U-2}
\ovl{X}(\la_\calO, - w_i\la_\calO)\mid_{K }=\text{Ind}_{U }^{K}
[F_1(\la_{\calO}) {\otimes} F_2(-w_i\la_\calO)].
\end{equation}
}

The terms $[F_1(\la_\calO)\otimes F_2(-w_i\la_\calO)]$ are 
\begin{description}
\item[(i)] $V(1/2,\dots ,1/2)\otimes V(0,\dots ,0)\boxtimes V(1/2,\dots
  ,1/2)\otimes V(0,\dots ,0)$,
\item[(ii)] $V(1/2,\dots ,1/2,-1/2)\otimes V(0,\dots ,0)\boxtimes V(1/2,\dots ,
  1/2,-1/2)\otimes V(0,\dots ,0)$.
\end{description}

\
\begin{lemma}
\begin{equation}\label{ind-fine6}
\begin{aligned}
\oX (\la_\calO, -\la_\calO) =\text{Ind} ^{K} _{U} \Big [ & \sum \limits _{0\le 2\ell \le k }
V(\underset{2\ell}{\underbrace{1,\dots,1}},0,\dots,0 ;1 )\boxtimes
V(0,\dots,0;1)\\ 
\\ &+ \sum \limits _{0\le 2\ell \le k}
V(\underset{2\ell}{\underbrace{1,\dots,1}},0,\dots,0 ;1 )\boxtimes
V(0,\dots,0;-1)\Big],\\ 
\oX (\la_\calO, -w_1\la_\calO) =\text{Ind} ^{K} _U  \Big [ & \sum \limits _{0\le 2\ell+1 \le k }
V(\underset{2l+1}{\underbrace{1,\dots,1}},0,\dots,0 ;1 )\boxtimes
V(0,\dots,0;1)  
\\ &+ \sum \limits _{0\le 2\ell+1 \le k}
V(\underset{2\ell+1}{\underbrace{1,\dots,1}},0,\dots,0 ;1 )\boxtimes
V(0,\dots,0;-1) \Big]. 
\end{aligned}
\end{equation}
\end{lemma}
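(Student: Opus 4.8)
The plan is to run the same chain of reductions as in Case~1 (the steps culminating in Proposition~\ref{p:n2p}), with the integral system $D_k\times D_{n-k}$ now playing the role of $D_p\times D_p$. First I would invoke the virtual character identity (\ref{eq:cf2}) — legitimate because $W(\la_\calO)=W(D_k\times D_{n-k})$ — restrict it to $K=SO(2n)$, and apply Frobenius reciprocity to land at (\ref{ind-K-U-2}): $\oX(\la_\calO,-w_i\la_\calO)\mid_K=\text{Ind}_U^K[F_1(\la_\calO)\otimes F_2(-w_i\la_\calO)]$ with $U=S[O(2k)\times O(2n-2k)]$. The first genuine step is to identify the Levi representation explicitly, for $w_i\in\{1,w_1\}$: reading off $\la_\calO$, its $D_k$-block is the infinitesimal character of a half-spin representation of $SO(2k)$, while its $D_{n-k}$-block equals $\rho(D_{n-k})$, hence corresponds to the trivial representation of $SO(2n-2k)$. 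Consequently the bracketed term restricts to $SO(2k)\times SO(2n-2k)$ as the external product of a tensor product of two half-spin representations on the $SO(2k)$-factor with the trivial representation on the $SO(2n-2k)$-factor, the two spin factors being of the same type when $w_i=1$ and of opposite type when $w_i=w_1$; these are the terms (i) and (ii).

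The second step is the tensor-product decomposition. Applying Lemma~\ref{pin-rep-1} (equivalently Lemma~\ref{pinrep} with $p$ replaced by $k$), $SPIN_+\otimes SPIN_+$ contributes exactly the $V(1,\dots,1,0,\dots,0)$ with an even number of $1$'s and $SPIN_+\otimes SPIN_-$ exactly those with an odd number, which is the parity constraint appearing in (\ref{ind-fine6}). One then promotes each $SO(2k)$-constituent $V(1,\dots,1,0,\dots,0)$ to the $O(2k)$-representation carrying the label $\ep=1$, and the trivial $SO(2n-2k)$-representation to $V(0,\dots,0;1)\oplus V(0,\dots,0;-1)$ inside $O(2n-2k)$. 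Using the equivalences (1)--(2) for representations of $S[O(2k)\times O(2n-2k)]$ one checks that the $\ep=-1$ extension on the $O(2k)$-factor reproduces the same $U$-representations (the sign simply migrates across the external product), so nothing is double-counted; assembling the pieces yields exactly the two sums in (\ref{ind-fine6}), and multiplicities remain bounded by one via the fine-$K$-type argument used in Case~1.

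The character step and Frobenius reciprocity are routine given Case~1, and the spin-times-spin decomposition is quoted from Lemma~\ref{pin-rep-1}, so the part I expect to be the main obstacle is the $O$/$SO$/$Pin$ bookkeeping: correctly fixing the $\ep$-labels ($\pm1$ versus $1/2$) on the half-spin and trivial constituents, and verifying that under $S[O(2k)\times O(2n-2k)]\hookrightarrow SO(2n)$ the various $\ep$-combinations collapse precisely onto the two listed sums, consistently with the parametrization conventions (1)--(2) fixed earlier in the subsection. Getting this accounting exactly right — not off by a determinant twist and not picking up a spurious multiplicity — is where the care is required.
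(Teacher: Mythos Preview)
Your proposal is correct and follows the same route as the paper: the paper's entire proof is the single line ``This follows from Lemma~\ref{pin-rep-1},'' and your argument is precisely an unpacking of that line, identifying the inducing data via (\ref{eq:cf2})--(\ref{ind-K-U-2}) and items (i)--(ii) (which the paper has already recorded before the lemma) and then invoking the $PIN\otimes PIN$ decomposition together with the $S[O\times O]$ equivalences (1)--(2) to produce the two displayed sums. Your remark that the two half-spin factors are of the same type for $w_i=1$ and of opposite type for $w_i=w_1$ is what is actually needed to obtain the even/odd parity split in (\ref{ind-fine6}); the paper's listed item (ii) (and the formula for $w_1\la_\calO$ in \S\ref{ss:rep}) appear to contain sign typos, so your reading is the correct one rather than a deviation.
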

\begin{proof}
This follows from Lemma  \ref{pin-rep-1}. 
\end{proof}
\begin{prop}
 \begin{equation}\label{eq:ktypeskn}
 \begin{aligned}
& \oX (\la_\calO,-\la_\calO)|_{\tu{K}} = \bigoplus V(a_1,\dots,a_{k}, 0,\dots ,0),\quad  \text{ with } \ a_i\in \bb Z, \ \sum a_i\in 2\bbZ\\
 &\oX (\la_\calO,  -w_1\la_\calO   )|_{\tu{K}} =  \bigoplus  V(a_1,\dots,a_{k},0,\dots ,0),\quad \text{ with } \  a_i\in \bb Z.\ \sum a_i\in 2\bbZ +1.
 \end{aligned}
   \end{equation}  
\end{prop}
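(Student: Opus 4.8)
The plan is to deduce the final Proposition from the two preceding lemmas by performing the induction $\text{Ind}_U^K$ explicitly, exactly in parallel with the proof of Proposition \ref{p:n2p} in Case 1. Recall that $U = S[O(2k)\times O(2n-2k)]$ inside $K = SO(2n)$, and that the formula \eqref{ind-fine6} expresses $\oX(\la_\calO,-w_i\la_\calO)\mid_{\tu K}$ as an induced representation from $U$ of an explicit sum of $U$-types of the form $V(1^{2\ell},0,\dots;1)\boxtimes V(0,\dots;\pm 1)$ (respectively $V(1^{2\ell+1},0,\dots;1)\boxtimes V(0,\dots;\pm 1)$). So the first step is to compute $\text{Ind}_{S[O(2k)\times O(2n-2k)]}^{SO(2n)}$ of each such $U$-type and identify the resulting $K$-types.

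First I would reduce the branching to a classical statement. As in the proof of Proposition \ref{p:n2p}, the cleanest route is to replace the pair $(K,U) = (SO(2n), S[O(2k)\times O(2n-2k)])$ by the symmetric pair $\big(SO(2k,2n-2k), S[O(2k)\times O(2n-2k)]\big)$ and use that every finite-dimensional representation of the split group $SO(2k,2n-2k)$ is a Langlands quotient of a principal series with a fine lowest $K$-type (see \cite{V}); each fine $K$-type is a single Weyl-group orbit of a fine $M$-character, which forces the induction multiplicities to be $1$ and pins down exactly which $K$-types occur. Concretely, the $U$-types appearing in \eqref{ind-fine6} — those built from $\text{SPIN}\otimes\text{SPIN}$ on the $O(2k)$ factor tensored with the trivial $\pm1$ representations on the $O(2n-2k)$ factor — are precisely the fine $U$-types, and their $\text{Ind}_U^K$ sweeps out all $V(a_1,\dots,a_n)$ with $a_i\in\bb Z$ and with the supplementary vanishing $a_{k+1}=\dots=a_n=0$ coming from the fact that the $O(2n-2k)$ factor carries only the one-dimensional characters $\pm1$. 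The two cases (i) and (ii) correspond to the two $W$-orbits of fine characters: the $\boxtimes V(0;1)$ versus $\boxtimes V(0;-1)$ pieces recombine, and the parity of the number of $1$'s in $V(1^{2\ell},0,\dots)$ versus $V(1^{2\ell+1},0,\dots)$ produces the constraint $\sum a_i\in 2\bbZ$ in case (i) and $\sum a_i\in 2\bbZ+1$ in case (ii).

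The second step is the bookkeeping of the parity/sign constraint. I would track how the $O$-type signs $\ep_1,\ep_2$ multiply under restriction to $SO$ (using the equivalences (1)–(2) listed before Lemma \ref{pin-rep-1}): a $K$-type $V(a_1,\dots,a_n)$ of $SO(2n)$ with $a_n=0$ lifts to two $O(2n)$-types $V(a;\pm1)$, and it occurs in $\text{Ind}_U^K$ of $V(\cdot;\ep_1)\boxtimes V(\cdot;\ep_2)$ exactly when $\ep_1\ep_2$ matches the appropriate sign; since in \eqref{ind-fine6} the $O(2n-2k)$ factor ranges over both $\ep_2=\pm1$, every allowed $SO(2n)$-type with $a_{k+1}=\dots=a_n=0$ is hit once, while the number of leading $1$'s (even vs.\ odd) dictates whether $\sum a_i$ is even or odd. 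This gives exactly the two formulas in \eqref{eq:ktypeskn}. Finally, since the objects in \ref{ss:rep} factor through $SO(2n,\bbC)$, the restriction to $\tu K = Spin(2n)$ is the inflation of the $SO(2n)$-computation, so no genuine ($\tu K$-only) types intervene, and the statement is as claimed.

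The main obstacle I expect is the sign/parity bookkeeping in passing from $O$-types to $SO$-types and showing that the recombination of the $\ep_2 = \pm 1$ summands yields each $SO(2n)$-type exactly once with the correct $\sum a_i$ parity — in particular justifying that the multiplicities are all $1$. That is precisely where the "fine $K$-type" argument (each fine $K^{\mathrm{split}}$-type is a single Weyl orbit of a fine $M$-character, and principal series with fine $\delta$ have that lowest $K$-type with multiplicity one) does the real work; everything else is a direct transcription of the Case 1 argument with $SO(2p,2p)$ replaced by $SO(2k,2n-2k)$.
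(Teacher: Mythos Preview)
Your overall strategy---replace $(K,U)$ by the real form $\big(SO(2k,2n-2k),\,S[O(2k)\times O(2n-2k)]\big)$ and invoke the fine $K$-type theory as in Proposition~\ref{p:n2p}---is the same as the paper's, and the parity bookkeeping you outline is exactly what is needed at the end.  However, there is one genuine gap.  You call $SO(2k,2n-2k)$ ``the split group'' and appeal directly to the fact that finite-dimensional representations are quotients of minimal principal series with fine lowest $K$-types; but for $2k<n$ this group is \emph{not} split (its real rank is $2k$, not $n$), and the fine $K$-type argument of Proposition~\ref{p:n2p} does not transcribe verbatim.  In a non-split form the minimal parabolic has a nontrivial compact factor $SO(2n-4k)$ sitting in $M$, and not every finite-dimensional representation arises as a Langlands quotient with a fine lowest $K$-type in the sense you need.

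The paper handles this by an extra reduction you do not mention: one passes to a $\theta$-stable parabolic $\fk q=\fk l+\fk u$ determined by $\xi=(0,\dots,0;\underbrace{1,\dots,1}_{n-2k-2},0,\dots,0)$, whose Levi component is $S[O(2k)\times O(2k+2)]$.  The $K$-types in \eqref{ind-fine6} then have $\mu_L=\mu-2\rho(\fk u\cap\fk s)$ equal to fine $(U\cap L)$-types, and a bottom-layer argument reduces the multiplicity-one and exhaustion claims to the quasisplit case $SO(2k,2k+2)$, where the fine $K$-type machinery does apply.  Your proposal is correct once this reduction is inserted; without it, the step ``multiplicities are all $1$ and the induction sweeps out exactly the $V(a_1,\dots,a_k,0,\dots,0)$'' is asserted but not justified.
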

\begin{proof}
The proof is almost identical to that of Proposition \ref{p:n2p}. 
When $k=p-1$, 
 the group $\wtu{G^{split}}$ in the proof of Proposition \ref{p:n2p} is replaced by
$G^{qs}=SO(2p,2p+2)$ and $\wtu{U}$ is replaced by $U=S[O(2p)\times O(2p+2)].$  
When $k<p-1$,
the group $\wtu{G^{split}}$ is replaced by
$G^{k,n-k}=SO(2k,2n-2k)$ and $\wtu{U}$ is replaced by $U= S[O(2k)\times O(2n-2k)].$
We follow \cite{V}. 
The $K$-types $\mu$ in (\ref{ind-fine6}) have $\fk q(\la_L)$ 
the $\theta$-stable parabolic $\fk
q=\fk l+\fk u$ determined by $\xi=(0,\dots
,0;\underset{n-2k-2}{\underbrace{1,\dots ,1}},0\dots ,0)$. The Levi
component is $S[O(2k)\times O(2k+2)].$ The resulting
$\mu_L=\mu-2\rho(\fk u\cap\fk s)$  are fine $U\cap L$-types. A bottom layer argument
reduces  the proof to the quasisplit case $n=2p+1$.   
\end{proof}

\subsubsection*{Cases 3,4}
We use the infinitesimal characters in \ref{s:infchar} and the representations are from \ref{ss:rep} again.

In Case 4, $\calO=[2^{2k}\ 1^{2n-4k}]$ with $k<p$. There is a unique irreducible representation with
associated support $\calO$, and it is spherical. It is a special
unipotent representation with character given by \cite{BV}. 

When {$n=2p$ and $k=p$}, there are two nilpotent orbits $\calO_{I,II} = [2^n]_{I,II}$. 
The representations $\Xi_{I,II}$ in \ref{ss:rep} are 
 spherical representations, one each for $\calO_{I,II}$  that are not genuine. 
The two representations are induced irreducibly from the
trivial representation of the parabolic subgroups with Levi components
$GL(n)_{I,II}.$ 
On the other hand, the representations $\Xi'_{I,II}$  are induced irreducibly from the
character $Det ^{1/2}$ of the parabolic subgroups with Levi components
$GL(n)_{I,II}.$ 
All of these are unitary.

\begin{prop}
  \label{p:kstruct3}  
  The $\tu K$-types of these representations are:
  \begin{description}
   \item[Case 3] $\calO_{I,II}=[2^{2p}]_{I,II}:\ $ 
   \begin{equation}
   \begin{aligned}
   \Xi_I |_{\tu{K}} & =& \bigoplus V(a_1,a_1,a_3,a_3,\dots,a_{n-1},a_{n-1}) & \text{ with } a_i\in \bbZ, \\
      \Xi '_I |_{\tu{K}} & =& \bigoplus V(a_1,a_1,a_3,a_3,\dots,a_{n-1},a_{n-1}) & \text{ with } a_i\in \bbZ +1/2,\\
\Xi_{II}|_{\tu{K}} & =& \bigoplus V(a_1,a_1,a_3,a_3,\dots,a_{n-1},-a_{n-1}) & \text{ with } a_i\in \bbZ, \\
      \Xi '_{II}|_{\tu{K}} & =& \bigoplus V(a_1,a_1,a_3,a_3,\dots,a_{n-1},-a_{n-1}) & \text{ with } a_i\in \bbZ +1/2,
   \end{aligned}
   \end{equation}
satisfying $a_1\ge a_3\ge \dots \ge a_{n-1}\ge 0$
\medskip

  \item[Case 4] $\calO=[2^{2k}\ 1^{2n-4k}],\  0\le k< n/2:\ $ 
  $$\Xi | _{\tu{K}}= \bigoplus V(a_1,a_1,\dots,
    a_k,a_k,0,\dots ,0), \ \text{ with } \ a_i\in \bbZ,$$
 satisfying $a_1\ge a_3\ge \dots \ge a_k\ge 0.$
  \end{description}

\begin{proof}
These are well known. 
The cases $[2^{n}]_{I,II}$ follow by Helgason's theorem since
$(D_{n},A_{n-1})$ is a symmetric pair (for the real form
$SO^*(2n)$). They also follow by the method
outlined below for the other cases.  

For $2k<n,$ the methods outlined in \cite{BP2} combined with \cite{B} give the
answer; the representations are $\Theta$-lifts of the trivial
representation of $Sp(2k,\bbC).$ More precisely $\oX(\la_{\calO},-\la_\calO)$
is $\Omega/[\mathfrak{sp}(2k,\bbC)\Omega]$ where $\Omega$ is the oscillator
representation for the pair $O(2n,\bbC)\times Sp(2k,\bbC)$. 
The $K$-structure can then be computed using seesaw
pairs, namely $\Omega$ is also the oscillator representation for the
pair  $O(2n)\otimes Sp(4k,\bbR)$.  
\end{proof}
\end{prop}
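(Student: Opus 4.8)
There are two orbit families to handle: $\calO_{I,II}=[2^n]_{I,II}$ (Case~3, with the four modules $\Xi_I,\Xi'_I,\Xi_{II},\Xi'_{II}$) and $\calO=[2^{2k}\,1^{2n-4k}]$ with $2k<n$ (Case~4, with the single module $\Xi$). In both situations each of these modules is manufactured by an induction procedure from one--dimensional data, so the plan is the same as for Cases~1 and~2: realize the module concretely, restrict to $\tu K=Spin(2n)$ by Frobenius reciprocity, decompose the resulting induced module, and compare with Proposition~\ref{p:regfun3}. For Case~3 this goes through Helgason's theorem; for Case~4, through theta lifting.

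\textbf{Case 3.} As recorded in Section~3 for Case~3, $[2^n]_I$ and $[2^n]_{II}$ are the Richardson orbits of the two non--conjugate maximal parabolic subalgebras of $\frakso(2n,\bbC)$ with Levi factor $\frakgl(n,\bbC)$. Accordingly $\Xi_{I,II}$ is induced irreducibly from the trivial character, and $\Xi'_{I,II}$ from $\Det^{1/2}$, of the corresponding parabolic subgroup of $\widetilde G=Spin(2n,\bbC)$ viewed as a real group; its Levi factor is a cover of $GL(n,\bbC)$ whose maximal compact is (a cover of) $U(n)$. Since $\widetilde G$ is complex, compact Frobenius reciprocity identifies $\Xi_{I}\mid_{\tu K}$ with the $Spin(2n)$--module induced from the trivial character of $U(n)$, and $\Xi'_{I}\mid_{\tu K}$ with the one induced from $\Det^{1/2}$; the module $\Xi_{II}$ (resp.\ $\Xi'_{II}$) uses the other conjugacy class of $U(n)$. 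Now $\Ind_{U(n)}^{Spin(2n)}(\mathrm{triv})$ is the space of regular functions on the compact symmetric space $Spin(2n)/U(n)$, and since $(D_n,A_{n-1})$ is a symmetric pair, Helgason's theorem gives its (multiplicity--free) decomposition $\bigoplus V(a_1,a_1,a_3,a_3,\dots,a_{n-1},a_{n-1})$ with $a_i\in\bbZ$ and $a_1\ge a_3\ge\cdots\ge a_{n-1}\ge0$. The second conjugacy class of $U(n)$ introduces the sign on the last coordinate distinguishing $I$ from $II$, and twisting by $\Det^{1/2}$ moves all the $a_i$ into $\bbZ+\tfrac12$. This reproduces the four lists of Proposition~\ref{p:regfun3}, Case~3.

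\textbf{Case 4.} Here $\Xi=\oX(\la_\calO,-\la_\calO)$ is the special unipotent (hence spherical) module attached to $\calO$, and by \cite{B} and \cite{BP2} it is the theta lift of the trivial representation of $Sp(2k,\bbC)$; concretely $\Xi\cong\Omega/[\fraksp(2k,\bbC)\,\Omega]$, where $\Omega$ is the oscillator representation of the complex dual pair $\bigl(O(2n,\bbC),Sp(2k,\bbC)\bigr)$. To compute $\Xi\mid_{\tu K}$ I would exploit the seesaw in which the same space $\Omega$ carries the oscillator representation of the real pair $\bigl(O(2n),Sp(4k,\bbR)\bigr)$, with $O(2n)$ compact and $Sp(2k,\bbC)\subset Sp(4k,\bbR)$. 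From the multiplicity--free decomposition $\Omega=\bigoplus_\mu V(\mu)^{O(2n)}\boxtimes\sigma(\mu)^{Sp(4k,\bbR)}$, passing to $\fraksp(2k,\bbC)$--coinvariants shows that the multiplicity of $V(\mu)$ in $\Xi\mid_{O(2n)}$ equals $\dim\bigl(\sigma(\mu)\bigr)_{\fraksp(2k,\bbC)}$, which vanishes unless the highest weight of $\sigma(\mu)$ is ``doubled'' and is one--dimensional otherwise; this forces $\mu=(a_1,a_1,\dots,a_k,a_k,0,\dots,0)$ with $a_1\ge\cdots\ge a_k\ge0$, each occurring once. That is exactly the asserted $\tu K$--structure, and it agrees with Proposition~\ref{p:regfun3}, Case~4. (One could instead run the Weyl--character--formula and Frobenius argument of Cases~1--2 directly, now at the singular integral infinitesimal character $\la_\calO$ and reducing to a single fine $U$--type for $U=S[O(2k)\times O(2n-2k)]$; this is workable but more delicate because $\la_\calO$ is non--regular.)

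\textbf{Main obstacle.} The step carrying the real content --- and the one needing genuine care --- is the branching computation that selects which $\tu K$--types survive: in Case~4, identifying for which $\mu$ the lowest--weight $Sp(4k,\bbR)$--module $\sigma(\mu)$ appearing in $\Omega$ has nonzero $Sp(2k,\bbC)$--coinvariants (the ``doubled weight'' condition, a Littlewood--type stable branching statement); and in Case~3, the $Pin/Spin$ and $\Det^{1/2}$ bookkeeping that separates $\Xi_I,\Xi'_I,\Xi_{II},\Xi'_{II}$ and fixes the parity of $\sum a_i$. Everything else --- Richardson induction, compact Frobenius reciprocity, Helgason's theorem, and the classical $O(2n)\times Sp(4k,\bbR)$ decomposition of the oscillator representation --- is standard or already established in the earlier sections.
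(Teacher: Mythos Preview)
Your proposal is correct and follows essentially the same route as the paper: Helgason's theorem for the symmetric pair $(D_n,A_{n-1})$ in Case~3, and the theta lift from the trivial representation of $Sp(2k,\bbC)$ together with the seesaw pair $\bigl(O(2n),Sp(4k,\bbR)\bigr)$ in Case~4. You in fact supply more detail than the paper does (the explicit $\Ind_{U(n)}^{Spin(2n)}$ step and the coinvariant computation on $\sigma(\mu)$); the only minor slip is in your closing paragraph, where you refer to a parity condition on $\sum a_i$ in Case~3 --- the actual distinction there is $a_i\in\bbZ$ versus $a_i\in\bbZ+\tfrac12$, with no further parity constraint.
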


\subsection{}
We resume the notation used in Section 3. Let $(G_0, K) = (Spin (2n,\bbC), Spin(2n,\bbC))$.
 By comparing Propositions \ref{p:regfun1}, \ref{p:regfun2}, \ref{p:regfun3}
 and the $K$-structure of representations listed in this section, we have 
 the following matchup.
 
 \begin{description}
\item[Case 1] $\Xi_i |_{ K} = R(\calO,\psi_i), \  1\le i \le 4$;
\item[Case 2] $\Xi_i |_{ K} = R(\calO,\psi_i), \  i=1,2$;
\item[Case 3] $\Xi_I |_{ K} =R(\calO_I, Triv)$,  $\Xi ' _I |_{ K} =R(\calO_I, Sgn)$, \\ \hspace*{2.5em} $\Xi_{II}|_{ K} =R(\calO_{II}, Triv)$, $\Xi '_{II} |_{ K} =R(\calO_{II}, Sgn)$;  
\item[Case 4] $\Xi |_{ K} = R(\calO , Triv)$.
\end{description}

Then the following theorem follows.

\begin{theorem}\label{t:main}
Attain the notation above. Let $ G_0={Spin}(2n,\bbC)$ be viewed as a real group. The $K$-structure 
of each representations in $\calU_{G_0}(\calO,\la_{\calO})$ is calculated explicitly and matches the 
$K$-structure of the $R(\calO,\psi)$ with $\psi \in \widehat{A_{ K}(\calO)}$.That is, there is a 1-1 correspondence $\psi\in
   \widehat { A_{{K}}(\calO)}\longleftrightarrow \Xi(\calO,\psi)\in
  \calU_{ G_0}(\calO,\la_\calO)$ satisfying 
$$
\Xi(\calO,\psi)\mid_{ K}\cong R(\calO,\psi).
$$
\end{theorem}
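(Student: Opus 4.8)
The plan is to assemble Theorem~\ref{t:main} purely as a bookkeeping result: all the genuine analytic and combinatorial content has already been extracted in the two parallel computations of Sections~3 and~4, and what remains is to verify, case by case, that the two lists of $K$-decompositions coincide and that the correspondence $\psi \leftrightarrow \Xi(\calO,\psi)$ is a bijection. So the first step is simply to recall the two sides. On the ``regular function'' side, Propositions~\ref{p:regfun1}, \ref{p:regfun2} and~\ref{p:regfun3} give, for each of the four cases, an explicit decomposition
$$
R(\tu\calO,\mathrm{Triv}) \;=\; \sum_{\psi\in\widehat{A_K(\calO)}} R(\calO,\psi),
$$
together with the $K$-type content of each summand $R(\calO,\psi)$ (parametrized by highest weights $V(a_1,\dots,a_n)$ subject to integrality/parity constraints depending on $\psi$). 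On the ``unipotent representation'' side, Propositions~\ref{p:n2p}, \ref{eq:ktypeskn} (the proposition containing that equation) and~\ref{p:kstruct3} give the $\tu K$-type content of each $\Xi_i$ (resp.\ $\Xi_I,\Xi_{II},\Xi'_I,\Xi'_{II}$, resp.\ $\Xi$) defined in Section~\ref{ss:rep}, and the Remark in Section~\ref{ss:rep} records that these are exactly the members of $\calU_{\tu G}(\calO,\la_\calO)$.

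The second step is the actual matchup, which is the content of the display list preceding the theorem in the excerpt. For Case~1 one compares Proposition~\ref{p:regfun1} with Proposition~\ref{p:n2p}: the four constraint pairs ($a_i\in\bbZ$, $\sum a_i$ even; $a_i\in\bbZ$, $\sum a_i$ odd; $a_i\in\bbZ+\tfrac12$, $\sum a_i\equiv p$; $a_i\in\bbZ+\tfrac12$, $\sum a_i\equiv p+1$) appear verbatim on both sides, so $\Xi_i|_K\cong R(\calO,\psi_i)$ for $i=1,\dots,4$. Case~2 is identical with two constraints instead of four, comparing Proposition~\ref{p:regfun2} with the proposition around~\eqref{eq:ktypeskn}. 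Cases~3 and~4 compare Proposition~\ref{p:regfun3} with Proposition~\ref{p:kstruct3}: here one must also check the labelling of the characters, i.e.\ that $\psi_1=\mathrm{Triv}$ matches $\Xi_{I}$ (resp.\ $\Xi_{II}$) and $\psi_2=\mathrm{Sgn}$ matches $\Xi'_I$ (resp.\ $\Xi'_{II}$), which is visible from the half-integrality of the $a_i$; in Case~4 both sides are a single summand. Once the four identifications of $K$-modules are in hand, bijectivity of $\psi\mapsto\Xi(\calO,\psi)$ is immediate: the number of $\psi\in\widehat{A_K(\calO)}$ equals $|A_K(\calO)|$, which by Proposition~\ref{p:regfun1}--\ref{p:regfun3} (equivalently by the component-group computation, Corollary~\ref{c:cgp}) equals $4,2,2,1$ in the four cases, matching exactly the number of representations listed in $\calU_{\tu G}(\calO,\la_\calO)$, and the $K$-decompositions are pairwise distinct (distinct parity constraints), so the assignment is forced and injective, hence bijective.

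The one point that is not pure bookkeeping, and which I expect to be the main (mild) obstacle, is the translation between the two notational frameworks. Sections~3 works with $K\cong \tu G_0 = \Spin(2n,\bbC)$ acting by the adjoint representation on $\fk s\cong\fk g_0$, whereas Section~4 sets up Langlands parameters for $\tu G=\Spin(2n,\bbC)$ viewed as a real group with $\tu K=\Spin(2n)$, and in between it temporarily uses $SO(2n,\bbC)$ and passes to split or quasisplit real forms $SO(2p,2p)$, $SO(2p,2p+2)$, $SO^*(2n)$ to compute $K$-types via fine $K$-types and $\theta$-stable data. So before the matchup one must confirm that ``$K$'' in Propositions~\ref{p:regfun1}--\ref{p:regfun3} and ``$\tu K$'' in Propositions~\ref{p:n2p}, \eqref{eq:ktypeskn}, \ref{p:kstruct3} are the same group with the same parametrization of $\widehat{\tu K}$ by highest weights $V(a_1,\dots,a_n)$, $a_1\ge\dots\ge|a_n|$, all $a_i\in\bbZ$ or all in $\bbZ+\tfrac12$ — which is exactly what the sentence ``We resume the notation used in Section~3. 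Let $(G_0,K)=(\Spin(2n,\bbC),\Spin(2n,\bbC))$'' is doing. Having made that identification explicit, the proof is just: cite Propositions~\ref{p:regfun1}, \ref{p:regfun2}, \ref{p:regfun3}; cite Propositions~\ref{p:n2p}, the proposition containing~\eqref{eq:ktypeskn}, and~\ref{p:kstruct3}; observe the four (resp.\ two, two, one) decompositions agree term by term; and invoke the Remark in Section~\ref{ss:rep} identifying the $\Xi$'s with $\calU_{G_0}(\calO,\la_\calO)$. I would write this up as a short paragraph of ``compare Proposition X with Proposition Y'' statements rather than reproducing the weight lists again.
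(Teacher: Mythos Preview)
Your proposal is correct and follows essentially the same approach as the paper: the theorem is stated immediately after the display list of matchups and the sentence ``Then the following theorem follows,'' so the paper's proof is precisely the case-by-case comparison of Propositions~\ref{p:regfun1}, \ref{p:regfun2}, \ref{p:regfun3} with Propositions~\ref{p:n2p}, the proposition containing~(\ref{eq:ktypeskn}), and~\ref{p:kstruct3}. Your additional remarks on bijectivity and on the notational reconciliation between Sections~3 and~4 are more explicit than what the paper writes, but they are exactly the content behind the one-line justification.
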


\section{Clifford algebras and Spin groups} \label{ss:clifford}
Since the main interest is in the case of $Spin(V),$ the simply
connected groups of type $D,$ we realize everything in the context of
the Clifford algebra.  

\subsection{} Let $(V,Q)$ be a quadratic space of even dimension $2n$, with a basis
$\{e_i,f_i\}$ with $1\le i\le n,$ satisfying $Q(e_i,f_j)=\delta_{ij},$
$Q(e_i,e_j)=Q(f_i,f_j)=0$. Occasionally we will replace $e_j,f_j$ by
two orthogonal vectors $v_j,w_j$ satisfying  
$Q(v_j,v_j)=Q(w_j,w_j)=1,$ and orthogonal to the $e_i,f_i$ for $i\ne
j.$  Precisely they will satisfy $v_j=(e_j+f_j)/\sqrt{2}$ and 
$w_j=(e_j-f_j)/(i\sqrt{2})$ (where $i:=\sqrt{-1},$ not an index). Let
$C(V)$ be the Clifford algebra with automorphisms $\al$ defined by 
$\al(x_1\cdots x_r)=(-1)^r x_1\cdots x_r$ and $\star$ given by $(x_1\cdots
x_r)^\star=(-1)^r x_r\cdots x_1, $ subject to the relation
$xy+yx=2Q(x,y)$ for $x,y\in V$. The double cover of $O(V)$ is  
$$
Pin(V):=\{ x\in C(V)\ \mid\ x\cdot x^\star=1,\ \al(x)Vx^\star\subset V\}.
$$
The double cover $Spin(V)$ of $SO(V)$ is given by the elements in $Pin(V)$ which are in $C(V)^{even},$ \ie  $\disp{Spin(V):=Pin(V)\cap C(V)^{even}}.$
For $Spin,$ $\al$ can be suppressed from the notation since it is the identity.

\smallskip
The action of $Pin(V)$ on $V$ is given by $\rho(x)v=\al(x)vx^*.$ The
element $-I\in SO(V)$ is covered by 
\begin{equation}
\label{eq:-spin}
\pm \Ep_{2n}=\pm i^{n-1}vw\prod_{1\le j\le n-1} [1-e_jf_j]=\pm
i^n\prod_{1\le j\le n} [1-e_jf_j].  
\end{equation}
These elements satisfy 
$$
\Ep_{2n}^2=
\begin{cases}
  +Id &\text{ if } n\in 2\bb Z,\\
  -Id&\text { otherwise.}
\end{cases}
$$
The center of $Spin(V)$ is 
$$
Z(Spin(V))=\{\pm I, \pm\Ep_{2n}\}\cong
\begin{cases}
\bb Z_2\times \bb
Z_2 &\text{ if }n \text{ is even,}\\ 
\bb Z_4 &\text{ if } n \text{ is odd}.
\end{cases}
$$
The Lie algebra of $Pin(V)$ as well as $Spin(V)$ is formed of elements
of even order $\le 2$ satisfying
$$
x+x^\star=0.
$$ 
The adjoint action is {$\ad x(y)=xy-yx$}. A
Cartan subalgebra and the root vectors corresponding to the usual
basis in Weyl normal form are  formed of the elements 

\begin{equation}
\begin{aligned}
\label{eq:liea}
&(1-e_if_i)/2&&\longleftrightarrow &&H(\ep_i)\\
&e_ie_j/2&&\longleftrightarrow &&X(-\ep_i-\ep_j),\\
&e_if_j/2&&\longleftrightarrow &&X(-\ep_i+\ep_j),\\
&{f_i} f_j/2&&\longleftrightarrow &&X(\ep_i+\ep_j).
\end{aligned}
\end{equation}

\subsection{Nilpotent Orbits} 
 We write $\tu{K}=Spin(V)=Spin (2n,\bbC) $, $K=SO(V)=SO(2n,\bbC)$.  A nilpotent orbit of an element $e$  will have Jordan blocks denoted by
\begin{equation}
\label{eq:blocks}
\begin{aligned}
&e_1\longrightarrow e_2\longrightarrow\dots \longrightarrow
e_k\longrightarrow v\longrightarrow -f_k\longrightarrow
f_{k-1}\longrightarrow {-f_{k-2} \longrightarrow}  \dots \longrightarrow \pm f_1\longrightarrow 0\\
&\begin{matrix}
&e_1\longrightarrow &e_2&\longrightarrow&\dots &\longrightarrow
&e_{2\ell}\longrightarrow 0\\
&f_{2\ell}\longrightarrow &-f_{2\ell-1}&\longrightarrow&\dots &\longrightarrow
&-f_1\longrightarrow 0
\end{matrix} 
\end{aligned} 
\end{equation}
with the conventions about the $e_i,f_j,v$ as before. There is an even
number of odd sized blocks, and any two blocks of equal odd size $2k+1$
can be replaced by a pair of   blocks of the form as the even ones.  A realization of the odd block is given by $ \displaystyle\frac{1}{2}\left ( {\sum \limits _{i=1} ^{k-1} }e_{i+1}f_i +vf_k\right ),$ and a realization of the even blocks by $\dpfr \left( {\sum \limits _{i} ^{2l-1}}e_{i+1}f_{i}\right).$ When there are only even blocks, there are two orbits; one block of the form 
$\big(\sum_{1\le i< \ell-1} e_{i+1}f_{i}+e_\ell f _{\ell-1}\big)/2$ is replaced by  $\big(\sum_{1\le i< \ell-1} e_{i+1}f_{i}+ f_\ell f_{\ell-1}\big)/2.$ 

\medskip
The centralizer of $e$ in $\fk{so}(V)$  has Levi component isomorphic to a product 
of $\fk{so}(r_{2k+1})$ and $\fk{sp}(2r_{2\ell})$ where $r_j$ is the number of
blocks of size $j.$ The centralizer of $e$ in $SO(V)$ has Levi
component $\prod Sp(2r_{2\ell})\times S[\prod O(r_{2k+1})]$. 
For each odd sized block define
\begin{equation}
  \label{eq:epsilon}
\Ep_{2k+1}=i^{k}v\prod (1-e_jf_j).  
\end{equation}
This is an element in $Pin(V),$ and acts by $-Id$ on the block. Even
products of $\pm \Ep_{2k+1}$  belong to $Spin(V),$ and represent the
connected components of $C_{\wti{K}}(e).$   
\begin{prop}
Let $m$ be the number of distinct odd blocks. Then 
$$A_K(\calO) \cong  \begin{cases} \bbZ _2 ^{m-1} & \mbox{ if }  m >  0 \\ 1 & \mbox{ if } m=0.  \end{cases} $$ Furthermore, 
\begin{enumerate}

\item If $E$ has an odd block of size $2k+1$ with $r_{2k+1}>1,$ then
$A_{\wti{K}}(\calO)\cong A_K(\calO).$

\item If all $r_{2k+1}\le 1,$ then there is an exact sequence
$$
1\longrightarrow \{\pm I\}\longrightarrow
A_{\wti{K}}(\calO)\longrightarrow A_K(\calO)\longrightarrow 
0.
$$
\end{enumerate}
\end{prop}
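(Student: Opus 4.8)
The plan is to work entirely inside the Clifford algebra realization set up in Section \ref{ss:clifford}, using the description of the connected components of $C_{\wti K}(e)$ in terms of even products of the elements $\Ep_{2k+1}$ defined in \eqref{eq:epsilon}. First I would record the structural facts: the centralizer $C_K(e)\subset SO(V)$ has reductive part $\prod Sp(2r_{2\ell})\times S[\prod O(r_{2k+1})]$, so its component group is generated by the images of the diagonal reflections, one for each odd block size; modding out by the $SO$ (rather than $O$) constraint kills exactly one product, giving $A_K(\calO)\cong \bbZ_2^{m-1}$ when $m>0$ and trivial when $m=0$ (this is the standard computation, \cf \cite{CM}). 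The nontrivial content is comparing this with $A_{\wti K}(\calO)$ upstairs.

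Next I would analyze the preimage of $C_K(e)$ in $\wti K=Spin(V)$. The key is that each generator of $A_K(\calO)$ coming from an odd block of size $2k+1$ with multiplicity $r_{2k+1}$ lifts to the element $\Ep_{2k+1}$ (or a product of such from two distinct odd blocks), and one must decide (a) whether $\Ep_{2k+1}$ lies in $C_{\wti K}(e)$ — it does, since it acts by $-\Id$ on its block and fixes the complementary blocks, and $e$ is a sum of block contributions each preserved up to the ambiguity — and (b) what the square of the lift is, i.e. whether $\Ep_{2k+1}^2=\pm I$. From \eqref{eq:epsilon}, $\Ep_{2k+1}=i^k v\prod(1-e_jf_j)$, and one computes its square using the Clifford relations: $v^2=1$, $(1-e_jf_j)^2 = 1-2e_jf_j$ (using $e_jf_je_jf_j = e_j(f_je_j)f_j=e_j(2-e_jf_j)f_j=2e_jf_j - e_j^2f_j^2 = 2e_jf_j$ since $e_j^2=0$)… wait, one must be careful: $(1-e_jf_j)^2 = 1 - 2e_jf_j + e_jf_je_jf_j = 1-2e_jf_j+2e_jf_j=1$ actually, let me not grind this — the point is this is a short Clifford-algebra calculation giving a sign depending on $k$ and on whether the block count forces $\Ep$ to be even-degree, i.e. in $Spin$. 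I would then split into the two cases exactly as stated.

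For case (1), if some odd block size $2k+1$ occurs with $r_{2k+1}>1$, then one has \emph{two} copies of such a block, and the element $\Ep_{2k+1}^{(1)}\Ep_{2k+1}^{(2)}$ (product over two distinct blocks of equal odd size) is an even-degree element lying in $Spin(V)$, with square $+I$, representing a nontrivial component; more importantly $-I$ itself is realized \emph{inside} the connected centralizer $C_{\wti K}(e)^0$ — because the block can be rotated within the $Sp$ or $O$ factor, or because $-I\in Z(\wti K)$ acts as a product of block-wise $-\Id$'s that is connected to the identity through the larger centralizer when a block is repeated. Hence the covering $\{\pm I\}\to C_{\wti K}(e)\to C_K(e)$ restricts to an isomorphism on component groups: $A_{\wti K}(\calO)\cong A_K(\calO)$. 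For case (2), if all $r_{2k+1}\le 1$, then no such repetition exists, $-I$ is \emph{not} in $C_{\wti K}(e)^0$ (its block-wise description is rigid), and the kernel $\{\pm I\}$ of $\wti K\to K$ injects into $A_{\wti K}(\calO)$; the exact sequence $1\to\{\pm I\}\to A_{\wti K}(\calO)\to A_K(\calO)\to 0$ is then just the component-group long exact sequence for the central extension, noting $C_{\wti K}(e)\to C_K(e)$ is surjective (every element of $C_K(e)$ lifts, possibly up to sign).

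The main obstacle I anticipate is part (b) above together with the delicate point in case (2): precisely establishing that $-I\notin C_{\wti K}(e)^0$ when all odd blocks are distinct, which requires showing that the component group upstairs genuinely doubles rather than that $-I$ becomes connected to $1$ through the $Sp$ factors attached to even blocks. This needs a careful look at which block-wise sign patterns are realized by $C_{\wti K}(e)^0$ — equivalently, computing $\pi_0$ of the preimage of $S[\prod O(r_{2k+1})]$ in $Pin$/$Spin$ and checking that the diagonal element $-I$ is not in the identity component unless some $O(r)$ factor with $r\ge 2$ contributes a connecting path, which happens exactly when some odd block repeats. I would handle this by an explicit count of connected components of $C_{\wti K}(e)$ via the $\Ep$-generators and their relations (each $\Ep_{2k+1}^2\in\{\pm I\}$, and products $\Ep_{2k+1}\Ep_{2k'+1}$ landing in $Spin$ precisely in even-degree combinations), which is the computation referenced as being carried out in this section and summarized in Corollary \ref{c:cgp}.
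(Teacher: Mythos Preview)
Your approach is essentially the paper's: work in the Clifford realization, use the $\Ep_{2k+1}$ as component-group representatives, and decide in each case whether $-I$ lies in $C_{\wti K}(e)^0$. The one place where the paper is sharper than your sketch is case~(1): rather than appealing vaguely to ``rotation within the $O$-factor,'' the paper writes two equal-size odd blocks in the paired form $e=(e_2f_1+\dots+e_{2k+1}f_{2k})/2$, observes that the Lie-algebra element $\sum_{j=1}^{2k+1}(1-e_jf_j)$ commutes with $e$, and exponentiates to get the explicit path
\[
\theta\ \longmapsto\ \prod_{j=1}^{2k+1}\big[\cos(\theta/2)+i\sin(\theta/2)(1-e_jf_j)\big]
\]
inside $C_{\wti K}(e)$, which runs from $I$ at $\theta=0$ to $-I$ at $\theta=2\pi$. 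For case~(2) with no odd blocks the paper simply notes that $\prod Sp(2r_{2\ell})$ is simply connected, so its preimage in $Spin$ is $\prod Sp\times\{\pm I\}$; with all odd blocks distinct it records the relations $\Ep_{2k+1}\Ep_{2\ell+1}=-\Ep_{2\ell+1}\Ep_{2k+1}$ ($k\neq\ell$) and $\Ep_{2k+1}^2=(-1)^kI$, from which the component group is read off directly --- so your plan to compute $\pi_0$ via the $\Ep$-generators and their relations is exactly what is done.
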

\begin{proof}

Assume that there is an ${r_{2k+1}>1}.$ Let 
$$
\begin{matrix}
&e_1&\rightarrow    &\dots &\rightarrow &e_{2k+1}&\rightarrow 0\\
&f_{2k+1}&\rightarrow&\dots &\rightarrow &-f_1   &\rightarrow 0
\end{matrix}
$$   
be two of the blocks. In the Clifford algebra this element is
$e=(e_2f_1+\dots +e_{2k+1}f_{2k})/2.$ The element ${\sum \limits _{ j=1} ^{2k+1}}(1-e_{j}f_{j})$ in the Lie
algebra commutes with $e$. So its exponential
\begin{equation}\label{path1}
\prod \exp\big( i\theta(1-e_{j}f_{j})/2\big)=
\prod [\cos\theta/2 + i\sin\theta/2 (1-e_{j}f_{j})]
\end{equation}
also commutes with $e.$  {At $\theta=0$, 
the element in (\ref{path1})  is $I$; at $\theta =2\pi$, it is $-I$.} Thus $-I$ is in the connected component of the identity of
$A_{\wti{K}}(\calO)$ (when $r_{2k+1}>1$), and therefore $A_{\wti{K}}(\calO)=A_K(\calO).$  

\medskip
Assume there are no blocks of odd size. Then $C_K(\calO) {\cong \prod Sp (r_{2l})}$ is simply
connected, so $C_{\wti{K}}(\calO)\cong C_K(\calO)\times\{\pm I\}.$  { Therefore $A_{\tu{K}} (\calO) \cong \bbZ _2$.}

\medskip
Assume there are {$m$ distinct odd blocks with $m\in 2\bbZ _{>0}$ and $r_{2k_1+1}=\cdots =r_{2k_m +1}=1.$ In this case, $C_K(\calO)\cong \prod Sp(r_{2l}) \times S[  \underset{m}{\underbrace{O(1) \times \cdots \times O(1)}} ]$
, and hence $A_{\tu{K}} (\calO)\cong \bbZ_2^{m-1}$. 
Even products of $\{ \pm \Ep _{2k_j +1} \}$ are representatives of elements in $A_{\tu{K} }(\calO)$.} They satisfy  

$$
\Ep_{2k+1}\cdot\Ep_{2\ell+1}=
\begin{cases}
-\Ep_{2\ell +1}\cdot\Ep_{2k+1}  &k\ne \ell,\\
{ (-1)^kI}  &k=\ell.
\end{cases}
$$  
\end{proof}

\begin{cor} \ 
\label{c:cgp}
  \begin{enumerate}
  \item If $\calO=[3\ 2^{n-2}\ 1],$ then {$A_{\wti{K}}(\calO)\cong
\bb Z_2\times\bb Z_2=\{\pm\Ep_3\cdot\Ep_1,\pm
I\}$}. 
\item If $\calO=[3\ 2^{2k}\ 1^{2n-4k-3}]$ with  $2n-4k-3>1,$ then  $A_{\wti{K}}(\calO)\cong
\bb Z_2.$
\item If  $\calO=[2^{n}]_{I,II}$ ($n$ even), then $A_{\wti{K}} (\calO) \cong\bb Z_2.$ 
\item If $\calO=[2^{2k}\ 1^{2n-4k}]$ with $2k<n,$ then
  $A_{\wti{K}} (\calO) \cong 1.$ 
  \end{enumerate}
In all cases  $C_{\wti K}(\calO)=Z(\wti K)\cdot C_{\wti K}(\calO)^0.$ 

\end{cor}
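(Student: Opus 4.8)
The plan is to deduce the Corollary from the preceding Proposition by running through the four families of orbits, supplying in Case~1 the one extra computation (with the elements $\Ep_{2k+1}$) that a cardinality count does not give, and then verifying the last assertion by locating central elements in each component of $C_{\wti{K}}(\calO)$.

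First I would record the odd Jordan blocks in each case and cite the relevant clause of the Proposition. In Case~4, $\calO=[2^{2k}\ 1^{2n-4k}]$ with $2k<n$ has a single odd block size, namely $1$, of multiplicity $r_1=2n-4k>1$; clause~(1) of the Proposition gives $A_{\wti{K}}(\calO)\cong A_K(\calO)$, and with $m=1$ distinct odd block the formula $A_K(\calO)\cong\bbZ_2^{m-1}$ gives the trivial group. In Case~2, $\calO=[3\ 2^{2k}\ 1^{2n-4k-3}]$ with $2n-4k-3>1$ has odd blocks of sizes $3$ and $1$, with $r_3=1$ and $r_1=2n-4k-3>1$; again $r_1>1$, so clause~(1) applies and $m=2$ yields $A_{\wti{K}}(\calO)\cong\bbZ_2$. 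In Case~3, $\calO=[2^n]_{I,II}$ has no odd blocks, so $m=0$, $A_K(\calO)=1$, and the ``no odd blocks'' argument inside the proof of the Proposition (which gives $C_{\wti{K}}(\calO)\cong C_K(\calO)\times\{\pm I\}$) yields $A_{\wti{K}}(\calO)\cong\bbZ_2$.

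The case requiring more than a count is Case~1, $\calO=[3\ 2^{n-2}\ 1]$ with $n=2p$: here $r_3=r_1=1$, so clause~(2) of the Proposition gives the extension $1\to\{\pm I\}\to A_{\wti{K}}(\calO)\to A_K(\calO)\to 0$ with $A_K(\calO)\cong\bbZ_2$, hence $|A_{\wti{K}}(\calO)|=4$, and the group is generated by $-I$ together with the even product $\Ep_3\Ep_1$ (a coset representative as in the proof of the Proposition). To distinguish $\bbZ_2\times\bbZ_2$ from $\bbZ_4$ I would feed the parameters $k=1$, $\ell=0$ into the commutation rules from that proof, getting $\Ep_3^2=-I$, $\Ep_1^2=I$, $\Ep_3\Ep_1=-\Ep_1\Ep_3$, whence $(\Ep_3\Ep_1)^2=-\Ep_3^2\Ep_1^2=I$. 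Since $\Ep_3\Ep_1$ maps to the nontrivial class of $A_K(\calO)\cong\bbZ_2$ (it is the product of reflections in the size-$3$ and size-$1$ blocks), it is neither $I$ nor $-I$, so $\{\pm I,\pm\Ep_3\Ep_1\}$ are four distinct, pairwise commuting elements of order $\le 2$; as they must surject onto the order-$4$ group $A_{\wti{K}}(\calO)$, this identifies $A_{\wti{K}}(\calO)\cong\bbZ_2\times\bbZ_2=\{\pm I,\pm\Ep_3\Ep_1\}$.

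Finally I would prove $C_{\wti{K}}(\calO)=Z(\wti{K})\cdot C_{\wti{K}}(\calO)^0$, equivalently that $Z(\wti{K})=\{\pm I,\pm\Ep_{2n}\}$ surjects onto $A_{\wti{K}}(\calO)$. The identity component always contains $I$, so Case~4 needs nothing further, and in Case~3 the nontrivial component contains $-I\in Z(\wti{K})$. For Cases~1 and~2 the key point is that $-I\in SO(2n,\bbC)$ is \emph{not} in $C_K(\calO)^0$: it acts on the (unique) size-$3$ block with determinant $(-1)^3=-1$, hence has nontrivial image in the $O(r_3)=O(1)$ factor of the Levi of $C_K(\calO)$, and so in $A_K(\calO)$. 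Since $\pm\Ep_{2n}$ cover $-I\in SO(2n,\bbC)$, they lie outside $C_{\wti{K}}(\calO)^0$; in Case~2 this already exhibits $\Ep_{2n}$ as a representative of the nontrivial class of $A_{\wti{K}}(\calO)\cong\bbZ_2$. In Case~1 I would add that $-I\in\wti{K}$ also lies outside $C_{\wti{K}}(\calO)^0$ (clause~(2) of the Proposition), so $I$, $-I$, $\Ep_{2n}$, $-\Ep_{2n}=(-I)\Ep_{2n}$ represent four distinct cosets and hence exhaust $A_{\wti{K}}(\calO)$. I expect this last part to be the main obstacle: one must match the central elements correctly against the two or four components, which means tracking the Clifford-algebra signs in $\Ep_{2n}$ and in $\Ep_3\Ep_1$ (not merely their images in $SO(2n,\bbC)$); everything else is a direct appeal to the Proposition.
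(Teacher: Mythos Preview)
Your proposal is correct and follows exactly the approach the paper intends: the Corollary is stated without proof, as an immediate consequence of the preceding Proposition, and you have supplied precisely the case-by-case application of that Proposition together with the one genuine computation (the square of $\Ep_3\Ep_1$ via the relations $\Ep_{2k+1}^2=(-1)^kI$ and $\Ep_3\Ep_1=-\Ep_1\Ep_3$) needed to pin down $\bbZ_2\times\bbZ_2$ versus $\bbZ_4$ in Case~1. Your verification of the final assertion $C_{\wti K}(\calO)=Z(\wti K)\cdot C_{\wti K}(\calO)^0$, by checking that $-I\in SO(2n)$ has nontrivial image in the $O(1)$ factor attached to the size-$3$ block and hence that $\pm\Ep_{2n}$ hit the nonidentity component(s), is more detail than the paper itself provides but is the natural argument.
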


\newpage

\end{document}